\newcommand{\Real}{\mathbb{R}}
\newcommand{\Natural}{\mathbb{N}}
\newcommand{\scalarPoly}[1]{\mathcal{P}^{#1}}
\newcommand{\vectPoly}[1]{\boldsymbol{\mathcal{P}}^{#1}}
\newcommand{\scalarSobolev}[3]{W^{{#1},{#2}}({#3})}
\newcommand{\vectSobolev}[3]{\boldsymbol{W}^{{#1},{#2}}({#3})}
\newcommand{\Hdiv}[1]{\boldsymbol{H}(\operatorname{div};#1)}
\newcommand{\Vh}[1]{\boldsymbol{V}_{h}^{#1}}
\newcommand{\Zh}[1]{\boldsymbol{Z}_{h}^{#1}}
\newcommand{\Qh}[1]{Q_h^{#1}}
\newcommand{\tF}{t_{\rm F}}
\newcommand{\elements}[1]{\mathcal{T}_{#1}}
\newcommand{\faces}[1]{\mathcal{F}_{#1}}
\newcommand{\Th}{\elements{h}}
\newcommand{\Thd}{\elements{h}^{{\rm d}, n}}
\newcommand{\Thc}{\elements{h}^{{\rm a}, n}}
\newcommand{\Fh}{\faces{h}}
\newcommand{\Fhd}{\faces{h}^{{\rm d}, n}}
\newcommand{\Fhc}{\faces{h}^{{\rm a}, n}}
\newcommand{\Fhi}{\faces{h}^{\rm i}}
\newcommand{\TT}{\elements{T}}
\newcommand{\TF}{\elements{F}}
\newcommand{\FT}{\faces{T}}
\newcommand{\normal}{\boldsymbol{n}}
\newcommand{\normalF}{\boldsymbol{n}_F}
\newcommand{\tstep}{\Delta t}
\newcommand{\RT}{\boldsymbol{\mathcal{RT}}}
\newcommand{\BDM}{\boldsymbol{\mathcal{BDM}}}
\newcommand{\NE}{\boldsymbol{\mathcal{N}}}
\newcommand{\jump}[1]{\llbracket #1\rrbracket}
\newcommand{\avg}[1]{\{\!\!\{ #1 \}\!\!\}}
\newcommand{\rF}{\boldsymbol{r}_F^k}
\newcommand{\Rh}{\boldsymbol{R}_h^{k}}
\newcommand{\Gh}{\boldsymbol{G}_h^{k}}
\newcommand{\projRT}[1]{\boldsymbol{I}_{\RT,h}^{#1}}
\newcommand{\projRTEle}[1]{\boldsymbol{I}_{\RT,T}^{#1}}
\newcommand{\projL}[1]{\boldsymbol{\pi}_{0,h}^{#1}}
\newcommand{\projLEle}[1]{\boldsymbol{\pi}_{0,T}^{#1}}
\newcommand{\restr}[2]{#1|_{#2}}
\newcommand{\norm}[2]{\|#2\|_{#1}}
\newcommand{\seminorm}[2]{|#2|_{#1}}
\newcommand{\Norm}[2]{\left\|#2\right\|_{#1}}
\newcommand{\rbracket}[1]{\left( #1 \right)}
\newcommand{\cbracket}[1]{\left\{ #1 \right\}}
\newcommand{\timeTerm}[1]{\mathcal{T}^{#1}}
\newcommand{\diffTerm}[1]{\mathcal{D}^{#1}}
\newcommand{\convTerm}[1]{\mathcal{C}^{#1}}
\newcommand{\upwTerm}[1]{\mathcal{J}^{#1}}
\newcommand{\timeErr}{\mathcal{E}_{\mathcal{T}}^n}
\newcommand{\diffErr}{\mathcal{E}_{\mathcal{D}}^n}
\newcommand{\convErr}{\mathcal{E}_{\mathcal{C}}^n}
\newcommand{\upwErr}{\mathcal{E}_{\mathcal{J}}^n}
\newcommand{\Err}{\mathcal{E}^n}
\newcommand{\st}{\, : \,}
\newcommand{\email}[1]{\href{mailto:#1}{#1}}
\newtheorem{theorem}{Theorem}
\newtheorem{proposition}[theorem]{Proposition}
\newtheorem{lemma}[theorem]{Lemma}
\theoremstyle{remark}
\newtheorem{remark}[theorem]{Remark}
\theoremstyle{definition}
\newtheorem{assumption}[theorem]{Assumption}
\begin{document}

\title{A Reynolds-semi-robust $\boldsymbol{H}(\operatorname{div})$-conforming method for unsteady incompressible power-law flows}

\author[1,2]{Louren\c{c}o Beir\~{a}o da Veiga}
\affil[1]{Dipartimento di Matematica e Applicazioni, Universit\`{a} di Milano Bicocca,
  \email{lourenco.beirao@unimib.it}, \ \email{k.haile@campus.unimib.it}}
\affil[2]{IMATI-PV, CNR, Pavia, Italy}
\author[3]{Daniele A. Di Pietro}
\affil[3]{IMAG, Univ. Montpellier, CNRS, Montpellier, France, \email{daniele.di-pietro@umontpellier.fr}}
\author[1]{Kirubell B. Haile}
\date{}
\maketitle

\begin{abstract}
  In this work, we prove what appear to be the first Reynolds-semi-robust and pressure-robust velocity error estimates for an $\boldsymbol{H}(\operatorname{div})$-conforming approximation of unsteady incompressible flows of power-law type fluids.
  The proposed method hinges on a discontinuous Galerkin approximation of the viscous term and a reinforced upwind-type stabilization of the convective term.
  The derived velocity error estimates account for pre-asymptotic orders of convergence observed in convection-dominated flows through regime-dependent estimates of the error contributions.
  A complete set of numerical results validate the theoretical findings.
  \smallskip\\
  \textbf{Keywords:}
  Navier–Stokes equations,
  non-Newtonian fluids,
  $\boldsymbol{H}(\operatorname{div})$-conforming methods,
  Reynolds-semi-robust estimates,
  pre-asymptotic convergence rates
  \smallskip\\
  \textbf{MSC2010 classification:} 76A05, 76D05, 65N30, 65N08, 65N12
\end{abstract}

\section{Introduction}\label{sec:Intro}

In this work we prove what appear to be the first Reynolds-semi-robust and pressure-robust error estimates for an $\boldsymbol{H}(\operatorname{div})$-conforming finite element approximation of the $p$-Navier--Stokes equations.

Fluids with nonlinear rheologies are encountered in several fields, ranging from geosciences \cite{Isaac.Stadler.ea:15,Ahlkrona.Elfverson:21,Schubert.Turcotte.ea:01} to chemical engineering \cite{Ko.Pustejovska.ea:18} and biomechanics \cite{Lai.Kuei.ea:78,Galdi.Rannacher.ea:08}.
The mathematical study of the corresponding system of equations is considered, e.g., in \cite{Ladyzhenskaya:69,Malek.Rajagopal:05,Ruzicka.Diening:07,Diening.Ettwein:08,Beirao-da-Veiga:09,Berselli.Ruzicka:20}.
Several methods have also been developed for its numerical solution.
Finite element methods for creeping flows of non-Newtonian fluids have been considered in \cite{Barrett.Liu:94,Hirn:13,Belenki.Berselli.ea:12}.
Both finite difference and finite element methods for the full non-Newtonian Navier--Stokes equations are considered in the monograph \cite{Crochet.Davies.ea:12}; see also~\cite{Crochet.Walters:83}.
Other contributions worth mentioning here include:
\cite{Diening.Kreuzer.ea:13,Kreuzer.Suli:16} on finite element methods with implicit power-law-like rheologies;
\cite{Ko.Pustejovska.ea:18,Ko.Suli:18} on generalized Newtonian fluids with space variable and concentration-dependent power-law index;
\cite{Kroner.Ruzicka.ea:14} as well as the paper series~\cite{Kaltenbach.Ruzicka:23,Kaltenbach.Ruzicka:23*1,Kaltenbach.Ruzicka:23*2} concerning various types of discontinuous Galerkin approximations;
\cite{Janecka.Malek.ea:19} on transient flows of non-Newtonian fluids.
\smallskip

The numerical solution of the $p$-Navier--Stokes problem involves several challenges.
The first is obviously related to the non-linearity of the viscous term.
The discretization considered here is inspired by discontinuous Galerkin techniques; see, e.g., \cite{Burman.Ern:08,Del-Pezzo.Lombardi.ea:12,Diening.Koner.ea:14,Malkmus.Ruzicka.ea:18}.
We also borrow ideas from the analysis of the Hybrid High-Order approximations of the $p$-Navier-Stokes considered in \cite{Botti.Castanon-Quiroz.ea:21,Castanon-Quiroz.Di-Pietro.ea:23}, based on the approach developed in \cite{Di-Pietro.Droniou:17,Di-Pietro.Droniou:17*1} for the scalar case, and from the theoretical developments for the VEM method in \cite{antonietti2024virtual}.

A second challenge is related to robustness in the convection-dominated regime.
It is classically known that, in this case, non-dissipative approximations of the convection term can lead to a loss of stability (see for instance \cite{Girault.Raviart:86,John:16,john2018finite}).
Remedies to this problem include, but are not limited to:
the streamline upwind Petrov--Galerkin method and its variants \cite{Franca.Freym:82,Brooks.Hughes:82,Tobiska.Verfurt:96,Beirao-da-Veiga.Dassi.ea:23};
the continuous interior penalty method \cite{Burman.Fernandez.ea:06,Burman.Fernandez:07};
grad-div stabilizations \cite{Olshanskii.Lube.ea:09,De-Frutos.Garcia-Archilla.ea:19}.
A scheme is referred to as \emph{Reynolds quasi-robust} if, assuming sufficient regularity on the solution, it admits velocity error estimates that do not depend on inverse powers of the viscosity coefficient. See \cite{volker2021} for a survey on Reynolds quasi-robust finite element methods for the classical incompressible Navier–Stokes equations.

A third challenging aspect, originally pointed out in~\cite{Linke:14},
is \emph{pressure-robustness}.
At the continuous level, irrotational body forces do not affect the velocity field.
Pressure-robust schemes preserve this property at the discrete level.
From a mathematical standpoint, this translates into velocity error estimates that are independent of the pressure.
Achieving pressure robustness requires to embed a form of compatibility with the de Rham complex in the discretization of the forcing term, either by using $\boldsymbol{H}(\operatorname{div})$-conforming (projections of) the test function as, e.g., in \cite{Linke:14,John.Linke.ea:17,Di-Pietro.Ern.ea:16,Castanon-Quiroz.Di-Pietro:20,li2023ema,Castanon-Quiroz.Di-Pietro:24}, or by interpolating in a way compatible with the Helmholtz--Hodge decomposition \cite{Beirao-da-Veiga.Dassi.ea:22,Di-Pietro.Droniou.ea:24}.
\smallskip

Recent works \cite{Schroeder.Lube:18,Barrenechea.Burman.ea:20,Han.Hou:21,Beirao-da-Veiga.Di-Pietro.ea:25} have pointed out the possibility to achieve both Reynolds-semi-robustess and pressure-robustness through the use of $\boldsymbol{H}(\operatorname{div})$-conforming approximations of the velocity for incompressible flows of Newtonian fluids, with developments also for more advanced problems such as magnetohydrodynamics \cite{beirao2024robust,da2024pressure}.
Furthermore, in \cite{Beirao-da-Veiga.Di-Pietro.ea:24} the authors developed regime-dependent estimates based on (local) dimensionless numbers for a robust Discontinuous Galerkin (DG) discretization of $p$-type diffusion with advection, underlying for the first time the delicate interplay among these two physics.

In this work, we combine these recent advances to establish what are, to the best of our knowledge, the first Reynolds-semi-robust and pressure-robust velocity error estimates for an approximation of the $p$-Navier--Stokes problem based on Brezzi--Douglas--Marini velocity and discontinuous pressure.
These estimates additionally account for pre-asymptotic convergence rates depending on the power-law exponent and on the convection- or diffusion-dominated nature of each element and face.
\smallskip

The rest of this work is organized as follows.
In Section~\ref{sec:Discrete setting} we establish the continuous and discrete setting.
The fully discrete problem is stated in Section~\ref{sec: Discrete problem}, which also contains a study of its stability.
The Reynolds-semi-robust and pressure-robust velocity error estimate is proved in Section~\ref{sec:Velocity error analysis}.
Finally, a complete numerical validation is provided in Section~\ref{sec:Numerical tests}, where we consider both manufactured solutions to validate the theoretical findings and tests meant to assess the behaviour of the scheme in more physical situations.

\section{Setting} \label{sec:Discrete setting}

\subsection{Continuous problem} \label{sec:p-Navier-Stokes-Problem}

Let $\Omega\subset\Real^d$, with $d\in \{2, 3 \}$, denote an open, bounded, connected polygonal or polyhedral domain, and let $\tF > 0$ denote a final time. We consider the evolutionary incompressible Navier--Stokes flow of a non-Newtonian fluid:
\smallskip\\
Find the velocity field $ \boldsymbol{u}:[0,\tF] \times \Omega \to \mathbb{R}^d $ and the pressure field $ p:(0,\tF\rbrack \times \Omega \to \mathbb{R} $  such that
\begin{subequations}\label{eq:continuous.problem}
  \begin{alignat}{2}
    \partial_t \boldsymbol{u}
    - \boldsymbol{\nabla} \cdot \boldsymbol{\sigma}(\boldsymbol{\nabla u})
    +(\boldsymbol{u} \cdot \boldsymbol{\nabla}) \boldsymbol{u}
    + \boldsymbol{\nabla} p
    &= \boldsymbol{f}   &\qquad &\text{in $(0,\tF] \times \Omega$}\, \label{eq:continuous.problem.momentum},
      \\
      \boldsymbol{\nabla} \cdot \boldsymbol{u}
      &= 0 &\qquad &\text{in $(0,\tF] \times \Omega$}\, \label{eq:continuous.problem.mass},
        \\
        \boldsymbol{u}
        &= \boldsymbol{0}          &\qquad &\text{on $(0,\tF] \times \partial\Omega$}\, \label{eq:continuous.problem.boundary.condition},
          \\
          \boldsymbol{u}(0,\cdot) &= \boldsymbol{u}^0 &\qquad &\text{in $\Omega$}\, \label{eq:continuous.problem.initial.condition},
  \end{alignat}
\end{subequations}
where $\boldsymbol{f}:(0,\tF\rbrack \times \Omega \to \Real^d$ represents an external body force and $\boldsymbol{u}^0 : \Omega \to \Real^d$ a given divergence-free initial velocity with vanishing trace on $\partial \Omega$, while $\boldsymbol{\sigma}$ represents the diffusive flux function detailed below.

Given two real numbers $\nu >0$ and $r\in (1,\infty)$, let
\begin{equation}\label{eq:sobolev:indexes}
  {r}' \coloneqq \frac{r}{r-1} \in (1,\infty), \quad  \overline{r} \coloneqq \max\{r,2\} \in [2,\infty), \quad \underline{r} \coloneqq \min \{ r, 2\} \in (1,2].
\end{equation}
For $\mathbb{T} \in \{ \Real^d, \Real^{d\times d} \}$, the diffusive flux function is defined as follows:
\begin{equation}\label{def:sigma}
  \boldsymbol{\sigma} : \mathbb{T} \ni \boldsymbol{A} \mapsto \nu \, |\boldsymbol{A}|^{r-2} \, \boldsymbol{A} \in \mathbb{T}.
\end{equation}
This function satisfies the following properties (see \cite[Corollary A.3]{Botti.Castanon-Quiroz.ea:21}): For all $\boldsymbol{A},\boldsymbol{B} \in \mathbb{T}$,
\begin{gather}
  |\boldsymbol{\sigma}(\boldsymbol{A}) - \boldsymbol{\sigma}(\boldsymbol{B})|
  \lesssim
  \nu \left(
  |\boldsymbol{A}|^r + |\boldsymbol{B}|^r
  \right)^\frac{r - \underline{r}}{r}
  |\boldsymbol{A} - \boldsymbol{B}|^{\underline{r}-1} \label{eq:sigma:holder:cont},
  \\
  \left(
  \boldsymbol{\sigma}(\boldsymbol{A}) - \boldsymbol{\sigma}(\boldsymbol{B})
  \right) \star \left(\boldsymbol{A} - \boldsymbol{B}\right)
  \gtrsim \nu \left( |\boldsymbol{A}|^r + |\boldsymbol{B}|^r \right)^\frac{\underline{r}-2}{r} |\boldsymbol{A} - \boldsymbol{B}|^{\overline{r}}, \label{eq:sigma:monotonicity}
\end{gather}
where $\star$ denotes the appropriate tensor contraction (dot product if $\boldsymbol{A} ,\, \boldsymbol{B} \in \Real^d$ or the Frobenius product if  $\boldsymbol{A} ,\, \boldsymbol{B} \in \Real^{d\times d}$).

\begin{remark}[Choice of model problem]
  Equation~\eqref{eq:continuous.problem} has to be intended as a simplified problem that retains all the difficulties related to regime-dependent behaviours of numerical approximations.
  A more physically accurate version of the diffusion term would be obtained replacing $\boldsymbol{\sigma}( \boldsymbol{\nabla} \boldsymbol{u} )$ with $\boldsymbol{\sigma}( \boldsymbol{\varepsilon} (\boldsymbol{u}) )$, where $\boldsymbol{\varepsilon}(\boldsymbol{u}) \coloneqq \frac12\left(\boldsymbol{\nabla} \boldsymbol{u} + \boldsymbol{\nabla} \boldsymbol{u}^\top\right)$ denotes the rate-of-strain tensor.
  This would only require minor changes in the analysis since, also due to the presence of DG-like jumps in the formulation, Korn-type inequalities hold for the presented scheme; see, e.g., \cite[Lemma~7.23]{Di-Pietro.Droniou:20} for the case $r = 2$ (the extension to $r \neq 2$ is possible in the spirit of \cite[Section~6.2.1]{Di-Pietro.Droniou:20}).
\end{remark}

\subsection{Mesh and inequalities up to a constant}
Let $\{\Th\}_h$ be a family of conforming simplicial meshes of $\Omega$ in the sense of \cite[Chapter 2]{Ciarlet:02}.
For any $T\in\Th$, we denote by $\omega_T$ the union of the mesh elements sharing at least one face with $T$, which are collected in the set $\elements{T}$.
We moreover denote by $\Fh$ the set of mesh faces and by $\Fhi \subset \Fh$ the subset of interfaces contained in $\Omega$.
Given a face $F\in\Fh$, we denote by $\TF$ the set of mesh elements sharing $F$ and by $\omega_F$ their union.
For any mesh element or face $Y \in \Th \cup \Fh$, we denote by $h_Y$ its diameter and set $h \coloneq \max_{T \in \Th} h_T$.

To avoid naming generic constants, from this point on we will use the notation $a\lesssim b$ (resp., $a \gtrsim b$) to express the inequality $a \le C b$ (resp., $a \ge C b$), with $C$ independent of the mesh size and time step, the viscosity parameter $\nu$, and the solution, but possibly depending on other quantities including the domain, the ambient dimension $d$, the mesh regularity parameter, and the Sobolev index $r$.
We will write $a \simeq b$ in lieu of ``$a \lesssim b$ and $b \lesssim a$''.
Furthermore, we will use the symbol $c(\delta)$ for a
generic constant, possibly different at each occurrence, which depends on the parameter $\delta$ but is independent of the mesh size, the problem data and
solution.

\subsection{Function spaces}

Let $Y \in \Th \cup \Fh \cup \{\Omega\}$. For an integer $m \geq 0 $ and a real number $1 \leq q \leq \infty$, we denote by $\scalarSobolev{m}{q}{Y}$ the usual Sobolev space on $Y$ equipped with the standard norm $\norm{\scalarSobolev{m}{q}{Y}}{\cdot}$ and seminorm $\seminorm{\scalarSobolev{m}{q}{Y}}{\cdot}$. For $m=0$, we obtain the Lebesgue space $L^q(Y) \coloneqq \scalarSobolev{0}{q}{Y}$ while, for $q=2$, we have the Hilbert space $H^m(Y) \coloneqq \scalarSobolev{m}{2}{Y}$.
We define the following broken Sobolev space on $\Th$:
\[
\scalarSobolev{m}{q}{\Th} \coloneqq
\left\{v\in L^q(\Omega) \st \restr{v}{T} \in \scalarSobolev{m}{q}{T} \text{ for all } T \in \Th \right\}.
\]
We will also need broken spaces defined on local sets $\mathcal{T}_Y$, $Y \in \Th \cup \Fh$, defined in a similar way. We furthermore introduce the broken gradient $\boldsymbol{\nabla}_h : \scalarSobolev{1}{1}{\Th} \to L^1(\Th)^d$ such that $\restr{(\boldsymbol{\nabla}_h \varphi)}{T} \coloneqq \boldsymbol{\nabla}\restr{\varphi}{T}$ for all $T \in \Th$.

Given a polynomial degree $m \geq 0$, we define the broken polynomial space
\[
\scalarPoly{m}(\Th) \coloneqq \cbracket{v_h \in L^1(\Omega) \st \restr{v_h}{T} \in \scalarPoly{m}(T) \text{ for all } T \in \Th},
\]
where $\scalarPoly{m}(Y)$, $Y \in \Th \cup \Fh$, denotes the space of polynomials of total degree up to $m$ on $Y$.
We also introduce the convention $\scalarPoly{-1}(Y) \coloneq \{ 0\}$ and, for $m \ge -1$, the local $L^2$-orthogonal projector on $\mathcal{P}^m(Y)$ is denoted by $\pi_{0,Y}^m$.
The global $L^2$-orthogonal projector on $\scalarPoly{m}(\Th)$ is denoted by $\pi_{0,h}^m$, and is defined setting, for all $\varphi \in L^1(\Omega)$,
\[
\restr{(\pi_{0,h}^m \varphi )}{T}
\coloneqq \pi_{0,T}^m \restr{\varphi}{T}
\qquad \forall T \in \Th.
\]

Spaces of vector- or tensor-valued functions, as well as their elements, are denoted in boldface font.
For example, $ \boldsymbol{\varphi} \in \vectSobolev{m}{q}{Y}$ denotes a vector- or tensor-valued function, with the distinction between vector or tensor being clear from the context.
Finally, in the following we denote by $\Hdiv{\Omega}$ the space of functions in $\boldsymbol{L}^2(\Omega)$ with divergence in $L^2(\Omega)$.

\subsection{Raviart--Thomas interpolation}

Given an integer $k \ge 0$, for all $T \in \Th$, we denote by $\RT^k(T) \coloneqq \vectPoly{k}(T) + \boldsymbol{x} \scalarPoly{k}(T)$ the usual Raviart--Thomas space \cite{Boffi.Brezzi.ea:13} of order $k$.
The corresponding interpolator $\projRTEle{k}: \boldsymbol{H}^1(T) \to \RT^k(T)$ is uniquely defined by the following conditions:
For all $\boldsymbol{v} \in \boldsymbol{H}^1(T)$,
\begin{equation}\label{eq:projRTEle}
  \projLEle{k-1} \, \projRTEle{k} \boldsymbol{v} = \projLEle{k-1} \boldsymbol{v}
  \text{ and }
  (\projRTEle{k} \boldsymbol{v})\cdot \normal_F = \pi_{0,F}^k (\boldsymbol{v} \cdot \normal_F)
  \quad \forall F \in \FT.
\end{equation}
We have the following approximation properties (see, for instance, \cite[Theorem 16.4]{Ern.Guermond:21}):
For all $q \in [1,\infty]$ and all integers $s = 1, \ldots, k+1$ and $j=0\ldots,s$, it holds, for any $T\in \Th$,
\begin{equation}\label{eq:approx:RT:ele}
  \seminorm{\vectSobolev{j}{q}{T}}{\boldsymbol{v} - \projRTEle{k} \boldsymbol{v}} \lesssim h_T^{s-j} \seminorm{\vectSobolev{s}{q}{T}}{\boldsymbol{v}} \quad \forall \boldsymbol{v} \in \vectSobolev{s}{q}{T},
\end{equation}
and, for any $F\in \FT$,
\begin{equation}\label{eq:approx:RT:face}
  \norm{\boldsymbol{L}^q(F)}{\boldsymbol{v} - \projRTEle{k} \boldsymbol{v}}
  \lesssim h_T^{s-\frac{1}{q}} \seminorm{\vectSobolev{s}{q}{T}}{\boldsymbol{v}} \quad \forall \boldsymbol{v} \in \vectSobolev{s}{q}{T}.
\end{equation}
The global interpolator on $\RT^k(\Th) \coloneq \left\{ \boldsymbol{v}_h \in \Hdiv{\Omega} \;:\; \text{$\restr{\boldsymbol{v}_h}{T} \in \RT^k(T)$ for all $T \in \Th$}\right\}$, obtained enforcing the conditions \eqref{eq:projRTEle} for all $T \in \Th$ and all $F \in \Fh$, is denoted by $\projRT{k}$.

\subsection{Jumps, liftings and discrete gradient}

For each interface $F\in\Fhi$, we fix once and for all an orientation for the unit normal vector $\normal_F$.
Denoting by $T_1$ and $T_2$ the elements sharing $F$ ordered so that $\normal_F$ points out of $T_1$, we define the jump and average operators such that, for any vector-valued function $\boldsymbol{v} \in \vectSobolev{1}{1}{\Th}$,
\[
\jump{\boldsymbol{v}} \coloneqq \restr{\boldsymbol{v}}{T_1} - \restr{\boldsymbol{v}}{T_2},\qquad
\avg{\boldsymbol{v}} \coloneqq \frac12 \rbracket{\restr{\boldsymbol{v}}{T_1} + \restr{\boldsymbol{v}}{T_2}}.
\]
The above operators are extended to boundary faces $F\in \Fh \setminus \Fhi$ setting
$\jump{\boldsymbol{v}} \coloneqq \boldsymbol{v}$ and
$\avg{\boldsymbol{v}} \coloneqq \boldsymbol{v}$.
Given an integer $k\ge 0$, for each $F\in\Fh$ we define the tensor-valued local trace lifting
$\rF:\boldsymbol{L}^1(F) \to \vectPoly{k}(\Th)$
such that, for all vector-valued $\boldsymbol{w} \in \boldsymbol{L}^1(F)$,
\begin{equation}\label{eq:rF}
  \int_\Omega \rF ( \boldsymbol{w}  ) : \boldsymbol{\tau}_h
  = \int_F \boldsymbol{w} \cdot \rbracket{\avg{\boldsymbol{\tau}_h}\normal_F}
  \qquad \forall \boldsymbol{\tau}_h\in \vectPoly{k}(\Th)
\end{equation}
and we let $\Rh: \vectSobolev{1}{1}{\Th} \to \vectPoly{k}(\Th)$ be the global face jumps lifting such that, for any $\boldsymbol{v} \in \vectSobolev{1}{1}{\Th}$,
\begin{equation}\label{eq:Rh}
  \Rh \boldsymbol{v} \coloneqq \sum_{F\in\Fh}\rF(\jump{\boldsymbol{v}}).
\end{equation}
Finally, we define the discrete gradient $\Gh: \vectSobolev{1}{1}{\Th}\to \boldsymbol{L}^1(\Omega)^d$ setting
\begin{equation}\label{eq:Gh}
  \Gh \boldsymbol{v} \coloneqq \boldsymbol{\nabla}_h \boldsymbol{v} - \Rh\boldsymbol{v} .
\end{equation}

The following result is a straightforward consequence of \cite[Lemma 3.2 and Remark 3.2]{Beirao-da-Veiga.Di-Pietro.ea:24}, together with the approximation property of $\projRT{k}$ discussed in the previous section.
\begin{lemma}[Boundedness and approximation properties of the discrete gradient]\label{lem:approx:Gh}
  Let $q\in [1,\infty]$, $m\in \{ 0,\ldots,k \}$, and $\boldsymbol{w} \in \boldsymbol{W}^{1,q}_0(\Omega) \cap \boldsymbol{W}^{m+1,q}(\Th)$.
  Then, the following holds for any $T\in \Th$:
  \begin{alignat}{2}
    \label{eq:Gh:stab}
    \norm{\boldsymbol{L}^q(T)}{\Gh \projRT{k} \boldsymbol{w}} &\lesssim \seminorm{\boldsymbol{W}^{1,q}(\TT)}{\boldsymbol{w}},
    \\ \label{eq:Gh:approx}
    \norm{\boldsymbol{L}^q(T)}{\Gh \projRT{k} \boldsymbol{w} - \boldsymbol{\nabla w}} &\lesssim h_T^{m} \seminorm{\boldsymbol{W}^{m+1,q}(\TT)}{\boldsymbol{w}}.
  \end{alignat}
\end{lemma}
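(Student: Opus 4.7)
The plan is to exploit that $\boldsymbol{w} \in \boldsymbol{W}^{1,q}_0(\Omega)$ has vanishing jumps across every face $F \in \Fh$: by continuity on interior faces, and because $\boldsymbol{w}$ vanishes on $\partial\Omega$ on boundary faces (where the convention $\jump{\boldsymbol{v}} = \boldsymbol{v}$ applies). Consequently, $\Rh \boldsymbol{w} = \boldsymbol{0}$ and $\boldsymbol{\nabla}_h \boldsymbol{w} = \boldsymbol{\nabla} \boldsymbol{w}$, which yields the decomposition
\begin{equation*}
\Gh \projRT{k} \boldsymbol{w} - \boldsymbol{\nabla} \boldsymbol{w}
= \boldsymbol{\nabla}_h (\projRT{k} \boldsymbol{w} - \boldsymbol{w})
- \Rh (\projRT{k} \boldsymbol{w} - \boldsymbol{w}).
\end{equation*}
The stability bound \eqref{eq:Gh:stab} then follows from the same decomposition after moving $\boldsymbol{\nabla} \boldsymbol{w}$ back to the right-hand side and using $\norm{\boldsymbol{L}^q(T)}{\boldsymbol{\nabla} \boldsymbol{w}} \le \seminorm{\boldsymbol{W}^{1,q}(T)}{\boldsymbol{w}}$, so both claims reduce to bounding the two summands above in the $\boldsymbol{L}^q(T)$ norm.

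The elementwise piece is controlled directly by \eqref{eq:approx:RT:ele} applied with $j = 1$: the choice $s = m + 1$ produces $h_T^m \seminorm{\boldsymbol{W}^{m+1,q}(T)}{\boldsymbol{w}}$ for \eqref{eq:Gh:approx}, while $s = 1$ returns $\seminorm{\boldsymbol{W}^{1,q}(T)}{\boldsymbol{w}}$ for \eqref{eq:Gh:stab}. For the lifting piece, the local support of $\rF$ gives $\Restr{\Rh (\projRT{k} \boldsymbol{w} - \boldsymbol{w})}{T} = \sum_{F \in \FT} \Restr{\rF ( \jump{\projRT{k} \boldsymbol{w} - \boldsymbol{w}} )}{T}$, and \cite[Lemma~3.2 and Remark~3.2]{Beirao-da-Veiga.Di-Pietro.ea:24} supplies a bound of the form $\norm{\boldsymbol{L}^q(T)}{\rF(\boldsymbol{v})} \lesssim h_F^{1/q - 1} \norm{\boldsymbol{L}^q(F)}{\boldsymbol{v}}$. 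Combined with the trace approximation estimate \eqref{eq:approx:RT:face} applied on each $T' \in \TF$, this produces the telescopic scaling $h_T^{1/q - 1} \cdot h_T^{s - 1/q} = h_T^{s - 1}$; specialising once to $s = m + 1$ and once to $s = 1$ delivers the two target bounds.

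The main obstacle is to align the lifting inequality of \cite{Beirao-da-Veiga.Di-Pietro.ea:24} with the present setting, checking both the $q$-dependence and the fact that the bound is element-local, i.e., that only faces in $\FT$ contribute to the $T$-norm. Once this is in place, the remaining subtleties are routine: on boundary faces $\jump{\projRT{k}\boldsymbol{w} - \boldsymbol{w}}|_F$ reduces to $(\projRT{k}\boldsymbol{w} - \boldsymbol{w})|_F$ and is still controlled by \eqref{eq:approx:RT:face}; and because $\rF$ couples $T$ with its face neighbours, the $\boldsymbol{W}^{m+1,q}$-seminorm naturally appears on $\TT$ rather than just $T$, matching the statement.
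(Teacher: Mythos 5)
Your proposal is correct and follows exactly the route the paper intends: the paper states this lemma as a direct consequence of the lifting bound in \cite[Lemma~3.2 and Remark~3.2]{Beirao-da-Veiga.Di-Pietro.ea:24} combined with the Raviart--Thomas approximation properties \eqref{eq:approx:RT:ele}--\eqref{eq:approx:RT:face}, which are precisely the two ingredients you assemble after the (correct) observation that $\Rh\boldsymbol{w}=\boldsymbol{0}$ for $\boldsymbol{w}\in\boldsymbol{W}^{1,q}_0(\Omega)$. The decomposition, the choice of exponents $s=1$ versus $s=m+1$, and the element-local handling of the lifting all match the intended argument.
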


\section[An H(div)-conforming scheme]{An $\boldsymbol{H}(\operatorname{div})$-conforming scheme}\label{sec: Discrete problem}

From this point on, we let a polynomial degree $k\ge 1$ be fixed.

\subsection{Discrete problem}

We denote by $\BDM^k(\Th)$ the Brezzi--Douglas--Marini space of degree $k$ over $\Th$ defined as (see, e.g., \cite{Boffi.Brezzi.ea:13})
\[
\BDM^k(\Th) \coloneq \vectPoly{k}(\Th) \cap \Hdiv{\Omega} \, ,
\]
and set
\[
\Vh{k}
\coloneqq \cbracket{ \boldsymbol{v}_h \in \BDM^k(\Th) \st \restr{( \boldsymbol{v}_h \cdot \normal)}{\partial \Omega } = 0}.
\]
Furthermore, we define
\[
\Qh{k-1} \coloneqq \left\{
q_h \in \scalarPoly{k-1}(\Th) \st \int_\Omega q_h = 0
\right\}.
\]

In order to state the fully discretized version of problem \eqref{eq:continuous.problem}, we introduce a sequence of time steps $t^n \coloneqq n \, \tstep,$ with  $n = 0, 1, 2, \ldots, N$, where $\tstep \coloneqq \frac{\tF}{N}$ is the time step length.
For the sake of brevity, at a fixed time $t^n$, for any smooth enough time-dependent function $\varphi$ we introduce the abridged notation $\varphi^n \coloneq \varphi(t^n)$, with the understanding that $\varphi^n$ is a function of space only if $\varphi$ depends on both space and time.
Assuming $\boldsymbol{f} \in C^0((0,\tF]; \boldsymbol{L}^2(\Omega))$ and defining the discrete initial condition as $\boldsymbol{u}_h^0 \coloneqq \projRT{k} \boldsymbol{u}^0$, we consider the following approximation of \eqref{eq:continuous.problem}:
\\
For $n=1,\ldots,N$, find $\rbracket{\boldsymbol{u}_h^n,p_h^n} \in \Vh{k} \times \Qh{k-1}$ such that, for all $\rbracket{\boldsymbol{v}_h, q_h} \in \Vh{k} \times \Qh{k-1}$,
\begin{subequations}\label{eq:full.discrete.problem}
  \begin{align}\label{eq:full.discrete.problem:momentum}
    \int_\Omega \frac{\boldsymbol{u}_h^n - \boldsymbol{u}_h^{n-1}}{\tstep} \cdot \boldsymbol{v}_h
    + a_h(\boldsymbol{u}_h^n,\boldsymbol{v}_h)
    + b_h(\boldsymbol{v}_h,p_h^n)
    + c_h(\boldsymbol{u}_h^{n},\boldsymbol{u}_h^{n},\boldsymbol{v}_h)
    + j_h(\boldsymbol{u}_h^{n};\boldsymbol{u}_h^{n},\boldsymbol{v}_h)
    &= \int_\Omega \boldsymbol{f}^n \cdot \boldsymbol{v}_h \, ,
    \\ \label{eq:full.discrete.problem:mass}
    b_h(\boldsymbol{u}_h^n,q_h) &= 0 \, ,
  \end{align}
\end{subequations}
where
\begin{align}\label{eq:ah}
  a_h(\boldsymbol{w}_h,\boldsymbol{v}_h) &\coloneqq
  \int_\Omega \boldsymbol{\sigma}(\Gh \boldsymbol{w}_h) : \Gh \boldsymbol{v}_h
  + \sum_{F \in \mathcal{F}_h} h_F^{1-r} \int_F \boldsymbol{\sigma} (\jump{\boldsymbol{w}_h}) \cdot \jump{\boldsymbol{v}_h}, \\ \nonumber 
  b_h(\boldsymbol{v}_h,q_h) &\coloneqq
  -\int_\Omega q_h \, (\boldsymbol{\nabla} \cdot \boldsymbol{v}_h), \\ \label{eq:ch}
  c_h(\boldsymbol{z}_h,\boldsymbol{w}_h,\boldsymbol{v}_h) &\coloneqq
  \int_\Omega (\boldsymbol{z}_h \cdot \boldsymbol{\nabla}_h)\boldsymbol{w}_h \cdot \boldsymbol{v}_h
  -\sum_{F \in \mathcal{F}_h^i} \int_F (\boldsymbol{z}_h \cdot \normalF)\jump{\boldsymbol{w}_h}\cdot\avg{\boldsymbol{v}_h}, \\ \label{eq:jh}
  j_h(\boldsymbol{z}_h;\boldsymbol{w}_h,\boldsymbol{v}_h) & \coloneqq \sum_{F \in \mathcal{F}_h^i} \gamma_F(\boldsymbol{z}_h)\int_F  \jump{\boldsymbol{w}_h}\cdot \jump{\boldsymbol{v}_h}.
\end{align}
In \eqref{eq:jh}, the stabilization parameter is defined as
\begin{equation}\label{eq:gammaF}
  \gamma_F(\boldsymbol{z}_h) \coloneqq \max \left\{ \norm{L^\infty(F)}{\boldsymbol{z}_h \cdot \normalF}, C_F \right\},
\end{equation}
where $C_F\in {\mathbb R}$ is a fixed positive ``safeguard'' parameter. 
The presence of $C_F$ in \eqref{eq:gammaF} slightly differentiates $j_h$ from standard upwinding in the NS literature. Such parameter, which in practice can be taken very small, is introduced to guarantee some convection control also for vanishing {\it discrete} velocities. At the theoretical level, a justification is given by \eqref{eq:gamma-u-bound}.
Although the above discrete forms are defined on the discrete spaces $\Vh{k}$ and $\Qh{k}$, we intend them as extended to any sufficiently regular function whenever needed.
\begin{remark}[Time stepping scheme]
  Other choices of time discretizations, such as the semi-implicit Euler scheme or a higher order time stepping (i.e., the midpoint rule) could be adopted.
  In order to fix the ideas, here we focus on the implicit Euler scheme.
\end{remark}

\subsection{Summary of the theoretical results}

The rest of this section and the following one are devoted to the stability and convergence analysis of problem~\eqref{eq:full.discrete.problem}.
In particular, in Lemma~\ref{eq:stability} below we prove the existence of a solution at every time step as well as an a priori bound on the discrete solution. A Reynolds-semi-robust and pressure-robust error estimate for the velocity is proved in Theorem~\ref{thm:error.estimate} (for $ r \geq 2$) and Theorem~\ref{thm:error.estimate.rs2} (for $1<r<2$) of Section \ref{sec:Velocity error analysis}, see also Table \ref{tab:convergence.rates}. This error estimate accounts for the diffusion- or convection-dominated nature of the flow inside each element and on each face.
Assuming that the velocity and the rate-of-stress tensor have sufficient regularity to fully exploit the polynomial degree, the natural error measure (including the discrete $\boldsymbol{C}^0(\boldsymbol{L}^2)$-norm squared plus the $\boldsymbol{L}^{\overline{r}}(\boldsymbol{W}^{1,r})$-norm to the $\overline{r}$-th power) converges as follows:
\begin{itemize}
\item For diffusion-dominated flows:
  $$
  \begin{aligned}
    & \textrm{If } r < 2 \ : \quad \tstep^2 + h^{rk}, \\
     & \textrm{If } r = 2 \ : \quad \tstep^2 + h^{2k},  \\
    & \textrm{If } r > 2 \ : \quad
    \tstep^2 + \begin{cases}
      h^{r'(k+1)} & \text{if $k \ge \frac{r'}{2-r'}$},
      \\
      h^{2k} & \text{otherwise} \, ;
    \end{cases}
  \end{aligned}
  $$
\item For convection-dominated flows, as $\tstep^2 + h^{2k+1}$ irrespectively of the value of $r$.
\end{itemize}

\subsection{Divergence-free discrete velocity space}\label{sec:add-not}

We introduce the divergence-free subspace of $\Vh{k}$:
\[
\Zh{k} \coloneqq \cbracket{\boldsymbol{v}_h \in \Vh{k} \st b_h(\boldsymbol{v}_h, q_h)=0 \quad \forall q_h \in \Qh{k-1}}.
\]%
\begin{remark}[Interpolates of divergence-free functions with zero trace]\label{rem:int.u.Zh}
  It can be checked that, for all $\boldsymbol{v} \in \boldsymbol{H}^1_0(\Omega)$
  such that $\boldsymbol{\nabla} \cdot \boldsymbol{v} = 0$, $\projRT{k}\boldsymbol{v} \in \Zh{k}$.
\end{remark}
Observing that $\boldsymbol{\nabla} \cdot \Vh{k} = \Qh{k-1}$, it follows that condition $b_h(\boldsymbol{v}_h, q_h)=0$ for all $q_h \in \Qh{k-1}$ is equivalent to $\boldsymbol{\nabla} \cdot \boldsymbol{v}_h = 0$, so that $\Zh{k} = \left\{\boldsymbol{v}_h \in \Vh{k} \st \boldsymbol{\nabla} \cdot \boldsymbol{v}_h =0 \right\}$.
Notice that the discrete velocity solution of \eqref{eq:full.discrete.problem} belongs to $\Zh{k}$, therefore $\boldsymbol{u}_h^n$ is pointwise divergence-free.
The following property follows from \cite[Lemma 6.39]{Di-Pietro.Ern:12} noticing that the first two terms of \cite[Eq. (6.57)]{Di-Pietro.Ern:12} coincide with the expression \eqref{eq:ch} of $c_h$ and the last two are zero since $\boldsymbol{w}_h \in \Zh{k}$:
\begin{equation}\label{eq:ch:non-dissipativity}
  c_h (\boldsymbol{z}_h,\boldsymbol{v}_h, \boldsymbol{v}_h) = 0 \qquad \forall (\boldsymbol{z}_h, \boldsymbol{v}_h) \in \Zh{k} \times \Vh{k} .
\end{equation}
The following anti-symmetry property follows from \eqref{eq:ch:non-dissipativity} and linearity:
\begin{equation}\label{eq:ch:anti-symmetry}
  c_h(\boldsymbol{z}_h, \boldsymbol{w}_h, \boldsymbol{v}_h)
  = -c_h(\boldsymbol{z}_h, \boldsymbol{v}_h, \boldsymbol{w}_h)
  \qquad
  \forall (\boldsymbol{z}_h, \boldsymbol{w}_h, \boldsymbol{v}_h) \in \Zh{k} \times \Vh{k} \times \Vh{k}.
\end{equation}

\subsection{Discrete norms and seminorms}

We define the following broken norm for all $\boldsymbol{v} \in \vectSobolev{1}{q}{\Th}$ with $q \in [1,\infty]$:
\[
\norm{1,q,h}{\boldsymbol{v}}
\coloneqq
\begin{cases}
  \left(
  \norm{\boldsymbol{L}^q(\Omega)}{\boldsymbol{\nabla}_h \boldsymbol{v}}^q
  + \sum_{F \in \Fh} h_F^{1-q} \norm{\boldsymbol{L}^q(F)}{\jump{\boldsymbol{v}}}^q
  \right)^{\frac1q}
  &\text{if $q < \infty$},
  \\
  \norm{\boldsymbol{L}^\infty(\Omega)}{\boldsymbol{\nabla}_h \boldsymbol{v}}
  + \max_{F \in \Fh} h_F^{-1} \norm{\boldsymbol{L}^\infty(F)}{\jump{\boldsymbol{v}}}
  &\text{if $q = \infty$.}
\end{cases}
\]
For a fixed $\boldsymbol{w}_h \in \Vh{k}$, we additionally define the following semi-norm induced by $j_h$:
\begin{equation}\label{eq:convective.seminorm}
  \seminorm{\boldsymbol{w}_h}{\boldsymbol{v}}^2
  \coloneqq
  j_h(\boldsymbol{w}_h;\boldsymbol{v},\boldsymbol{v})
  = \sum_{F \in \Fhi} \gamma_F(\boldsymbol{w}_h) \norm{\boldsymbol{L}^2(F)}{\jump{\boldsymbol{v}}}^2.
\end{equation}

\begin{proposition}[Norm of the global jump lifting]
  For all $q \in [1,\infty)$, it holds
    \begin{equation}\label{eq:norm.Rh}
      \norm{\boldsymbol{L}^q(\Omega)}{\boldsymbol{R}_h^k \boldsymbol{v}_h}^q
      \simeq
      \sum_{F \in \Fh} h_F^{1-q} \norm{\boldsymbol{L}^q(F)}{\jump{\boldsymbol{v}_h}}^q
      \qquad \forall \boldsymbol{v}_h \in \Vh{k}.
    \end{equation}
\end{proposition}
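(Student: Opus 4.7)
The plan is to establish the two inequalities separately via $L^q$--$L^{q'}$ duality restricted to the finite-dimensional space $\vectPoly{k}(\Th)$, combined with a discrete trace inequality and scaling on shape-regular simplicial meshes. A useful preliminary observation is that, since $\boldsymbol{v}_h\in\Vh{k}\subset\BDM^k(\Th)$, the face jumps $\jump{\boldsymbol{v}_h}|_F$ belong to $\vectPoly{k}(F)$, so pairings against polynomial test tensors can be evaluated exactly using the $L^2$-projection property.

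For the upper bound, I would first use the $L^{q'}$-stability of the broken $L^2$-projection onto $\vectPoly{k}(\Th)$ on shape-regular meshes to rewrite
\[
\norm{\boldsymbol{L}^q(\Omega)}{\Rh\boldsymbol{v}_h}
\simeq \sup_{\boldsymbol{\tau}_h\in\vectPoly{k}(\Th)\setminus\{\boldsymbol{0}\}}\frac{\int_\Omega\Rh\boldsymbol{v}_h:\boldsymbol{\tau}_h}{\norm{\boldsymbol{L}^{q'}(\Omega)}{\boldsymbol{\tau}_h}}.
\]
By \eqref{eq:rF}--\eqref{eq:Rh}, the numerator equals $\sum_{F\in\Fh}\int_F\jump{\boldsymbol{v}_h}\cdot(\avg{\boldsymbol{\tau}_h}\normalF)$. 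Applying Hölder face-by-face, the discrete trace inequality $\norm{\boldsymbol{L}^{q'}(F)}{\avg{\boldsymbol{\tau}_h}}\lesssim h_F^{-1/q'}\norm{\boldsymbol{L}^{q'}(\omega_F)}{\boldsymbol{\tau}_h}$, discrete Hölder across faces, and the finite overlap of $\{\omega_F\}_{F\in\Fh}$ then yield the upper estimate with the correct weight $h_F^{-q/q'}=h_F^{1-q}$.

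For the lower bound, I would build a test tensor $\boldsymbol{\tau}_h\in\vectPoly{k}(\Th)$ element-wise, prescribing on each $T\in\Th$ and each $F\in\FT$ the face-normal trace
\[
\boldsymbol{\tau}_h|_T\,\normalF \coloneqq h_F^{1-q}\,\pi_{0,F}^k\bigl(|\jump{\boldsymbol{v}_h}|^{q-2}\jump{\boldsymbol{v}_h}\bigr).
\]
Since $\jump{\boldsymbol{v}_h}\in\vectPoly{k}(F)$, the $L^2$-projection property gives $\int_F\jump{\boldsymbol{v}_h}\cdot(\avg{\boldsymbol{\tau}_h}\normalF)=h_F^{1-q}\norm{\boldsymbol{L}^q(F)}{\jump{\boldsymbol{v}_h}}^q$, hence $\int_\Omega\Rh\boldsymbol{v}_h:\boldsymbol{\tau}_h=\sum_{F\in\Fh}h_F^{1-q}\norm{\boldsymbol{L}^q(F)}{\jump{\boldsymbol{v}_h}}^q$. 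A reference-element scaling estimate of the form $\norm{\boldsymbol{L}^{q'}(T)}{\boldsymbol{\tau}_h|_T}\lesssim h_T^{1/q'}\sum_{F\in\FT}\norm{\boldsymbol{L}^{q'}(F)}{\boldsymbol{\tau}_h|_T\normalF}$, together with $L^{q'}$-stability of $\pi_{0,F}^k$, the identity $(q-1)q'=q$, the bound $|\FT|\le d+1$, and $h_T\simeq h_F$, yields $\norm{\boldsymbol{L}^{q'}(\Omega)}{\boldsymbol{\tau}_h}\lesssim\bigl(\sum_F h_F^{1-q}\norm{\boldsymbol{L}^q(F)}{\jump{\boldsymbol{v}_h}}^q\bigr)^{1/q'}$. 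Combining with $\int_\Omega\Rh\boldsymbol{v}_h:\boldsymbol{\tau}_h\le\norm{\boldsymbol{L}^q(\Omega)}{\Rh\boldsymbol{v}_h}\norm{\boldsymbol{L}^{q'}(\Omega)}{\boldsymbol{\tau}_h}$ and rearranging gives the reverse inequality.

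The main obstacle lies in guaranteeing the existence of the element-wise test function $\boldsymbol{\tau}_h|_T\in\vectPoly{k}(T)$ realizing the prescribed face-normal traces, with uniformly controlled $L^{q'}$-norm. On the reference element this reduces to the surjectivity, with a bounded right inverse, of the trace operator $\boldsymbol{\tau}\mapsto(\boldsymbol{\tau}\,\normalF)_{F\in\FT}$ from $\vectPoly{k}(\hat T)$ onto $\prod_{F\in\FT}\vectPoly{k}(\hat F)$. For $k\ge 1$ this follows row-by-row from the unisolvence of the $\BDM^k$ face degrees of freedom; once this reference-element fact is in place, the remaining steps are routine scaling and discrete Hölder inequalities.
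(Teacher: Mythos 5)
Your proof is correct, and its skeleton coincides with the paper's: in both cases the lower bound is obtained by testing the lifting against an explicitly constructed tensor in a $\BDM$-type space whose face normal traces are prescribed (and whose interior N\'ed\'elec moments are set to zero), with the norm of that tensor controlled by a reference-element scaling argument; the upper bound is in both cases a matter of H\"older/Cauchy--Schwarz, discrete trace inequalities and mesh regularity, which the paper simply delegates to cited references while you spell out a duality argument over $\vectPoly{k}(\Th)$. The one genuine difference lies in how general $q$ is handled in the lower bound. The paper constructs the test tensor only for $q=2$, with normal trace $h_F^{-1}\jump{\boldsymbol{v}_h}$, and then transfers the equivalence to $q\neq 2$ via direct and inverse Lebesgue embeddings for polynomials on elements and faces; you instead work directly at exponent $q$ by prescribing the projected duality map $h_F^{1-q}\,\pi_{0,F}^k\bigl(|\jump{\boldsymbol{v}_h}|^{q-2}\jump{\boldsymbol{v}_h}\bigr)$ as normal trace, exploiting $(q-1)q'=q$ and the $L^{q'}$-stability of $\pi_{0,F}^k$. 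This buys a self-contained argument valid for every $q\in[1,\infty)$ at once (with the usual reading at $q=1$, where the trace is the bounded sign field and $q'=\infty$), at the price of a slightly more technical test function, whereas the paper's route is shorter but leaves the embedding transfer implicit. The ``main obstacle'' you identify --- a bounded right inverse of the normal-trace map on $\vectPoly{k}(\hat{T})$ --- is exactly the ingredient the paper invokes through the unisolvence of the $\BDM^k$ degrees of freedom (face moments plus $\NE^{k-2}(T)$ interior moments), so your construction rests on the same solid ground; no gap remains.
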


\begin{proof}
  The fact that $\norm{\boldsymbol{L}^q(\Omega)}{\boldsymbol{R}_h^k \boldsymbol{v}_h}^q \lesssim \sum_{F \in \Fh} h_F^{1-q} \norm{\boldsymbol{L}^q(F)}{\jump{\boldsymbol{v}_h}}^q$ can be proved using similar arguments as for \cite[Eq.~(4.42)]{Di-Pietro.Ern:12} together with the bound \cite[Eq.~(3.9)]{Beirao-da-Veiga.Di-Pietro.ea:24}.
  
  Let us prove briefly the converse inequality.
  In what follows, given a space $\boldsymbol{W}$ of vector-valued functions, we denote by $\boldsymbol{W} \otimes \Real^d$ the space of tensor-valued functions with rows in $\boldsymbol{W}$.
  Starting with $q = 2$, for a fixed $\boldsymbol{v}_h \in \Vh{k}$, we let $\boldsymbol{\tau}_h \in \BDM^k(\Th) \otimes \Real^d$ be such that $\boldsymbol{\tau}_h \normal_F = h_F^{-1} \jump{\boldsymbol{v}_h}$ for all $F \in \Fh$ and, if $k \ge 2$, $\int_T \boldsymbol{\tau}_h : \boldsymbol{\upsilon} = 0$ for all $\boldsymbol{\upsilon} \in \NE^{k-2}(T) \otimes \Real^d$, where $\NE^{k-2}(T)$ denotes the N\'ed\'elec space of degree $k-2$ on $T$.
  By unisolvence of the degrees of freedom together with a scaling argument and $h_T \simeq h_F$ for all $F \in \FT$, we have that $\norm{\boldsymbol{L}^2(T)}{\boldsymbol{w}}^2 \simeq \sum_{F \in \FT} h_F \norm{L^2(F)}{\boldsymbol{w} \cdot \normal_F}^2$ for all $T \in \Th$ and all $\boldsymbol{w} \in \BDM^k(T)$ such that $\int_T \boldsymbol{w} \cdot \boldsymbol{z} = 0$ for all $\boldsymbol{z} \in \NE^{k-2}(T)$ if $k \ge 2$.
  Applying this results to $(\boldsymbol{\tau}_h)|_T$, summing over $T \in \Th$, and using the fact that each face appears at most twice in the right-hand side, we conclude that
  \begin{equation}\label{eq:norm.tau.h}
    \norm{\boldsymbol{L}^2(\Omega)}{\boldsymbol{\tau}_h}^2
    \simeq \sum_{F \in \Fh} h_F^{-1} \norm{\boldsymbol{L}^2(F)}{\jump{\boldsymbol{v}_h}}^2.
  \end{equation}
  Thus, we can write
  \[
  \begin{aligned}
    \sum_{F \in \Fh} h_F^{-1} \norm{\boldsymbol{L}^2(F)}{\jump{\boldsymbol{v}_h}}^2
    \overset{\eqref{eq:rF},\,\eqref{eq:Rh}}&= \int_\Omega \Rh \boldsymbol{v}_h : \boldsymbol{\tau}_h
    \le \norm{\boldsymbol{L}^2(\Omega)}{\Rh \boldsymbol{v}_h}
    \norm{\boldsymbol{L}^2(\Omega)}{\boldsymbol{\tau}_h}
    \\
    &\overset{\eqref{eq:norm.tau.h}}\lesssim \norm{\boldsymbol{L}^2(\Omega)}{\Rh \boldsymbol{v}_h}
    \left(
    \sum_{F \in \Fh} h_F^{-1} \norm{\boldsymbol{L}^2(F)}{\jump{\boldsymbol{v}_h}}^2
    \right)^{\frac12},
  \end{aligned}
  \]
  which, after simplification, gives \eqref{eq:norm.Rh} for $q = 2$.
  The case $q \neq 2$ can be deduced from the case $q = 2$ using direct and inverse Lebesgue embeddings (see \cite[Lemma~1.25]{Di-Pietro.Droniou:20}) along with mesh regularity.
\end{proof}
\subsection{Properties of the discrete diffusive form}

\begin{lemma}[Norm equivalence]\label{lem:ah:norm:equivalence}
  For all $\boldsymbol{v}_h \in \Vh{k}$,
  \begin{equation}\label{eq:ah:norm:equivalence}
    a_h (\boldsymbol{v}_h,\boldsymbol{v}_h) \simeq \nu \norm{1,r,h}{\boldsymbol{v}_h}^r.
  \end{equation}
\end{lemma}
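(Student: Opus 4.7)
The plan is to exploit the fact that $\boldsymbol{\sigma}(\boldsymbol{A}) \star \boldsymbol{A} = \nu |\boldsymbol{A}|^r$ for any tensor $\boldsymbol{A}$ (directly from the definition \eqref{def:sigma}). Plugging this into the definition of $a_h$ at $\boldsymbol{w}_h = \boldsymbol{v}_h$, the form reduces to
\[
a_h(\boldsymbol{v}_h,\boldsymbol{v}_h)
= \nu \Norm{\boldsymbol{L}^r(\Omega)}{\Gh \boldsymbol{v}_h}^r
+ \nu \sum_{F \in \Fh} h_F^{1-r} \Norm{\boldsymbol{L}^r(F)}{\jump{\boldsymbol{v}_h}}^r.
\]
The jump contribution coincides with the second term of $\|\boldsymbol{v}_h\|_{1,r,h}^r$, so the whole argument reduces to proving that
\[
\Norm{\boldsymbol{L}^r(\Omega)}{\Gh \boldsymbol{v}_h}^r + \sum_{F \in \Fh} h_F^{1-r} \Norm{\boldsymbol{L}^r(F)}{\jump{\boldsymbol{v}_h}}^r
\simeq
\Norm{\boldsymbol{L}^r(\Omega)}{\boldsymbol{\nabla}_h \boldsymbol{v}_h}^r + \sum_{F \in \Fh} h_F^{1-r} \Norm{\boldsymbol{L}^r(F)}{\jump{\boldsymbol{v}_h}}^r.
\]

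To this end I would use the splitting $\boldsymbol{\nabla}_h \boldsymbol{v}_h = \Gh \boldsymbol{v}_h + \Rh \boldsymbol{v}_h$. The triangle inequality in $\boldsymbol{L}^r(\Omega)$ together with the elementary estimate $(a+b)^r \lesssim a^r + b^r$ (valid for $r \geq 1$, absorbed in the $\lesssim$ notation since we allow constants depending on $r$) yields
\[
\Norm{\boldsymbol{L}^r(\Omega)}{\boldsymbol{\nabla}_h \boldsymbol{v}_h}^r
\lesssim \Norm{\boldsymbol{L}^r(\Omega)}{\Gh \boldsymbol{v}_h}^r + \Norm{\boldsymbol{L}^r(\Omega)}{\Rh \boldsymbol{v}_h}^r,
\]
and symmetrically with the roles of $\boldsymbol{\nabla}_h \boldsymbol{v}_h$ and $\Gh \boldsymbol{v}_h$ swapped. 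Invoking \eqref{eq:norm.Rh} with $q=r$, the $\boldsymbol{L}^r$-norm of $\Rh \boldsymbol{v}_h$ is equivalent to the jump seminorm $\sum_F h_F^{1-r}\|\jump{\boldsymbol{v}_h}\|_{\boldsymbol{L}^r(F)}^r$, which appears on both sides of the target equivalence. Hence each direction of the norm equivalence follows by a one-line triangle inequality argument.

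The proof is therefore very short: no real obstacle is anticipated, since the only non-trivial ingredient is the $\boldsymbol{L}^r$ bound on $\Rh$, which has already been established in the preceding proposition. The one point requiring a bit of care is ensuring the estimate \eqref{eq:norm.Rh} is applied with the correct value $q=r \in (1,\infty)$, which is allowed by its statement, and that the elementary $r$-power inequalities are consistently absorbed into the $\lesssim$/$\gtrsim$ convention fixed earlier.
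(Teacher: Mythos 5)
Your argument is correct and matches the paper's (one-line) proof exactly: expand $a_h(\boldsymbol{v}_h,\boldsymbol{v}_h)$ using $\boldsymbol{\sigma}(\boldsymbol{A})\star\boldsymbol{A}=\nu|\boldsymbol{A}|^r$, then swap $\Gh\boldsymbol{v}_h$ for $\boldsymbol{\nabla}_h\boldsymbol{v}_h$ via the triangle inequality and the lifting bound \eqref{eq:norm.Rh} with $q=r$, absorbing the resulting jump contributions into the face term. No gap; the application of \eqref{eq:norm.Rh} with $q=r\in(1,\infty)$ is indeed licit.
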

\begin{proof}
  From the definition \eqref{eq:ah} of $a_h$ it follows that 
  $\nu^{-1} a_h (\boldsymbol{v}_h,\boldsymbol{v}_h) = \norm{\boldsymbol{L}^r(\Omega)}{\Gh \boldsymbol{v}_h}^r + \sum_{F \in \Fh} h_F^{1-r} \norm{\boldsymbol{L}^r(F)}{\jump{\boldsymbol{v}_h}}^r$.
  Using \eqref{eq:Gh} followed by the fact that $(\alpha + \beta)^r \lesssim \alpha^r + \beta^r$ for all positive real numbers $\alpha$ and $\beta$, we conclude that $\nu^{-1} a_h (\boldsymbol{v}_h,\boldsymbol{v}_h) \lesssim \norm{\boldsymbol{L}^r(\Omega)}{\boldsymbol{\nabla}_h \boldsymbol{v}_h}^r + \norm{\boldsymbol{L}^r(\Omega)}{\Rh \boldsymbol{v}_h}^r + \sum_{F \in \Fh} h_F^{1-r} \norm{\boldsymbol{L}^r(F)}{\jump{\boldsymbol{v}_h}}^r \lesssim \norm{1,r,h}{\boldsymbol{v}_h}^r$, where the last inequality follows from \eqref{eq:norm.Rh} with $q=r$. 
  To prove the converse inequality, we start from $\norm{1,r,h}{\boldsymbol{v}_h}^r = \norm{\boldsymbol{L}^r(\Omega)}{\boldsymbol{\nabla}_h \boldsymbol{v}_h}^r + \sum_{F \in \Fh} h_F^{1-r} \norm{\boldsymbol{L}^r(F)}{\jump{\boldsymbol{v}_h}}^r$ and use 
  $\norm{\boldsymbol{L}^r(\Omega)}{\boldsymbol{\nabla}_h \boldsymbol{v}_h}^r
  \overset{\eqref{eq:Gh}}\le
  \norm{\boldsymbol{L}^r(\Omega)}{\Gh \boldsymbol{v}_h}^r
  + \norm{\boldsymbol{L}^r(\Omega)}{\Rh \boldsymbol{v}_h}^r$
  and \eqref{eq:norm.Rh} with $q = r$ to bound the last term in the right-hand side.
\end{proof}
By appropriately combining \eqref{eq:sigma:holder:cont}, \eqref{eq:sigma:monotonicity}, and \eqref{eq:ah:norm:equivalence}, one can deduce the following result:
\begin{lemma}[Boundedness and monotonicity of $a_h$]
  For all $\boldsymbol{w}_h,\boldsymbol{v}_h,\boldsymbol{z}_h \in \Vh{k}$, defining $\boldsymbol{e}_h \coloneqq \boldsymbol{w}_h - \boldsymbol{z}_h$ and recalling the definition \eqref{eq:sobolev:indexes} of $\overline{r}$ and $\underline{r}$, the following holds:
  \begin{alignat}{2}
    a_h (\boldsymbol{w}_h,\boldsymbol{v}_h) - a_h(\boldsymbol{z}_h,\boldsymbol{v}_h)
    &\lesssim
    \nu
    \left(
    \norm{1,r,h}{\boldsymbol{w}_h}^r
    + \norm{1,r,h}{\boldsymbol{z}_h}^r
    \right)^{\frac{r - \underline{r}}{r}}
    \norm{1,r,h}{\boldsymbol{e}_h}^{\underline{r}-1}
    \norm{1,r,h}{\boldsymbol{v}_h}, \label{eq:ah:continuity}\\
    a_h (\boldsymbol{w}_h,\boldsymbol{e}_h) - a_h(\boldsymbol{z}_h,\boldsymbol{e}_h)
    &\gtrsim
    \nu
    \left(
    \norm{1,r,h}{\boldsymbol{w}_h}^r
    + \norm{1,r,h}{\boldsymbol{z}_h}^r
    \right)^{\frac{\underline{r}-2}{r}}
    \norm{1,r,h}{\boldsymbol{e}_h}^{\overline{r}}. \label{eq:ah:coercivity}
  \end{alignat}
\end{lemma}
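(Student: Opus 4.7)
The plan is to deduce both inequalities by applying the pointwise algebraic bounds \eqref{eq:sigma:holder:cont} and \eqref{eq:sigma:monotonicity} inside the integrals defining $a_h$, combining with Hölder's inequality, and closing via the norm equivalence \eqref{eq:ah:norm:equivalence}. To streamline the bookkeeping of the volumetric and face contributions, I would regard the two pieces of $a_h$ as a single integral on the disjoint union $X \coloneqq \Omega \sqcup \bigsqcup_{F \in \Fh} F$ equipped with the positive measure $\mu$ defined by $d\mu|_\Omega = d\boldsymbol{x}$ and $d\mu|_F = h_F^{1-r}\, d\sigma_F$. Letting $W \coloneqq |\Gh \boldsymbol{w}_h|$ on $\Omega$ and $W \coloneqq |\jump{\boldsymbol{w}_h}|$ on each face, and similarly defining $Z, E, V$ from $\boldsymbol{z}_h, \boldsymbol{e}_h, \boldsymbol{v}_h$, the definition \eqref{eq:ah} combined with \eqref{eq:ah:norm:equivalence} and the identity $\boldsymbol{\sigma}(\boldsymbol{A}) \star \boldsymbol{A} = \nu |\boldsymbol{A}|^r$ give $\int_X W^r \, d\mu \simeq \norm{1,r,h}{\boldsymbol{w}_h}^r$, and analogously for the other fields.

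For the continuity bound \eqref{eq:ah:continuity}, I would first apply \eqref{eq:sigma:holder:cont} pointwise under the integrand, reducing the task to estimating
\[
\nu \int_X (W^r + Z^r)^{\frac{r - \underline{r}}{r}} \, E^{\underline{r}-1} \, V \, d\mu,
\]
and then invoke Hölder's inequality with the three exponents $p_1 = \frac{r}{r-\underline{r}}$, $p_2 = \frac{r}{\underline{r}-1}$, $p_3 = r$, whose reciprocals sum to one by direct computation. The three resulting factors translate, via norm equivalence, into precisely $(\norm{1,r,h}{\boldsymbol{w}_h}^r + \norm{1,r,h}{\boldsymbol{z}_h}^r)^{\frac{r-\underline{r}}{r}}$, $\norm{1,r,h}{\boldsymbol{e}_h}^{\underline{r}-1}$, and $\norm{1,r,h}{\boldsymbol{v}_h}$, matching the right-hand side of \eqref{eq:ah:continuity}. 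The degenerate case $r \le 2$, in which $r - \underline{r} = 0$ so that $p_1 = \infty$, is handled by simply dropping the (trivial) first factor.

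For the monotonicity bound \eqref{eq:ah:coercivity}, I would apply \eqref{eq:sigma:monotonicity} pointwise to reduce the claim to showing
\[
\int_X (W^r + Z^r)^{\frac{\underline{r}-2}{r}} E^{\overline{r}} \, d\mu
\gtrsim
\left( \int_X (W^r + Z^r) \, d\mu \right)^{\frac{\underline{r}-2}{r}} \left( \int_X E^r \, d\mu \right)^{\frac{\overline{r}}{r}}.
\]
When $r \ge 2$, the exponent $\underline{r} - 2$ vanishes, $\overline{r} = r$, and the estimate is immediate. The main obstacle is the case $r < 2$, where the exponent $\frac{\underline{r}-2}{r} = \frac{r-2}{r}$ is negative, so no direct pointwise lower bound is available. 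The standard remedy is a reverse Hölder argument: writing
\[
E^r = \bigl[(W^r + Z^r)^{\frac{r-2}{r}} E^2\bigr]^{r/2} \cdot (W^r + Z^r)^{\frac{2-r}{2}}
\]
and applying Hölder's inequality with conjugate exponents $2/r$ and $2/(2-r)$ yields
\[
\int_X E^r \, d\mu
\le \left( \int_X (W^r + Z^r)^{\frac{r-2}{r}} E^2 \, d\mu \right)^{r/2} \left( \int_X (W^r + Z^r) \, d\mu \right)^{(2-r)/2},
\]
and rearrangement delivers exactly the required lower bound on the target integral. Translating back through the norm equivalence then closes the proof.
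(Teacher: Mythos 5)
Your argument is correct and follows exactly the route the paper indicates (it gives no detailed proof, only the remark that the result follows ``by appropriately combining'' \eqref{eq:sigma:holder:cont}, \eqref{eq:sigma:monotonicity}, and \eqref{eq:ah:norm:equivalence}): pointwise application of the H\"older continuity and strong monotonicity of $\boldsymbol{\sigma}$, a three-exponent H\"older inequality for the continuity bound, a reverse-H\"older rearrangement for the monotonicity bound when $r<2$, and the norm equivalence to conclude. The disjoint-union measure is a clean way to treat the volume and face contributions uniformly, and the exponent bookkeeping checks out in all cases.
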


\begin{remark}[Quasi-norms]
  The monotonicity and continuity results here above are directly written in terms of classical (DG-type) Sobolev norms since, also for the sake of readability, the error estimates in the present contribution are developed in such norms. An alternative approach would be to bridge more explicitly with suitable DG-type quasi-norms (in the spirit, e.g., of \cite{Barrett.Liu:94} for the Stokes problem with conforming elements) with the intent of obtaining quasi-optimal error bounds in those quasi-norms as an intermediate result.
  For the specific case of lowest-order (piecewise linear) DG-type approximation, estimates of order one, matching the convergence rate of the approximation error, have been obtained in~\cite{Diening.Koner.ea:14,Malkmus.Ruzicka.ea:17}
\end{remark}

\subsection{Stability}

\begin{lemma}[Existence of a discrete solution and a priori estimate]\label{eq:stability}
  Let $\boldsymbol{f} \in \boldsymbol{C}^{0}\left((0,\tF];\boldsymbol{L}^2(\Omega)\right)$.
  Then, problem \eqref{eq:full.discrete.problem} admits at least one solution.
  Furthermore, the discrete velocity satisfies the following stability estimate for all $n \geq 1$:
  \[
  \norm{\boldsymbol{L}^2(\Omega)}{\boldsymbol{u}_h^n}^2
  + \sum_{\ell=1}^n \tstep \left(
  \nu \norm{1,r,h}{\boldsymbol{u}_h^\ell}^r
  + \seminorm{\boldsymbol{u}_h^\ell}{\boldsymbol{u}_h^\ell}^2
  \right)
  \lesssim
  \rbracket{\sum_{\ell=1}^n\tstep \norm{\boldsymbol{L}^2(\Omega)}{\boldsymbol{f}^\ell}}^2
  + \norm{\boldsymbol{L}^2(\Omega)}{\boldsymbol{u}_h^0}^2 \, ,
  \]
  with hidden constant depending only on $k$ and the mesh regularity parameter.
\end{lemma}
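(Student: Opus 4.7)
The plan is to derive an energy estimate by testing \eqref{eq:full.discrete.problem:momentum} with $\boldsymbol{v}_h = \boldsymbol{u}_h^n$ and \eqref{eq:full.discrete.problem:mass} with $q_h = p_h^n$, then summing the results. With this choice, the pressure contribution $b_h(\boldsymbol{u}_h^n, p_h^n)$ cancels; the convective contribution $c_h(\boldsymbol{u}_h^n, \boldsymbol{u}_h^n, \boldsymbol{u}_h^n)$ vanishes thanks to \eqref{eq:ch:non-dissipativity}, since the discrete mass equation forces $\boldsymbol{u}_h^n \in \Zh{k}$; and the stabilization term becomes exactly the seminorm $\seminorm{\boldsymbol{u}_h^n}{\boldsymbol{u}_h^n}^2$ by the definition \eqref{eq:convective.seminorm}.

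For the remaining contributions, I would apply the polarization identity
\[
\int_\Omega (\boldsymbol{u}_h^n - \boldsymbol{u}_h^{n-1}) \cdot \boldsymbol{u}_h^n
\ge \tfrac{1}{2}\!\left(\norm{\boldsymbol{L}^2(\Omega)}{\boldsymbol{u}_h^n}^2
- \norm{\boldsymbol{L}^2(\Omega)}{\boldsymbol{u}_h^{n-1}}^2\right)
\]
to the discrete time derivative, the norm equivalence \eqref{eq:ah:norm:equivalence} to turn $a_h(\boldsymbol{u}_h^n,\boldsymbol{u}_h^n)$ into $\nu \norm{1,r,h}{\boldsymbol{u}_h^n}^r$, and Cauchy--Schwarz to the right-hand side, yielding at each time level
\[
\tfrac{1}{2\tstep}\!\left(\norm{\boldsymbol{L}^2(\Omega)}{\boldsymbol{u}_h^n}^2 - \norm{\boldsymbol{L}^2(\Omega)}{\boldsymbol{u}_h^{n-1}}^2\right)
+ \nu\norm{1,r,h}{\boldsymbol{u}_h^n}^r + \seminorm{\boldsymbol{u}_h^n}{\boldsymbol{u}_h^n}^2
\lesssim \norm{\boldsymbol{L}^2(\Omega)}{\boldsymbol{f}^n}\,\norm{\boldsymbol{L}^2(\Omega)}{\boldsymbol{u}_h^n}.
\]
Multiplying by $\tstep$ and summing over $s = 1,\dots,n$ gives a telescoping left-hand side and the forcing term $\sum_{s=1}^n \tstep \norm{\boldsymbol{L}^2(\Omega)}{\boldsymbol{f}^s}\,\norm{\boldsymbol{L}^2(\Omega)}{\boldsymbol{u}_h^s}$ on the right. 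To absorb the $\boldsymbol{u}_h^s$-dependence on the right, I would apply a discrete Gronwall-type argument: letting $M_n \coloneqq \max_{1\le s \le n} \norm{\boldsymbol{L}^2(\Omega)}{\boldsymbol{u}_h^s}$, the telescoping inequality gives $M_n^2 \lesssim \norm{\boldsymbol{L}^2(\Omega)}{\boldsymbol{u}^0}^2 + M_n \sum_{s=1}^n \tstep \norm{\boldsymbol{L}^2(\Omega)}{\boldsymbol{f}^s}$, and solving this quadratic in $M_n$ yields the stated bound (with the initial data controlled via the $\boldsymbol{L}^2$-stability of $\projRT{k}$ applied to $\boldsymbol{u}_h^0 = \projRT{k} \boldsymbol{u}^0$).

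Finally, to establish the existence of a discrete solution at each time step, I would use a standard topological argument. Since the mass equation makes the pressure a Lagrange multiplier for the constraint $\boldsymbol{u}_h^n \in \Zh{k}$, one can equivalently seek $\boldsymbol{u}_h^n \in \Zh{k}$ as the zero of a continuous map $\Phi : \Zh{k} \to \Zh{k}$ encoding \eqref{eq:full.discrete.problem:momentum} restricted to divergence-free test functions (so that $b_h$ drops out). The energy identity above shows that $\int_\Omega \Phi(\boldsymbol{v}_h) \cdot \boldsymbol{v}_h > 0$ whenever $\norm{\boldsymbol{L}^2(\Omega)}{\boldsymbol{v}_h}$ exceeds a threshold depending only on the data, so a standard corollary of Brouwer's fixed-point theorem yields the existence of a zero; the pressure is then recovered by the inf--sup condition associated with $b_h$ on $\Vh{k} \times \Qh{k-1}$. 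The main delicate point is the Gronwall-type inversion needed to decouple $M_n$ from the right-hand side while keeping the stabilization seminorm on the left-hand side, as the constant in \eqref{eq:ah:norm:equivalence} and the convexity manipulations must be tracked to obtain the asserted numerical prefactor $\nicefrac{3}{2}$.
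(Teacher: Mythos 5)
Your proposal is correct and follows the same overall strategy as the paper: test with $(\boldsymbol{u}_h^n,p_h^n)$, use the non-dissipativity \eqref{eq:ch:non-dissipativity} of $c_h$, the identity $j_h(\boldsymbol{u}_h^n;\boldsymbol{u}_h^n,\boldsymbol{u}_h^n)=\seminorm{\boldsymbol{u}_h^n}{\boldsymbol{u}_h^n}^2$, the coercivity side of \eqref{eq:ah:norm:equivalence}, and the polarization identity for the discrete time derivative; existence is obtained exactly as in the paper via the Brouwer-type corollary on $\Zh{k}$ and the pressure via the inf--sup condition. The one point where you genuinely diverge is the absorption of the forcing term $\sum_{s}\tstep\norm{\boldsymbol{L}^2(\Omega)}{\boldsymbol{f}^s}\norm{\boldsymbol{L}^2(\Omega)}{\boldsymbol{u}_h^s}$: the paper makes two passes through the energy inequality, first dropping the positive dissipative terms to derive the recursion $\norm{\boldsymbol{L}^2(\Omega)}{\boldsymbol{u}_h^n}\lesssim\tstep\norm{\boldsymbol{L}^2(\Omega)}{\boldsymbol{f}^n}+\norm{\boldsymbol{L}^2(\Omega)}{\boldsymbol{u}_h^{n-1}}$ and iterating to get \eqref{eq:stab.uhn.l2}, then re-inserting that bound into the right-hand side before summing; you instead sum once and close the estimate with $M_n\coloneqq\max_{1\le s\le n}\norm{\boldsymbol{L}^2(\Omega)}{\boldsymbol{u}_h^s}$ and a quadratic inequality in $M_n$. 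Both routes avoid an exponential Gr\"onwall factor and yield the claimed bound (up to the harmless value of the constants, since hidden constants already enter through \eqref{eq:ah:norm:equivalence}); yours is marginally more compact, the paper's keeps every step at the level of elementary recursions. The only place your sketch is materially thinner than the paper's argument is the continuity of the map $\Phi$ on $\Zh{k}$, which you assert but which is not entirely immediate here because the stabilization parameter $\gamma_F(\boldsymbol{w}_h)$ in \eqref{eq:gammaF} depends nonlinearly (through a max with an $L^\infty$ norm) on the unknown itself; the paper devotes the estimates of $\mathfrak{T}_1$--$\mathfrak{T}_4$ to precisely this verification, and you would need the Lipschitz bound $|\gamma_F(\boldsymbol{w}_h)-\gamma_F(\boldsymbol{w}_h^j)|\le\norm{L^\infty(F)}{(\boldsymbol{w}_h-\boldsymbol{w}_h^j)\cdot\normalF}$ together with finite-dimensional norm equivalence to complete it.
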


\begin{proof}
  We first show that problem \eqref{eq:full.discrete.problem} admits at least one solution, and then the stability estimate.
  \medskip\\
  \underline{\textbf{Existence of a solution.}}
  We consider the finite dimensional Hilbert space $\Zh{k}$, endowed with an arbitrary scalar product $(\cdot, \cdot)_{\Zh{k}}$ and the induced norm $\norm{\Zh{k}}{\cdot}$. We define, for fixed $\boldsymbol{z}_h \in \Zh{k}$ and $\boldsymbol{\phi} \in \boldsymbol{L}^2(\Omega)$,
  the function $\Psi_{\boldsymbol{z}_h,\boldsymbol{\phi}} : \Zh{k} \to \Zh{k}$ such that
  \[
  (\Psi_{\boldsymbol{z}_h,\boldsymbol{\phi}} \boldsymbol{w}_h , \boldsymbol{v}_h)_{\Zh{k}} \coloneqq
  \int_\Omega \frac{\boldsymbol{w}_h - \boldsymbol{z}_h}{\tstep} \cdot \boldsymbol{v}_h
  + a_h(\boldsymbol{w}_h,\boldsymbol{v}_h)
  + c_h(\boldsymbol{w}_h,\boldsymbol{w}_h,\boldsymbol{v}_h)
  + j_h(\boldsymbol{w}_h;\boldsymbol{w}_h,\boldsymbol{v}_h)
  - \int_\Omega \boldsymbol{\phi} \cdot \boldsymbol{v}_h.
  \]
  At each time step $n$, the velocity solution $\boldsymbol{u}_h^n$ of \eqref{eq:full.discrete.problem} is characterized by
  \[
  (\Psi_{\boldsymbol{u}_{h}^{n-1},\boldsymbol{f}^n} \boldsymbol{u}_h^n,\boldsymbol{v}_h)_{\Zh{k}} = 0
  \qquad \forall \boldsymbol{v}_h \in \Zh{k}.
  \]
  The existence of a solution to this problem can be proved using \cite[Chapter IV Corollary 1.1]{Girault.Raviart:86}.
  In particular, we need to show that, for fixed $\boldsymbol{z}_h \in \Zh{k}$ and $\boldsymbol{\phi} \in \boldsymbol{L}^2(\Omega) $,
  \begin{itemize}
  \item $\Psi_{\boldsymbol{z}_h,\boldsymbol{\phi}}$ is continuous;
  \item $\Psi_{\boldsymbol{z}_h,\boldsymbol{\phi}}$ is coercive, i.e., there exists $\mu > 0$ such that
    $(\Psi_{\boldsymbol{z}_h,\boldsymbol{\phi}} \boldsymbol{w}_h , \boldsymbol{w}_h)_{\Zh{k}} \geq 0$
    for all $ \boldsymbol{w}_h \in \Zh{k}$ with
    $\norm{\Zh{k}}{\boldsymbol{w}_h} = \mu$.
  \end{itemize}
   Since continuity is a standard result, we only provide a brief justification. We proceed by taking a sequence $(\boldsymbol{w}_h^{j} \in \Zh{k})_{j \in \Natural}$ such that $\lim_{j \to \infty} \norm{\Zh{k}}{\boldsymbol{w}_h - \boldsymbol{w}_h^{j}} = 0$ and showing that
  \begin{equation}\label{eq:Psi.continuity}
    \lim_{j \to \infty} \norm{\Zh{k}}{ \Psi_{\boldsymbol{z}_h,\boldsymbol{\phi}} \boldsymbol{w}_h - \Psi_{\boldsymbol{z}_h,\boldsymbol{\phi}} \boldsymbol{w}_h^{j}} = 0.
  \end{equation}
  We first write $
  (\Psi_{\boldsymbol{z}_h,\boldsymbol{\phi}} \boldsymbol{w}_h - \Psi_{\boldsymbol{z}_h,\boldsymbol{\phi}} \boldsymbol{w}_h^{j},\boldsymbol{v}_h)_{\Zh{k}} = \mathfrak{T}_1 + \mathfrak{T}_2 + \mathfrak{T}_3 + \mathfrak{T}_4,
  $
  where 
  $\mathfrak{T}_1 \coloneqq \int_\Omega \frac{\boldsymbol{w}_h - \boldsymbol{w}_h^{j}}{\tstep} \cdot \boldsymbol{v}_h$, 
  $\mathfrak{T}_2 \coloneqq a_h(\boldsymbol{w}_h,\boldsymbol{v}_h) - a_h(\boldsymbol{w}_h^{j},\boldsymbol{v}_h) $, 
  $\mathfrak{T}_3 \coloneqq c_h(\boldsymbol{w}_h,\boldsymbol{w}_h,\boldsymbol{v}_h) - c_h(\boldsymbol{w}_h^{j},\boldsymbol{w}_h^{j},\boldsymbol{v}_h)$ and 
  $\mathfrak{T}_4 \coloneqq j_h(\boldsymbol{w}_h;\boldsymbol{w}_h,\boldsymbol{v}_h) - j_h(\boldsymbol{w}_h^{j};\boldsymbol{w}_h^{j},\boldsymbol{v}_h)$.

  We next bound the above terms. For the first term, we use a Cauchy--Schwarz inequality to write
  $
  \mathfrak{T}_1 \leq \frac{1}{\tstep} \norm{\boldsymbol{L}^2(\Omega)}{\boldsymbol{w}_h - \boldsymbol{w}_h^{j}} \norm{\boldsymbol{L}^2(\Omega)}{\boldsymbol{v}_h}$.
  For the second term, using the boundedness \eqref{eq:ah:continuity} of $a_h$, we have that
  $
  \mathfrak{T}_2 \lesssim \left( \norm{1,r,h}{\boldsymbol{w}_h}^r + \norm{1,r,h}{\boldsymbol{w}_h^{j}}^r  \right)^{\frac{r-\underline{r}}{r}} \norm{1,r,h}{\boldsymbol{w}_h - \boldsymbol{w}_h^{j} }^{\underline{r}-1} \norm{1,r,h}{\boldsymbol{v}_h}$.
  For the third term, after adding and subtracting $c_h(\boldsymbol{w}_h^{j},\boldsymbol{w}_h,\boldsymbol{v}_h)$, using the linearity of $c_h$ in its arguments, and concluding with $(2,2,\infty)$-H\"{o}lder inequalities, we can show that
  $
  \mathfrak{T}_3 \lesssim \left(\norm{\boldsymbol{L}^2(\Omega)}{\boldsymbol{w}_h} + \norm{\boldsymbol{L}^2(\Omega)}{\boldsymbol{w}_h^{j}}\right)\norm{\boldsymbol{L}^2(\Omega)}{\boldsymbol{w}_h - \boldsymbol{w}_h^{j}} \norm{1,\infty,h}{\boldsymbol{v}_h}$.
  For the last term, we add and subtract $j_h(\boldsymbol{w}_h;\boldsymbol{w}_h^{j},\boldsymbol{v}_h)$ and, after noticing that
  $\left|\gamma_F(\boldsymbol{w}_h) - \gamma_F(\boldsymbol{w}_h^{j})\right| \leq \norm{\boldsymbol{L}^\infty(F)}{(\boldsymbol{w}_h - \boldsymbol{w}_h^{j})\cdot \normalF}$ and using a discrete trace inequality on $\norm{\boldsymbol{L}^2(F)}{\jump{\boldsymbol{w}_h^{j}}}$ and $\norm{\boldsymbol{L}^2(F)}{\jump{\boldsymbol{v}_h}}$, we obtain
  \begin{equation*}
    \begin{aligned}
      \mathfrak{T}_4
      \lesssim \seminorm{\boldsymbol{w}_h}{\boldsymbol{w}_h - \boldsymbol{w}_h^{j}} \seminorm{\boldsymbol{w}_h}{\boldsymbol{v}_h}
      + \left(
      \max_{F \in \Fh } h_F^{-1}\norm{\boldsymbol{L}^\infty(\omega_F)}{\boldsymbol{w}_h - \boldsymbol{w}_h^{j} }  \right)\norm{\boldsymbol{L}^2(\Omega)}{\boldsymbol{w}_h^{j}} \norm{\boldsymbol{L}^2(\Omega)}{\boldsymbol{v}_h} \, .
    \end{aligned}
  \end{equation*}
  Combining the above estimates, recalling that $\underline{r} - 1 \overset{\eqref{eq:sobolev:indexes}}\geq 0$,
  that all norms on a finite dimensional space are equivalent,
  and that all the norms applied to $\boldsymbol{w}_h$, $\boldsymbol{w}_h^{j}$, $\boldsymbol{v}_h$ are finite gives \eqref{eq:Psi.continuity}.
  \smallskip

  For the coercivity, we notice that, using Lemma~\ref{lem:ah:norm:equivalence}, the non dissipativity \eqref{eq:ch:non-dissipativity} of $c_h$, and the definition~\eqref{eq:convective.seminorm} of the seminorm $\seminorm{\boldsymbol{z}_h}{\cdot}$, we can write
  \[
  (\Psi_{\boldsymbol{z}_h,\boldsymbol{\phi}} \boldsymbol{w}_h,\boldsymbol{w}_h )_{\Zh{k}}
  \gtrsim
  \int_\Omega \frac{\boldsymbol{w}_h - \boldsymbol{z}_h}{\tstep} \cdot \boldsymbol{w}_h + \nu \norm{1,r,h}{\boldsymbol{w}_h}^r + \seminorm{\boldsymbol{w}_h}{\boldsymbol{w}_h}^2-\int_\Omega \boldsymbol{\phi} \cdot \boldsymbol{w}_h.
  \]
  Using Cauchy-Schwarz inequalities on the integrals followed by the Young-type inequalty $ab \le a^2 + \frac14 b^2$, we get
  \begin{equation*}
    \begin{aligned}
      (\Psi_{\boldsymbol{z}_h,\boldsymbol{\phi}} \boldsymbol{w}_h,\boldsymbol{w}_h )_{\Zh{k}}
      &\gtrsim
      \frac{1}{\tstep}\norm{\boldsymbol{L}^2(\Omega)}{\boldsymbol{w}_h}^2 -  \frac{1}{4\tstep}\norm{\boldsymbol{L}^2(\Omega)}{\boldsymbol{w}_h}^2 - \frac{1}{\tstep}\norm{\boldsymbol{L}^2(\Omega)}{\boldsymbol{z}_h}^2 + \nu \norm{1,r,h}{\boldsymbol{w}_h}^r \\
      &\quad+ \seminorm{\boldsymbol{w}_h}{\boldsymbol{w}_h}^2
      -\tstep \norm{\boldsymbol{L}^2(\Omega)}{\boldsymbol{\phi}}^2 - \frac{1}{4\tstep} \norm{\boldsymbol{L}^2(\Omega)}{\boldsymbol{w}_h}^2 \\
      & \geq \frac{1}{\tstep}
      \left[
        \frac{1}{2} \left(
        \norm{\boldsymbol{L}^2(\Omega)}{\boldsymbol{w}_h}^2
        + \tstep \big(\nu \norm{1,r,h}{\boldsymbol{w}_h}^r + \seminorm{\boldsymbol{w}_h}{\boldsymbol{w}_h}^2\big)
        \right) - \big(
        \norm{\boldsymbol{L}^2(\Omega)}{\boldsymbol{z}_h}^2 + \tstep^2 \norm{\boldsymbol{L}^2(\Omega)}{\boldsymbol{\phi}}^2
        \big)
        \right] \\
      &\gtrsim
      \frac{1}{\tstep}
      \left[
        \frac{1}{2} \norm{\Zh{k}}{\boldsymbol{w}_h}^2
        - \big(
        \norm{\boldsymbol{L}^2(\Omega)}{\boldsymbol{z}_h}^2 + \tstep^2\norm{\boldsymbol{L}^2(\Omega)}{\boldsymbol{\phi}}^2
        \big)
        \right].
    \end{aligned}
  \end{equation*}
  If we choose $\mu$ equal to the square root of $2\big(\norm{\boldsymbol{L}^2(\Omega)}{\boldsymbol{z}_h}^2 + \tstep^2\norm{\boldsymbol{L}^2(\Omega)}{\boldsymbol{\phi}}^2\big)$, it follows $(\Psi_{\boldsymbol{z}_h,\boldsymbol{\phi}} \boldsymbol{w}_h , \boldsymbol{w}_h)_{\Zh{k}} \geq 0$ for all $ \boldsymbol{w}_h \in \Zh{k}$ with $\norm{\Zh{k}}{\boldsymbol{w}_h} = \mu$.

  The existence of the pressure component follows from the discrete inf-sup condition
  \begin{equation}\label{eq:inf-sup}
  \norm{L^{r'}(\Omega)}{q_h}
  \lesssim
  \sup_{\boldsymbol{v}_h \in \Vh{k} \setminus \{\boldsymbol{0}\}} \frac{b_h(\boldsymbol{v}_h,q_h)}{\norm{1,r,h}{\boldsymbol{v}_h}}
  \qquad \forall q_h \in \Qh{k-1} \, ,
  \end{equation}
  combined with a standard argument hinging on the fact that $\Psi_{\boldsymbol{u}_h^{n-1},\boldsymbol{f}^n} \boldsymbol{u}_h^n \in \mathrm{Im}(\mathcal{B}_h^{*})$, where $\mathcal{B}_h^{*} : \Qh{k-1} \to (\Vh{k})^{*}$ is defined as $\langle \mathcal{B}_h^{*} q_h , \boldsymbol{v}_h \rangle \coloneqq -b_h(\boldsymbol{v}_h,q_h)$; see for instance \cite[Chapter 5]{Boffi.Brezzi.ea:13}.
  To prove \eqref{eq:inf-sup}, one can proceed by exhibiting a Fortin operator \cite{Boffi.Brezzi.ea:13} (an example in $L^r$ norms can be found in \cite[Lemma 7.4]{Botti.Castanon-Quiroz.ea:21}). The only point to check is the boundedness of the BDM interpolator, which can be inferred as in \cite[Remark 2.2]{Beirao-da-Veiga.Di-Pietro.ea:25}, where the Raviart--Thomas--Nédélec space is considered, together with trace inequalities.
  \medskip\\
  \underline{\textbf{Stability estimate.}}
  Taking $(\boldsymbol{v}_h, q_h) = (\boldsymbol{u}_h^n,p_h^n)$ in \eqref{eq:full.discrete.problem}, we obtain
  \begin{equation}\label{eq:stab.basic}
    \int_\Omega \frac{\boldsymbol{u}_h^n - \boldsymbol{u}_h^{n-1}}{\tstep} \cdot \boldsymbol{u}_h^n
    + a_h(\boldsymbol{u}_h^n,\boldsymbol{u}_h^n)
    + c_h(\boldsymbol{u}_h^{n},\boldsymbol{u}_h^{n},\boldsymbol{u}_h^n)
    + j_h(\boldsymbol{u}_h^{n};\boldsymbol{u}_h^{n},\boldsymbol{u}_h^n)
    = \int_\Omega \boldsymbol{f}^n \cdot \boldsymbol{u}_h^n.
  \end{equation}
  Since $\boldsymbol{\nabla} \cdot \boldsymbol{u}_h^n = 0$ (cf. Section \ref{sec:add-not}), $c_h(\boldsymbol{u}_h^{n},\boldsymbol{u}_h^{n},\boldsymbol{u}_h^n)=0$ by \eqref{eq:ch:non-dissipativity}.
  Moreover, by \eqref{eq:convective.seminorm}, $j_h(\boldsymbol{u}_h^{n};\boldsymbol{u}_h^{n},\boldsymbol{u}_h^n) = \seminorm{\boldsymbol{u}_h^n}{\boldsymbol{u}_h^n}^2$.
  Finally, from \eqref{eq:ah:norm:equivalence}, we infer $a_h(\boldsymbol{u}_h^n,\boldsymbol{u}_h^n) \gtrsim \nu \norm{1,r,h}{\boldsymbol{u}_h^n}^r \ge 0$.
  Using these results and applying the Cauchy--Schwarz inequality to the right-hand side of the previous equation, we obtain
  \begin{equation}\label{eq:stab.initial}
  \int_\Omega \frac{\boldsymbol{u}_h^n - \boldsymbol{u}_h^{n-1}}{\tstep} \cdot \boldsymbol{u}_h^n \le
    \norm{\boldsymbol{L}^2(\Omega)}{\boldsymbol{f}^n} \norm{\boldsymbol{L}^2(\Omega)}{\boldsymbol{u}_h^n}.
  \end{equation}
  After multiplying by $\tstep$ and rearranging, we get
  \[
  \norm{\boldsymbol{L}^2(\Omega)}{\boldsymbol{u}_h^n}^2
  \leq \tstep \int_\Omega \boldsymbol{f}^n \cdot \boldsymbol{u}_h^n
  + \int_\Omega \boldsymbol{u}_h^{n-1} \cdot \boldsymbol{u}_h^n.
  \]
  Applying a Cauchy--Schwarz inequality to the integrals in the right-hand side and simplifying, we arrive at
  \[
  \norm{\boldsymbol{L}^2(\Omega)}{\boldsymbol{u}_h^n}
  \leq
  \tstep \norm{\boldsymbol{L}^2(\Omega)}{\boldsymbol{f}^n} + \norm{\boldsymbol{L}^2(\Omega)}{\boldsymbol{u}_h^{n-1}}  .
  \]
  Proceeding recursively, we obtain the following bound for the $\boldsymbol{L}^2$-norm of $\boldsymbol{u}_h^n$:
  \begin{equation}\label{eq:stab.uhn.l2}
    \norm{\boldsymbol{L}^2(\Omega)}{\boldsymbol{u}_h^n}
    \leq
    \sum_{i=1}^n\tstep \norm{\boldsymbol{L}^2(\Omega)}{\boldsymbol{f}^i}
    + \norm{\boldsymbol{L}^2(\Omega)}{\boldsymbol{u}_h^0} .
  \end{equation}
  Going back to \eqref{eq:stab.basic} with $n$ replaced by $\ell$, using the stability properties of $a_h$, $c_h$, and $j_h$ as above, applying the relation
  \begin{equation}\label{eq:relation.unsteady}
    (\alpha - \beta) \alpha
    = \frac12[ \alpha^2 + (\alpha - \beta)^2 - \beta^2 ]
    \ge \frac12(\alpha^2 - \beta^2)
  \end{equation}
  for the first term in the left-hand side, and bounding the right-hand side using \eqref{eq:stab.uhn.l2}, we infer
  \begin{multline*}
    \frac{1}{2 \, \tstep} \left(\norm{\boldsymbol{L}^2(\Omega)}{\boldsymbol{u}_h^{\ell}}^2
      - \norm{\boldsymbol{L}^2(\Omega)}{\boldsymbol{u}_h^{\ell-1}}^2 \right)
    + \nu \norm{1,r,h}{\boldsymbol{u}_h^{\ell}}^r
    + \seminorm{\boldsymbol{u}_h^{\ell}}{\boldsymbol{u}_h^{\ell}}^2
    \\
    \lesssim
    \norm{\boldsymbol{L}^2(\Omega)}{\boldsymbol{f}^{\ell}}\sum_{i=1}^{\ell}\tstep \norm{\boldsymbol{L}^2(\Omega)}{\boldsymbol{f}^i}
    +\norm{\boldsymbol{L}^2(\Omega)}{\boldsymbol{f}^{\ell}}\norm{\boldsymbol{L}^2(\Omega)}{\boldsymbol{u}_h^0}.
  \end{multline*}
  Observing that, whenever $\ell \le n$, it holds $\sum_{i=1}^{\ell}\tstep \norm{\boldsymbol{L}^2(\Omega)}{\boldsymbol{f}^i} \leq \sum_{i=1}^n\tstep \norm{\boldsymbol{L}^2(\Omega)}{\boldsymbol{f}^i}$, and summing for $\ell=1,\ldots,n$ the equation here above, after multiplying by $\tstep$ we get:
  \begin{equation*}
    \begin{aligned}
      &\frac{1}{2} \sum_{\ell=1}^n \rbracket{\norm{\boldsymbol{L}^2(\Omega)}{\boldsymbol{u}_h^{\ell}}^2
        - \norm{\boldsymbol{L}^2(\Omega)}{\boldsymbol{u}_h^{\ell-1}}^2}
      + \sum_{\ell=1}^n \tstep \left(
      \nu \norm{1,r,h}{\boldsymbol{u}_h^{\ell}}^r
      + \seminorm{\boldsymbol{u}_h^\ell}{\boldsymbol{u}_h^{\ell}}^2
      \right)
      \\
      &\qquad
      \lesssim
      \sum_{\ell=1}^n \tstep \norm{\boldsymbol{L}^2(\Omega)}{\boldsymbol{f}^{\ell}}\sum_{i=1}^n\tstep \norm{\boldsymbol{L}^2(\Omega)}{\boldsymbol{f}^i}
      + \sum_{\ell=1}^n\tstep \norm{\boldsymbol{L}^2(\Omega)}{\boldsymbol{f}^{\ell}}\norm{\boldsymbol{L}^2(\Omega)}{\boldsymbol{u}_h^0} ,
      \\
      &\qquad
      \lesssim
      \left(
      \sum_{\ell=1}^n \tstep \norm{\boldsymbol{L}^2(\Omega)}{\boldsymbol{f}^{\ell}}
      \right)^2
      + \norm{\boldsymbol{L}^2(\Omega)}{\boldsymbol{u}_h^0}^2,
    \end{aligned}
  \end{equation*}
  where the conclusion follows applying the Young's inequality to the second term in the right-hand side.
  Noticing that the first sum on the left-hand side is a telescoping sum, and rearranging, we obtain the desired result.
\end{proof}

\section{Velocity error analysis} \label{sec:Velocity error analysis}

\subsection{Regularity assumptions}
For the convergence analysis, we require the following regularity on the exact solution (Assumption \ref{reg:ass:u}).
\begin{assumption}[Regularity of the continuous solution]\label{reg:ass:u}
  There is $m \in \{1,\ldots,k\}$ such that the solution $\boldsymbol{u}$ of the continuous problem \eqref{eq:continuous.problem} satisfies
  \[
  \boldsymbol{u} \in \boldsymbol{H}^2(0,\tF; \boldsymbol{L}^2(\Omega)) \cap \boldsymbol{H}^1(0,\tF; \boldsymbol{H}^{m+1}(\Th))
  \cap \boldsymbol{C}^0([0,\tF]; \Xi) \, ,
  \]
  where
  \[
  \Xi \coloneqq \boldsymbol{W}_{0}^{1,r}(\Omega)
  \cap \boldsymbol{W}^{1,\infty}(\Th)
  \cap \boldsymbol{W}^{m+1,\bar{r}}(\Th) \, .
  \]
  Furthermore, the stress is assumed to satisfy
  \[
  \nu^{-\frac12} \boldsymbol{\sigma}(\boldsymbol{\nabla u})
  = \nu^{\frac12} |\boldsymbol{\nabla u}|^{r-2} \boldsymbol{\nabla u}
  \in \boldsymbol{C}^0([0,\tF];\boldsymbol{W}^{m+1,r'}(\Th) \cap \boldsymbol{H}^{m+\frac12}(\Th))\, .
  \]
\end{assumption}

\begin{remark}[Regularity assumptions]
    In the present context, we are interested in uncovering two key features of the method: its robustness in convection-dominated situations and the orders of convergence achievable for smooth solutions.
    In particular, the assumption $\boldsymbol{u} \in \boldsymbol{C}^0([0,\tF]; \boldsymbol{W}^{1,\infty}(\Omega))$ is required to make the Reynolds-semi-robust estimate of the convective term possible; see Lemma~\ref{lem:convective:error}.
    
    The assumptions involving the exponent $m$, on the other hand, make it possible to show to which extent it is possible to exploit the selected polynomial degree.
    The problem of identifying assumptions on the domain and on the right-hand side under which this regularity can be proved a priori is an altogether different one, and lies out of the scope of the present work.
    We notice, however, this is by no means specific to the non-linear problem considered here, in the sense that finite element error estimates are usually formulated by assuming (and not proving) sufficient regularity requirements for the exact solution.
    
    For the sake of completeness, we notice that a convergence of the scheme to minimal regularity solutions could in all likelihood be carried out using compactness arguments in the spirit of~\cite{prohl2008,Di-Pietro.Ern:10,Burman.Ern:08}.
    Such approach is, however, unsuitable to show the Reynolds-semi-robust nature of the scheme, which is precisely the purpose of the present work.
\end{remark}

\subsection{Error equation}

Let $\boldsymbol{u}$ and $ \{ \boldsymbol{u}_h^i \}_{i=1,\ldots,N} $ be the velocity fields solutions of the weak formulation of problem \eqref{eq:continuous.problem} and of the discrete problem \eqref{eq:full.discrete.problem}, respectively, and set
\begin{equation}\label{eq:ehn}
  \boldsymbol{e}_h^n \coloneqq \boldsymbol{u}_h^n - \projRT{k} \boldsymbol{u}^n.
\end{equation}
Notice that $\boldsymbol{e}_h^n \in \Zh{k}$, since $\boldsymbol{u}_h^n \in \Zh{k} $ (cf. Section \eqref{sec:add-not}) and also  $\projRT{k} \boldsymbol{u}^n \in \Zh{k}$ (see Remark~\ref{rem:int.u.Zh}).

\begin{proposition}[Error equation]\label{prop:EE}
  Let Assumption \ref{reg:ass:u} hold.
  The following error equation holds: For all $\boldsymbol{v}_h \in \Zh{k}$,
  \begin{multline}\label{eq:error:equation}
    \int_{\Omega} \frac{\boldsymbol{e}_h^n - \boldsymbol{e}_h^{n-1}}{\tstep} \cdot \boldsymbol{v}_h
    + \rbracket{a_h(\boldsymbol{u}_h^n,\boldsymbol{v}_h) - a_h(\projRT{k}\boldsymbol{u}^n,\boldsymbol{v}_h)}
    + c_h(\boldsymbol{u}_h^n,\boldsymbol{e}_h^n,\boldsymbol{v}_h)
    + j_h(\boldsymbol{u}_h^n;\boldsymbol{e}_h^n,\boldsymbol{v}_h)
    \\
    = \timeTerm{n} (\boldsymbol{v}_h)
    + \diffTerm{n} (\boldsymbol{v}_h)
    + \convTerm{n} (\boldsymbol{v}_h)
    + \upwTerm{n} (\boldsymbol{v}_h),
  \end{multline}
  with
  \begin{alignat}{2}
    \timeTerm{n} (\boldsymbol{v}_h)
    &\coloneqq
    \int_{\Omega} \partial_t \boldsymbol{u}^n \cdot \boldsymbol{v}_h
    - \int_{\Omega} \frac{\projRT{k} \boldsymbol{u}^n - \projRT{k} \boldsymbol{u}^{n-1}}{\tstep} \cdot \boldsymbol{v}_h \, , \label{eq:consistency:error:time}\\
    \diffTerm{n} (\boldsymbol{v}_h)
    &\coloneqq -\int_{\Omega} \boldsymbol{\nabla} \cdot \boldsymbol{\sigma}(\boldsymbol{\nabla} \boldsymbol{u}^n) \cdot \boldsymbol{v}_h
    - a_h(\projRT{k} \boldsymbol{u}^n,\boldsymbol{v}_h) \, , \label{eq:consistency:error:diffusive}\\
    \convTerm{n} (\boldsymbol{v}_h)
    &\coloneqq \int_{\Omega} (\boldsymbol{u}^n \cdot \boldsymbol{\nabla}) \boldsymbol{u}^n \cdot \boldsymbol{v}_h
    - c_h (\boldsymbol{u}_h^n,\projRT{k} \boldsymbol{u}^n,\boldsymbol{v}_h) \, , \label{eq:consistency:error:convective} \\
    \upwTerm{n} (\boldsymbol{v}_h)
    &\coloneqq
    -j_h(\boldsymbol{u}_h^n;\projRT{k} \boldsymbol{u}^n,\boldsymbol{v}_h) \, . \label{eq:consistency:error:upwind}
  \end{alignat}
\end{proposition}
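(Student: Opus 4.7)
The plan is to derive the error equation by subtracting an appropriately tested version of the continuous momentum equation \eqref{eq:continuous.problem.momentum} from the discrete momentum equation \eqref{eq:full.discrete.problem:momentum}, and then to reorganize the result by inserting the splitting $\boldsymbol{u}_h^n = \boldsymbol{e}_h^n + \projRT{k}\boldsymbol{u}^n$ inside each nonlinear form. There is no genuine analytical difficulty here; the argument is essentially algebraic, and the proposition plays the role of collecting the consistency errors for the later analysis.

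First I would fix $\boldsymbol{v}_h \in \Zh{k}$ as a test function. In the discrete momentum equation this kills the pressure contribution, since $b_h(\boldsymbol{v}_h,p_h^n)=0$ by definition of $\Zh{k}$. In the continuous equation, tested against the same $\boldsymbol{v}_h$, the pressure contribution $\int_\Omega \boldsymbol{\nabla} p^n \cdot \boldsymbol{v}_h$ also vanishes: indeed, $\boldsymbol{v}_h \in \Zh{k}$ is (pointwise) divergence-free as explained in Section~\ref{sec:add-not}, and it has vanishing normal trace on $\partial\Omega$ because $\boldsymbol{v}_h \in \Vh{k}$, so integration by parts eliminates this term. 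Subtracting the two identities therefore yields
\[
\int_\Omega \frac{\boldsymbol{u}_h^n - \boldsymbol{u}_h^{n-1}}{\tstep}\cdot\boldsymbol{v}_h - \int_\Omega \partial_t \boldsymbol{u}^n \cdot \boldsymbol{v}_h + a_h(\boldsymbol{u}_h^n,\boldsymbol{v}_h) + \int_\Omega \boldsymbol{\nabla}\cdot\boldsymbol{\sigma}(\boldsymbol{\nabla}\boldsymbol{u}^n)\cdot\boldsymbol{v}_h + c_h(\boldsymbol{u}_h^n,\boldsymbol{u}_h^n,\boldsymbol{v}_h) - \int_\Omega (\boldsymbol{u}^n\cdot\boldsymbol{\nabla})\boldsymbol{u}^n\cdot\boldsymbol{v}_h + j_h(\boldsymbol{u}_h^n;\boldsymbol{u}_h^n,\boldsymbol{v}_h) = 0.
\]

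Next I would split each occurrence of $\boldsymbol{u}_h^n$ that appears in a ``second slot'' of a form as $\boldsymbol{e}_h^n + \projRT{k}\boldsymbol{u}^n$. For the time-derivative term I would write $\boldsymbol{u}_h^n - \boldsymbol{u}_h^{n-1} = (\boldsymbol{e}_h^n - \boldsymbol{e}_h^{n-1}) + (\projRT{k}\boldsymbol{u}^n - \projRT{k}\boldsymbol{u}^{n-1})$, moving the projected part to the right-hand side and combining it with $-\int_\Omega \partial_t \boldsymbol{u}^n\cdot\boldsymbol{v}_h$ to produce exactly $\timeTerm{n}(\boldsymbol{v}_h)$ as defined in \eqref{eq:consistency:error:time}. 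For the diffusive term I would write $a_h(\boldsymbol{u}_h^n,\boldsymbol{v}_h) = \bigl(a_h(\boldsymbol{u}_h^n,\boldsymbol{v}_h) - a_h(\projRT{k}\boldsymbol{u}^n,\boldsymbol{v}_h)\bigr) + a_h(\projRT{k}\boldsymbol{u}^n,\boldsymbol{v}_h)$, leaving the difference on the left-hand side and pairing $a_h(\projRT{k}\boldsymbol{u}^n,\boldsymbol{v}_h)$ with $\int_\Omega \boldsymbol{\nabla}\cdot\boldsymbol{\sigma}(\boldsymbol{\nabla}\boldsymbol{u}^n)\cdot\boldsymbol{v}_h$ (moved to the right-hand side with the opposite sign) to reproduce $\diffTerm{n}(\boldsymbol{v}_h)$ from \eqref{eq:consistency:error:diffusive}. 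Similarly, I would write $c_h(\boldsymbol{u}_h^n,\boldsymbol{u}_h^n,\boldsymbol{v}_h) = c_h(\boldsymbol{u}_h^n,\boldsymbol{e}_h^n,\boldsymbol{v}_h) + c_h(\boldsymbol{u}_h^n,\projRT{k}\boldsymbol{u}^n,\boldsymbol{v}_h)$ and $j_h(\boldsymbol{u}_h^n;\boldsymbol{u}_h^n,\boldsymbol{v}_h) = j_h(\boldsymbol{u}_h^n;\boldsymbol{e}_h^n,\boldsymbol{v}_h) + j_h(\boldsymbol{u}_h^n;\projRT{k}\boldsymbol{u}^n,\boldsymbol{v}_h)$, keeping the terms involving $\boldsymbol{e}_h^n$ on the left and moving the remaining ones to the right to obtain $\convTerm{n}(\boldsymbol{v}_h)$ and $\upwTerm{n}(\boldsymbol{v}_h)$ as in \eqref{eq:consistency:error:convective}--\eqref{eq:consistency:error:upwind}.

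Collecting the four left-hand-side contributions gives exactly the left-hand side of \eqref{eq:error:equation}, and the four right-hand-side contributions give $\timeTerm{n}+\diffTerm{n}+\convTerm{n}+\upwTerm{n}$, proving the proposition. The only substantive point to be careful about—and what I would call the ``main obstacle'' in so far as there is one—is to justify cleanly the cancellation of the continuous pressure term against $b_h(\boldsymbol{v}_h,p_h^n)$; everything else is a mechanical reorganization of the two equations and definitions \eqref{eq:consistency:error:time}--\eqref{eq:consistency:error:upwind}.
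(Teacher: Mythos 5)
Your proposal is correct and follows essentially the same route as the paper: test the continuous momentum equation at $t^n$ with $\boldsymbol{v}_h\in\Zh{k}$ (the pressure term vanishing by integration by parts since $\boldsymbol{v}_h$ is divergence-free with zero normal trace, and $b_h(\boldsymbol{v}_h,p_h^n)=0$), combine with the discrete momentum equation, and then subtract the interpolant contributions $\projRT{k}\boldsymbol{u}^n$ from each form using linearity in the relevant slots. The sign bookkeeping in your reorganization matches the definitions \eqref{eq:consistency:error:time}--\eqref{eq:consistency:error:upwind} exactly, so no gap remains.
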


\begin{proof}
  Inferring $\boldsymbol{f}^n$ from \eqref{eq:continuous.problem.momentum} evaluated at time $t^n$, we have
  \[
  \int_{\Omega} \boldsymbol{f}^n \cdot \boldsymbol{v}_h =
  \int_{\Omega} \left(
  \partial_t \boldsymbol{u}^n - \boldsymbol{\nabla} \cdot \boldsymbol{\sigma}(\boldsymbol{\nabla} \boldsymbol{u}^n)+(\boldsymbol{u}^n \cdot \boldsymbol{\nabla}) \boldsymbol{u}^n
  \right)\cdot \boldsymbol{v}_h,
  \]
  where the term involving the pressure vanishes due to an integration by parts followed by $\boldsymbol{\nabla} \cdot \boldsymbol{v}_h=0$ and $\restr{(\boldsymbol{v}_h \cdot \normal)}{\partial\Omega}=0$. Substituting the above expression into the discrete momentum equation \eqref{eq:full.discrete.problem:momentum} and noticing that $b_h(\boldsymbol{v}_h,p_h^n) = 0$ since $\boldsymbol{v}_h \in \Zh{k}$, we infer
  \begin{multline*}
    \int_{\Omega} \left( \partial_t \boldsymbol{u}^n - \boldsymbol{\nabla} \cdot \boldsymbol{\sigma}(\boldsymbol{\nabla} \boldsymbol{u}^n)+(\boldsymbol{u}^n \cdot \boldsymbol{\nabla}) \boldsymbol{u}^n \right) \cdot \boldsymbol{v}_h
    =
    \\
    \int_\Omega \frac{\boldsymbol{u}_h^n - \boldsymbol{u}_h^{n-1}}{\tstep} \cdot \boldsymbol{v}_h
    + a_h(\boldsymbol{u}_h^n,\boldsymbol{v}_h)
    + c_h(\boldsymbol{u}_h^{n},\boldsymbol{u}_h^{n},\boldsymbol{v}_h)
    + j_h(\boldsymbol{u}_h^{n};\boldsymbol{u}_h^{n},\boldsymbol{v}_h).
  \end{multline*}
  Subtracting from both sides of the above equation the quantity
  \[\int_{\Omega} \frac{\projRT{k} \boldsymbol{u}^n - \projRT{k} \boldsymbol{u}^{n-1}}{\tstep} \cdot \boldsymbol{v}_h
  + a_h(\projRT{k} \boldsymbol{u}^n,\boldsymbol{v}_h)
  +c_h (\boldsymbol{u}_h^n,\projRT{k} \boldsymbol{u}^n,\boldsymbol{v}_h)
  + j_h(\boldsymbol{u}_h^n;\projRT{k} \boldsymbol{u}^n,\boldsymbol{v}_h),
  \]
  recalling the definition of $\boldsymbol{e}_h^n$, and rearranging leads to the desired conclusion.
\end{proof}

\subsection{Convection- and diffusion-dominated elements and faces}

For all $T \in \Th$ and all $F \in \Fh$, let $\hat{K}_T^n \in [0,\infty]$ and $\hat{K}_F^n \in [0,\infty]$ be defined by 
\begin{equation}\label{eq:hat.K.T.F}
  \hat{K}_T^n \coloneqq \norm{\boldsymbol{L}^\infty(\omega_T)}{| \boldsymbol{\nabla} \boldsymbol{u}^n |^{r-2}},\quad
  \hat{K}_F^n \coloneqq \max\left\{ \norm{\boldsymbol{L}^\infty(F)}{| \boldsymbol{\nabla} \boldsymbol{u}^n |^{r-2}}, h_F^{2-r} \norm{\boldsymbol{L}^\infty(F)}{ | \jump{\projRT{k} \boldsymbol{u}^n} |^{r-2}} \right\},
\end{equation}
with the convention that $\hat{K}_T^n = \infty$ if the restriction of $| \boldsymbol{\nabla} \boldsymbol{u}^n |^{r-2}$ is not in $\boldsymbol{L}^\infty(\omega_T)$, and that $\hat{K}_F^n = \infty$
whenever the functions involved in its definition are not in $\boldsymbol{L}^\infty(F)$.
The quantities $\nu \hat{K}_T^n$ and $\nu \hat{K}_F^n$ are representative of the magnitude of the (velocity-dependent) diffusion coefficient in the constitutive law \eqref{def:sigma} at time $t^n$ on the element $T$ and the face $F$, respectively.
Notice that, when considering faces, we also include jumps since these represent an important part of the discrete gradient for discontinuous Galerkin methods.
We introduce the partitions $\Th = \Thc \sqcup \Thd$ and $\Fh = \Fhc \sqcup \Fhd$ of the sets of elements and faces into convection- and diffusion-dominated such that
\begin{gather} \label{eq:ReyT>1new}
  \nu \hat{K}_T^n < \norm{\boldsymbol{L}^\infty(T)}{\boldsymbol{u}^n} h_T
  \qquad \forall T \in \Thc,
  \\ \label{eq:ReyT<=1new}
  \norm{\boldsymbol{L}^\infty(T)}{\boldsymbol{u}^n} \le \nu \hat{K}_T^n h_T^{-1}
  \qquad \forall T \in  \Thd,
\end{gather}
and
\begin{gather} \label{eq:ReyF>1new}
  \nu \hat{K}_F^n < \norm{L^{\infty}(F)}{\boldsymbol{u}^n\cdot\normalF} h_F
  \qquad \forall F \in \Fhc,
  \\ \label{eq:ReyF<=1new}
  \norm{L^{\infty}(F)}{\boldsymbol{u}^n\cdot\normalF} \le \nu \hat{K}_F^n h_F^{-1}          \qquad \forall F \in \Fhd.
\end{gather}
We also note that
\begin{equation}\label{eq:ReyF>1new.gamma-u}
  \nu \hat{K}_F^n \lesssim \gamma_F(\boldsymbol{u}_h^n) h_F
  \qquad \forall F \in \Fhc,
\end{equation}
which follows by combining \eqref{eq:ReyF>1new} and the bound
\begin{equation}\label{eq:gamma-u-bound}
  \norm{L^{\infty}(F)}{\boldsymbol{u}^n\cdot\normalF} \lesssim \gamma_F(\boldsymbol{u}_h^n).
\end{equation}
This bound can be rigorously justified (at least globally) by the fact that $\norm{L^{\infty}(\Omega)}{\boldsymbol{u}}$ is bounded by assumption at all times and $\gamma_F(\boldsymbol{u}_h^n) \ge C_F > 0$ (see \eqref{eq:gammaF}).

\subsection{Time-derivative error}

\begin{lemma}[Estimate of the time-derivative error]\label{lem:time:derivative:error}
  Recalling the definition~\eqref{eq:consistency:error:time} of $\timeTerm{n}(\cdot)$, under Assumption \ref{reg:ass:u}, for $n=1,\ldots,N$ and any $\delta >0 $, it holds
  \begin{equation}\label{eq:timeTerm:estimate}
    \timeTerm{n} (\boldsymbol{e}_h^n)
    \le
    \delta \norm{\boldsymbol{L}^2(\Omega)}{\boldsymbol{e}_h^n}^2
    + c(\delta) \timeErr,
  \end{equation}
  where
  \begin{equation}\label{eq:timeErr}
    \timeErr \coloneq
    \tstep \norm{\boldsymbol{L}^2(t^{n-1}, t^n; \boldsymbol{L}^2(\Omega))}{\partial_{tt}^2 \boldsymbol{u}}^2
    + \frac{1}{\tstep}\sum_{T \in \Th} h_T^{2(m+1)} \norm{\boldsymbol{L}^2(t^{n-1},t^n;\boldsymbol{H}^{m+1}(T))}{\partial_t \boldsymbol{u}}^2.
  \end{equation}
\end{lemma}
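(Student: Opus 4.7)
The plan is to split $\timeTerm{n}(\boldsymbol{e}_h^n)$ into a pure time--discretization contribution and a spatial--interpolation contribution by inserting the intermediate term $\int_\Omega \frac{\boldsymbol{u}^n - \boldsymbol{u}^{n-1}}{\tstep} \cdot \boldsymbol{e}_h^n$. Specifically, from \eqref{eq:consistency:error:time} I would write
\[
\timeTerm{n}(\boldsymbol{e}_h^n)
= \int_\Omega \Bigl( \partial_t \boldsymbol{u}^n - \frac{\boldsymbol{u}^n - \boldsymbol{u}^{n-1}}{\tstep} \Bigr) \cdot \boldsymbol{e}_h^n
+ \int_\Omega \frac{(\boldsymbol{u}^n - \projRT{k}\boldsymbol{u}^n) - (\boldsymbol{u}^{n-1} - \projRT{k}\boldsymbol{u}^{n-1})}{\tstep} \cdot \boldsymbol{e}_h^n,
\]
and bound each integral by Cauchy--Schwarz, controlling the factor $\norm{\boldsymbol{L}^2(\Omega)}{\boldsymbol{e}_h^n}$ at the end by Young's inequality with parameter $\delta$, which yields the absorbed term $\delta \norm{\boldsymbol{L}^2(\Omega)}{\boldsymbol{e}_h^n}^2$ and a constant $c(\delta)$ multiplying the sum of the squared norms of the two ``residual'' functions.

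For the first integral I would use Taylor's theorem with integral remainder centered at $t^n$, giving
\[
\partial_t \boldsymbol{u}^n - \frac{\boldsymbol{u}^n - \boldsymbol{u}^{n-1}}{\tstep}
= \frac{1}{\tstep}\int_{t^{n-1}}^{t^n} (s - t^{n-1}) \, \partial_{tt}^2 \boldsymbol{u}(s) \, ds,
\]
and then a Cauchy--Schwarz inequality in time combined with the elementary bound $\int_{t^{n-1}}^{t^n}(s-t^{n-1})^2 ds \lesssim \tstep^3$ produces the first summand $\tstep \norm{\boldsymbol{L}^2(t^{n-1},t^n;\boldsymbol{L}^2(\Omega))}{\partial_{tt}^2 \boldsymbol{u}}^2$ in $\timeErr$.

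For the second integral I would exploit the fact that $\projRT{k}$ acts only in space, so it commutes with time differentiation and the numerator may be rewritten as $\int_{t^{n-1}}^{t^n}(\partial_t \boldsymbol{u} - \projRT{k} \partial_t \boldsymbol{u})(s)\, ds$. Applying Cauchy--Schwarz in time locally on each element and then the Raviart--Thomas approximation estimate \eqref{eq:approx:RT:ele} with $s = m+1$, $j = 0$, $q = 2$ gives, element by element,
\[
\norm{\boldsymbol{L}^2(T)}{\frac{(\boldsymbol{u}^n - \projRT{k}\boldsymbol{u}^n) - (\boldsymbol{u}^{n-1} - \projRT{k}\boldsymbol{u}^{n-1})}{\tstep}}^2
\lesssim \frac{h_T^{2(m+1)}}{\tstep} \norm{\boldsymbol{L}^2(t^{n-1},t^n;\boldsymbol{H}^{m+1}(T))}{\partial_t \boldsymbol{u}}^2,
\]
whose sum over $T\in\Th$ is exactly the second summand in \eqref{eq:timeErr}. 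Summing the two contributions, applying the Young inequality mentioned above, and collecting yields \eqref{eq:timeTerm:estimate}.

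No step is really delicate: the mild care point is ensuring the commutation of $\projRT{k}$ with $\partial_t$ (which is clear since the interpolator is defined pointwise in time through spatial moments) and the correct form of Taylor's remainder so as to get the $\tstep^{1/2}$ scaling, which is needed to match the $\tstep$ factor appearing in the first summand of $\timeErr$.
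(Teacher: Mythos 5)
Your proposal is correct and follows essentially the same route as the paper's proof: the same insertion of the intermediate difference quotient $\frac{\boldsymbol{u}^n-\boldsymbol{u}^{n-1}}{\tstep}$, the same Taylor remainder for the consistency of the backward difference, and the same commutation of $\projRT{k}$ with $\partial_t$ followed by the approximation estimate \eqref{eq:approx:RT:ele} (the paper uses Jensen's inequality where you use Cauchy--Schwarz in time, an immaterial difference). Note that your choice $(j,q,s)=(0,2,m+1)$ is the right one to produce the factor $h_T^{2(m+1)}$; the paper's stated $(0,2,m)$ is a typo.
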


\begin{proof}
  After adding and subtracting $\frac{\boldsymbol{u}^n - \boldsymbol{u}^{n-1}}{\tstep}$ and using the Cauchy--Schwarz inequality along with a generalized Young inequality, we split and estimate $\timeTerm{n} (\boldsymbol{e}_h^n)$ as follows:
  \begin{equation}\label{eq:err:timeTerm}
    \timeTerm{n} (\boldsymbol{e}_h^n) \le \delta \norm{\boldsymbol{L}^2(\Omega)}{\boldsymbol{e}_h^n}^2
    +c(\delta)\left( \timeTerm{n}_1+ \timeTerm{n}_2 \right),
  \end{equation}
  where
  \[
  \timeTerm{n}_1
  \coloneqq
  \Norm{\boldsymbol{L}^2(\Omega)}{\partial_t \boldsymbol{u}^n - \frac{\boldsymbol{u}^n-\boldsymbol{u}^{n-1}}{\tstep}}^2 ,
  \qquad
  \timeTerm{n}_2
  \coloneqq
  \Norm{\boldsymbol{L}^2(\Omega)}{\frac{\boldsymbol{u}^n-\boldsymbol{u}^{n-1}}{\tstep} - \frac{\projRT{k}\boldsymbol{u}^n-\projRT{k}\boldsymbol{u}^{n-1}}{\tstep}}^2.
  \]
  Using $\boldsymbol{u}^n = \boldsymbol{u}^{n-1} + \tstep \, \partial_t \boldsymbol{u}^n - \int_{t^{n-1}}^{t^n} (t-t^{n-1}) \, \partial_{tt}^2 \boldsymbol{u}(t,\cdot) \, dt$ and Jensen's inequality, we obtain that
  \begin{equation}\label{eq:estimate:timeTerm.1}
    \timeTerm{n}_1
    \lesssim \tstep \norm{\boldsymbol{L}^2(t^{n-1},t^n;\boldsymbol{L}^2(\Omega))}{\partial_{tt}^2 \boldsymbol{u}}^2.
  \end{equation}
  In a similar way, since $\partial_t \projRT{k}\boldsymbol{u}(t,\cdot) =\projRT{k} \partial_t \boldsymbol{u}(t,\cdot) $ by linearity, Jensen's inequality and the approximation property \eqref{eq:approx:RT:ele} of $\projRT{k}$ with $(j,q,s)=(0,2,m)$  give
  \begin{equation}\label{eq:estimate:timeTerm.2}
    \begin{alignedat}{2}
      \timeTerm{n}_2
      &=\Norm{\boldsymbol{L}^2(\Omega)}{\frac{1}{\tstep}\int_{t^{n-1}}^{t^n} ( \partial_t \boldsymbol{u}(t,\cdot) - \projRT{k}\partial_t \boldsymbol{u}(t,\cdot) ) dt }^2
      \leq
      \frac{1}{\tstep} \int_{t^{n-1}}^{t^n} \norm{\boldsymbol{L}^2(\Omega)}{\partial_t \boldsymbol{u}(t,\cdot) - \projRT{k}\partial_t \boldsymbol{u}(t,\cdot)}^2 dt \\
      &\lesssim
      \frac{1}{\tstep} \int_{t^{n-1}}^{t^n} \sum_{T \in \Th} h_T^{2(m+1)} \seminorm{\boldsymbol{H}^{m+1}(T)}{\partial_t \boldsymbol{u}(t,\cdot)}^2 \leq
      \frac{1}{\tstep} \sum_{T \in \Th} h_T^{2(m+1)} \norm{\boldsymbol{L}^2(t^{n-1},t^n;\boldsymbol{H}^{m+1}(T))}{\partial_t \boldsymbol{u}}^2.
    \end{alignedat}
  \end{equation}
  The conclusion follows plugging the estimates \eqref{eq:estimate:timeTerm.1} and \eqref{eq:estimate:timeTerm.2} into \eqref{eq:err:timeTerm} with $c(\delta)$ including the hidden constants in the estimates of $\timeTerm{n}_1$ and $\timeTerm{n}_2$.
\end{proof}

\subsection{Diffusive error}

The following bound for the diffusive error distinguishes between diffusion and convection dominated elements/faces in order to emphasize the different expected error reduction rates also at the local level.

\begin{lemma}[Regime-dependent estimate of the diffusive error]\label{lem:diffusive:error}
  Recalling the definition~\eqref{eq:consistency:error:diffusive} of $\diffTerm{n}(\cdot)$, under Assumption \ref{reg:ass:u}, for $n=1,\ldots,N$ and any $\delta >0 $,  it holds
  \begin{equation}\label{eq:diffTerm:estimate}
    \diffTerm{n}(\boldsymbol{e}_h^n)
    \le
    \delta\left[
      a_h(\boldsymbol{u}_h^n,\boldsymbol{e}_h^n)
      - a_h(\projRT{k}\boldsymbol{u}^n,\boldsymbol{e}_h^n)
      + \nu \norm{1,r,h}{\boldsymbol{e}_h^n}^{\overline{r}}
      + \seminorm{\boldsymbol{u}_h^n}{\boldsymbol{e}_h^n}^2
      \right]
    + c(\delta) \diffErr,
  \end{equation}
  where
  \begin{equation}\label{eq:diffErr}
    \begin{aligned}
      \diffErr
      &\coloneqq
      \sum_{T \in \Thd}  h_T^{\underline{r}m} \nu
      \norm{\boldsymbol{L}^r(\Omega)}{\boldsymbol{\nabla} \boldsymbol{u}^n}^{\bar{r}-2}
      \seminorm{\boldsymbol{W}^{m+1,r}(\TT)}{\boldsymbol{u}^n}^{\underline{r}}
      + \sum_{T \in \Thc} h_T^{2 m +1} \norm{\boldsymbol{L}^\infty(T)}{\boldsymbol{u}^n} \seminorm{\boldsymbol{H}^m(T)}{\boldsymbol{u}^n}^2
      \\
      &\quad
      +  \sum_{F \in \Fhd} h_F^{r m} \nu \seminorm{\boldsymbol{W}^{m+1,r}(\TF)}{\boldsymbol{u}^n}^r
      + \left[
        \sum_{F \in \Fhd} h_F^{r'(m+1)} \left(\nu^{-\frac{1}{\overline{r}}}\seminorm{\boldsymbol{W}^{m+1,r'}(\TF)}{\boldsymbol{\sigma}(\boldsymbol{\nabla} \boldsymbol{u}^n)}\right)^{r'}
        \right]^{\frac{\overline{r}'}{r'}}
      \\
      &\quad
      + \sum_{F \in \Fhc} h_F^{2m+1}
      (\nu \hat{K}_F^n)^{-1} \seminorm{\boldsymbol{H}^{m+\frac12}(\TF)}{\boldsymbol{\sigma}(\boldsymbol{\nabla} \boldsymbol{u}^n)}^2
      + \sum_{F \in \Fhc} h_F^{2m+1} \norm{L^\infty(F)}{\boldsymbol{u}^n \cdot \normalF} \seminorm{\boldsymbol{H}^{m+1}(\TF)}{\boldsymbol{u}^n}^2.
    \end{aligned}
  \end{equation}
\end{lemma}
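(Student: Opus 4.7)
The plan is to expand $\diffTerm{n}(\boldsymbol{e}_h^n)$ into a sum of local consistency errors via elementwise integration by parts plus the identification of projection residuals, and then bound each contribution with a regime-dependent argument on every element $T$ and face $F$.

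First, I would integrate by parts the divergence term in \eqref{eq:consistency:error:diffusive}. Since $\boldsymbol{e}_h^n\in\Zh{k}\subset\Hdiv{\Omega}$ has only tangential jumps, and since $\boldsymbol{\sigma}(\boldsymbol{\nabla}\boldsymbol{u}^n)$ is single-valued under Assumption~\ref{reg:ass:u}, I obtain
\[
-\int_\Omega\boldsymbol{\nabla}\cdot\boldsymbol{\sigma}(\boldsymbol{\nabla}\boldsymbol{u}^n)\cdot\boldsymbol{e}_h^n
=\int_\Omega\boldsymbol{\sigma}(\boldsymbol{\nabla}\boldsymbol{u}^n):\boldsymbol{\nabla}_h\boldsymbol{e}_h^n
-\sum_{F\in\Fh}\int_F\boldsymbol{\sigma}(\boldsymbol{\nabla}\boldsymbol{u}^n)\boldsymbol{n}_F\cdot\jump{\boldsymbol{e}_h^n}.
\]
Writing $\boldsymbol{\nabla}_h\boldsymbol{e}_h^n=\Gh\boldsymbol{e}_h^n+\Rh\boldsymbol{e}_h^n$, inserting an $L^2$-projection onto $\vectPoly{k}(\Th)$ so that the defining identity \eqref{eq:rF} of the lifting applies, and subtracting the definition \eqref{eq:ah} of $a_h$, I arrive at a splitting $\diffTerm{n}(\boldsymbol{e}_h^n)=\mathfrak{D}_1+\mathfrak{D}_2+\mathfrak{D}_3+\mathfrak{D}_4$, where $\mathfrak{D}_1\coloneqq\int_\Omega[\boldsymbol{\sigma}(\boldsymbol{\nabla}\boldsymbol{u}^n)-\boldsymbol{\sigma}(\Gh\projRT{k}\boldsymbol{u}^n)]:\Gh\boldsymbol{e}_h^n$ is the bulk consistency error, $\mathfrak{D}_2$ and $\mathfrak{D}_3$ collect the projection residuals coming respectively from the lifting and from the surface terms, and $\mathfrak{D}_4\coloneqq-\sum_{F\in\Fh}h_F^{1-r}\int_F\boldsymbol{\sigma}(\jump{\projRT{k}\boldsymbol{u}^n})\cdot\jump{\boldsymbol{e}_h^n}$ is the penalty contribution (observe that $\jump{\projRT{k}\boldsymbol{u}^n}$ is purely tangential because $\projRT{k}$ preserves normal traces and $\boldsymbol{u}^n$ is continuous).

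I would then bound each piece with a regime-dependent argument. For $\mathfrak{D}_1$, I combine the H\"older continuity \eqref{eq:sigma:holder:cont} of $\boldsymbol{\sigma}$ with the approximation property \eqref{eq:Gh:approx} and use a Young-type inequality to absorb a fraction of $a_h(\boldsymbol{u}_h^n,\boldsymbol{e}_h^n)-a_h(\projRT{k}\boldsymbol{u}^n,\boldsymbol{e}_h^n)$ together with $\nu\norm{1,r,h}{\boldsymbol{e}_h^n}^{\overline r}$: on $T\in\Thd$ this directly delivers the first sum in \eqref{eq:diffErr}, whereas on $T\in\Thc$ the local Reynolds bound \eqref{eq:ReyT>1new}, $\nu\hat K_T^n<\norm{\boldsymbol{L}^\infty(T)}{\boldsymbol{u}^n}h_T$, is exploited to trade the $L^r$-type estimate for a convection-scaled $L^2$ one, yielding the $h_T^{2m+1}$ contribution. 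The same dichotomy is then applied to $\mathfrak{D}_2$, $\mathfrak{D}_3$ and $\mathfrak{D}_4$: on $F\in\Fhd$ I bound the trace error of the stress in $L^{r'}$ (and in $L^r$ for $\mathfrak{D}_4$ via \eqref{eq:approx:RT:face}), with Young's inequality producing the weights $\nu^{-1/\overline r}$ needed so that the result is absorbable by $\delta\,\nu\norm{1,r,h}{\boldsymbol{e}_h^n}^{\overline r}$; on $F\in\Fhc$ I instead use the $L^2$ approximation, pair $\norm{\boldsymbol{L}^2(F)}{\jump{\boldsymbol{e}_h^n}}$ with $\gamma_F(\boldsymbol{u}_h^n)^{1/2}$, and invoke \eqref{eq:ReyF>1new.gamma-u}, i.e.\ $\nu\hat K_F^n\lesssim\gamma_F(\boldsymbol{u}_h^n)h_F$, to obtain the $h_F^{2m+1}$ rate whose leftover is absorbed into $\delta\,\seminorm{\boldsymbol{u}_h^n}{\boldsymbol{e}_h^n}^2$.

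The main obstacle is the careful bookkeeping of the Young absorptions: the monotonicity factor $(\norm{1,r,h}{\boldsymbol{u}_h^n}^r+\norm{1,r,h}{\projRT{k}\boldsymbol{u}^n}^r)^{(r-\underline r)/r}$ must combine with the H\"older weight in \eqref{eq:sigma:holder:cont} and with the approximation estimate to reconstruct the pre-factor $\nu\norm{\boldsymbol{L}^r(\Omega)}{\boldsymbol{\nabla}\boldsymbol{u}^n}^{\overline r-2}$ appearing in the first term of $\diffErr$, with the cases $r\le 2$ and $r>2$ having to be handled uniformly through \eqref{eq:sobolev:indexes}. Moreover, the regime switch on each face has to be performed so that no cross-term between $\Fhd$ and $\Fhc$ survives, which is precisely what gives the clean sum of local contributions in \eqref{eq:diffErr}.
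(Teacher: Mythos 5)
Your proposal is correct and follows essentially the same route as the paper: the identical integration-by-parts decomposition into a bulk consistency term, a face residual $\avg{\projL{k}\boldsymbol{\sigma}(\boldsymbol{\nabla}\boldsymbol{u}^n)-\boldsymbol{\sigma}(\boldsymbol{\nabla}\boldsymbol{u}^n)}$ (which the paper keeps as a single term where you bookkeep it as two), and the penalty term, each handled by the monotone-operator Young inequality together with the element/face regime dichotomy \eqref{eq:ReyT>1new}--\eqref{eq:ReyF>1new.gamma-u} and the approximation properties of $\Gh\circ\projRT{k}$ and $\projL{k}$. The only caveat is that your ``lifting'' and ``surface'' residuals are not individually small and must be recombined into the single average-minus-projection difference before estimating, which your wording already suggests you intend.
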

\begin{proof}
  Following the same argument as in \cite[Eq. (5.4)]{Beirao-da-Veiga.Di-Pietro.ea:24}, we obtain
  \begin{equation} \label{eq:estimate:D}
    \begin{aligned}
      \diffTerm{n} (\boldsymbol{e}_h^n) &=
      \int_\Omega \left(
      \boldsymbol{\sigma} ( \boldsymbol{\nabla} \boldsymbol{u}^n)- \boldsymbol{\sigma} ( \Gh \projRT{k} \boldsymbol{u}^n)
      \right) : \Gh \boldsymbol{e}_h^n \\
      &\qquad+\sum_{F \in \Fh} \int_F \left(
      \avg{\projL{k} \boldsymbol{\sigma}(\boldsymbol{\nabla} \boldsymbol{u}^n) - \boldsymbol{\sigma}(\boldsymbol{\nabla} \boldsymbol{u}^n) } \normalF
      \right) \cdot \jump{\boldsymbol{e}_h^n} \\
      &\qquad -\sum_{F \in \Fh} h_F^{1-r}\int_F \boldsymbol{\sigma}(\jump{\projRT{k} \boldsymbol{u}^n} ) \cdot \jump{\boldsymbol{e}_h^n}\eqcolon \diffTerm{n}_1 + \diffTerm{n}_2 + \diffTerm{n}_3.
    \end{aligned}
  \end{equation}
  \\
  \underline{\textbf{Estimate of $\diffTerm{n}_1$}.}
  For the first term, we write $\diffTerm{n}_1 = \sum_{T\in\Th}\diffTerm{n}_{1,T}$ and restrict our attention to a single element $T \in \Th$.
  By \cite[Lemma 2.1]{Beirao-da-Veiga.Di-Pietro.ea:24}, for any $\delta > 0$ it holds
  \begin{equation}\label{eq:estimate:D1.T}
    \diffTerm{n}_{1,T}
    \leq \delta \left[ \int_T \left( \boldsymbol{\sigma} (\Gh \boldsymbol{u}_h^n) - \boldsymbol{\sigma} (\Gh \projRT{k} \boldsymbol{u}^n) \right): \Gh \boldsymbol{e}_h^n \right] + c(\delta) \diffTerm{n}_{1,T,\mathrm{err}},
  \end{equation}
  where
  \begin{equation*}
    \diffTerm{n}_{1,T,\mathrm{err}} \coloneqq \nu \int_T \left( | \boldsymbol{\nabla} \boldsymbol{u}^n | + | \Gh \projRT{k} \boldsymbol{u}^n | \right)^{r-2} |\boldsymbol{\nabla} \boldsymbol{u}^n-\Gh \projRT{k} \boldsymbol{u}^n |^2.
  \end{equation*}
  For $T\in \Thd $, using the approximation and boundedness properties of $\Gh \circ \projRT{k}$ stated in Lemma~\ref{lem:approx:Gh}, we can apply the same argument that leads to \cite[Eq. (5.6) and Eq. (5.7)]{Beirao-da-Veiga.Di-Pietro.ea:24} to obtain
  \begin{equation}\label{eq:estimate:D1.T.err:diffusive}
    \begin{aligned}
      \diffTerm{n}_{1,T,\mathrm{err}}
      &\lesssim
      \nu h_T^{\underline{r}m} \norm{\boldsymbol{L}^r(\Omega)}{\boldsymbol{\nabla} \boldsymbol{u}^n}^{\bar{r}-2} \seminorm{\boldsymbol{W}^{m+1,r}(\TT)}{\boldsymbol{u}^n}^{\underline{r}} \, ,
    \end{aligned}
  \end{equation}
  where, in the case $r>2$, we also used the fact that $\nu^{\frac1r}\norm{\boldsymbol{L}^r(\Omega)}{\boldsymbol{\nabla} \boldsymbol{u}^n}$, $n=1,\ldots,N$, is uniformly bounded due to Assumption~\ref{reg:ass:u}.
  If $T\in \Thc$ and $r<2$, we have that
  \begin{equation} \label{eq:estimate:D1.T.err:advective:r<2}
    \begin{aligned}
      \diffTerm{n}_{1,T,\mathrm{err}}
      &\leq \nu \norm{L^\infty(T)}{|\boldsymbol{\nabla} \boldsymbol{u}^n |^{r-2}} \norm{\boldsymbol{L}^2(T)}{\boldsymbol{\nabla} \boldsymbol{u}^n-\Gh \projRT{k} \boldsymbol{u}^n }^2
      \\
      \overset{\eqref{eq:Gh:approx},\, \eqref{eq:hat.K.T.F},\,\eqref{eq:ReyT>1new}}&\lesssim
      h_T^{2 m +1} \norm{\boldsymbol{L}^\infty(T)}{\boldsymbol{u}^n} \seminorm{\boldsymbol{H}^{m+1}(T)}{\boldsymbol{u}^n}^2,
    \end{aligned}
  \end{equation}
  where the first inequality follows from the fact that $( | \boldsymbol{\nabla} \boldsymbol{u}^n | + | \Gh \projRT{k} \boldsymbol{u}^n | )^{r-2} \leq | \boldsymbol{\nabla} \boldsymbol{u}^n |^{r-2}$ since $r<2$.
  If $r\geq 2$, the same identical bound \eqref{eq:estimate:D1.T.err:advective:r<2} follows, but with a slightly different reasoning: for the first inequality, we use an $(\infty,1)$-H\"{o}lder inequality followed by a triangle inequality to write
  $\norm{L^\infty(T)}{(| \boldsymbol{\nabla} \boldsymbol{u}^n | + | \Gh \projRT{k} \boldsymbol{u}^n |)^{r-2}}
  \le\norm{L^\infty(T)}{| \boldsymbol{\nabla} \boldsymbol{u}^n |^{r-2}}
  + \norm{L^\infty(T)}{| \Gh \projRT{k} \boldsymbol{u}^n |^{r-2}}
  \lesssim \norm{L^\infty(\TT)}{| \boldsymbol{\nabla} \boldsymbol{u}^n |}^{r-2}$,
  where the conclusion follows noticing that $\Real^+ \ni x \mapsto x^{r-2} \in \Real$ is strictly increasing if $r > 2$ to write $\norm{L^\infty(T)}{|\cdot|^{r-2}} = \norm{L^\infty(T)}{\cdot}^{r-2}$ and invoking the $L^\infty$-boundedness \eqref{eq:Gh:stab} of $\Gh \circ \projRT{k}$.
  We conclude plugging \eqref{eq:estimate:D1.T.err:diffusive} and \eqref{eq:estimate:D1.T.err:advective:r<2} into \eqref{eq:estimate:D1.T}, thus obtaining
  \begin{equation}\label{eq:estimate:D1:final}
    \diffTerm{n}_1
    \le \delta \int_\Omega \left( \boldsymbol{\sigma} (\Gh \boldsymbol{u}_h^n) - \boldsymbol{\sigma} (\Gh \projRT{k} \boldsymbol{u}^n) \right): \Gh \boldsymbol{e}_h^n
    + c(\delta) \diffErr.
  \end{equation}
  \noindent\underline{\textbf{Estimate of $\diffTerm{n}_2$}.}
  We decompose the second term as $\diffTerm{n}_2 = \sum_{F \in \Fh} \diffTerm{n}_{2,F}$.
  For $F \in \Fhd$, we can apply verbatim the argument that leads to \cite[Eq.~(5.11)]{Beirao-da-Veiga.Di-Pietro.ea:24} and write
  \begin{equation}\label{eq:estimate:D2.F:diffusive}
    \diffTerm{n}_{2,F}
    \lesssim
    h_F^{m+1} \nu^{-\frac{1}{\overline{r}}} \seminorm{\boldsymbol{W}^{m+1,r'}(\TF)}{\boldsymbol{\sigma}(\boldsymbol{\nabla} \boldsymbol{u}^n)}
    ~ \nu^{\frac{1}{\overline{r}}}\big( h_F^{1-r}\norm{\boldsymbol{L}^r(F)}{\jump{\boldsymbol{e}_h^n}}^r \big)^{\frac{1}{r}}.
  \end{equation}
  For $F \in \Fhc$, first applying a $(2,\infty,2)$-H\"{o}lder inequality,
  then using $\norm{\boldsymbol{L}^{\infty}(F)}{\normalF} \leq 1$,
  and finally multiplying and dividing by $\nu^{\frac12}\gamma_F(\boldsymbol{u}_h^n)^{-\frac12}$ the resulting inequality, we get
  \[
  \begin{aligned}
    \diffTerm{n}_{2,F}
    &\lesssim
    \nu^{-\frac12} \nu^{\frac12}\gamma_F(\boldsymbol{u}_h^n)^{-\frac12}
    \norm{\boldsymbol{L}^{2}(F)}{\avg{\projL{k}\boldsymbol{\sigma}(\boldsymbol{\nabla} \boldsymbol{u}^n) - \boldsymbol{\sigma}(\boldsymbol{\nabla} \boldsymbol{u}^n)}}
    ~ \gamma_F(\boldsymbol{u}_h^n)^{\frac12}\norm{\boldsymbol{L}^2(F)}{\jump{\boldsymbol{e}_h^n}} \\
    \overset{\eqref{eq:ReyF>1new.gamma-u}}&\lesssim
    h_F^{m+\frac12} (\nu \hat{K}_F^n)^{-\frac12} \seminorm{\boldsymbol{H}^{m+\frac12}(\TF)}{\boldsymbol{\sigma}(\boldsymbol{\nabla} \boldsymbol{u}^n)}
    ~ \left( \gamma_F(\boldsymbol{u}_h^n) \norm{\boldsymbol{L}^2(F)}{\jump{\boldsymbol{e}_h^n}}^2 \right)^{\frac12},
  \end{aligned}
  \]
  where, in the last inequality, we have used also the approximation properties of the $L^2$-orthogonal projector $\projL{k}$.
  Using a $(r,r')$-H\"older inequality on the sum over $F\in\Fhd$, a Cauchy--Schwarz inequality on the sum over $F \in \Fhc$, and then applying generalized $(\overline{r},\overline{r}')$- and $(2,2)$-Young inequalities on $\Fhd$ and  on $\Fhc$, respectively, we obtain, for any $\delta > 0$,
  \begin{equation}\label{eq:estimate:D2:final}
    \diffTerm{n}_2
    \le \delta \left[ \nu \norm{1,r,h}{\boldsymbol{e}_h^n}^{\overline{r}} + \seminorm{\boldsymbol{u}_h^n}{\boldsymbol{e}_h^n}^2\right]
    + c(\delta) \diffErr.
  \end{equation}
  \noindent\underline{\textbf{Estimate of $\diffTerm{n}_3$}.} For the last term, observing that $\jump{\boldsymbol{u}^n}=0$ and then applying \cite[Lemma 2.1]{Beirao-da-Veiga.Di-Pietro.ea:24}, we can write, for any $\delta > 0$,
  \begin{equation}\label{eq:estimate:D3}
    \diffTerm{n}_3
    \le \delta
    \left[
      \sum_{F \in \Fh} h_F^{1-r}\int_ F \left( \boldsymbol{\sigma} (\jump{\boldsymbol{u}_h^n}) - \boldsymbol{\sigma}(\jump{\projRT{k} \boldsymbol{u}^n})  \right)\cdot \jump{\boldsymbol{e}_h^n}
      \right]  + c(\delta) \sum_{F \in \Fh} \diffTerm{n}_{3,F,\mathrm{err}},
  \end{equation}
  where
  \[
  \diffTerm{n}_{3,F,\mathrm{err}}
  \coloneqq \nu h_F^{1-r} \int_F \big| \jump{\projRT{k} \boldsymbol{u}^n} \big|^{r-2}
  \big| \jump{\projRT{k} \boldsymbol{u}^n-\boldsymbol{u}^n} \big|^2.
  \]
  For $F\in\Fhd$, using the fact that $\jump{\boldsymbol{u}^n} = \boldsymbol{0}$ to write $\jump{\projRT{k} \boldsymbol{u}^n} = \jump{\projRT{k} \boldsymbol{u}^n - \boldsymbol{u}^n}$, we have
  \begin{equation}\label{eq:estimate:D3.F.diffusive}
    \begin{aligned}
      \diffTerm{n}_{3,F,\mathrm{err}}
      = \nu h_F^{1-r} \norm{\boldsymbol{L}^r(F)}{\jump{\projRT{k} \boldsymbol{u}^n-\boldsymbol{u}^n}}^r
      \overset{\eqref{eq:approx:RT:face}}\lesssim
      h_F^{r m} \nu \seminorm{\boldsymbol{W}^{m+1,r}(\TF)}{\boldsymbol{u}^n}^r.
    \end{aligned}
  \end{equation}
  For $F \in \Fhc$, using an $(\infty,1)$-H\"older inequality, we get
  \begin{equation}\label{eq:estimate:D3.F.convective}
    \begin{aligned}
      \diffTerm{n}_{3,F,\mathrm{err}}
      &\leq \nu h_F^{1-r} \norm{\boldsymbol{L}^\infty(F)}{|\jump{\projRT{k} \boldsymbol{u}^n}|^{r-2}} \norm{\boldsymbol{L}^2(F)}{\jump{\projRT{k} \boldsymbol{u}^n -\boldsymbol{u}^n }}^2
      \\
      \overset{\eqref{eq:approx:RT:face}}&\lesssim
      h_F^{2m +1} \norm{L^\infty(F)}{\boldsymbol{u}^n \cdot \normalF} \seminorm{\boldsymbol{H}^{m+1}(\TF)}{\boldsymbol{u}^n}^2,
    \end{aligned}
  \end{equation}
  where we have written $\nu h_F^{1-r} \norm{\boldsymbol{L}^\infty(F)}{|\jump{\projRT{k} \boldsymbol{u}^n}|^{r-2}} \overset{\eqref{eq:hat.K.T.F}}\leq \nu h_F^{-1} \hat{K}_F^n \overset{\eqref{eq:ReyF>1new}}\lesssim \norm{L^\infty(F)}{\boldsymbol{u}^n \cdot \normalF} $ to obtain the second inequality.
  Gathering \eqref{eq:estimate:D3.F.diffusive} and \eqref{eq:estimate:D3.F.convective}, from \eqref{eq:estimate:D3} we deduce that
  \begin{equation}\label{eq:estimate:D3:final}
    \diffTerm{n}_3 \le \delta
    \sum_{F \in \Fh} h_F^{1-r}\int_ F \left( \boldsymbol{\sigma} (\jump{\boldsymbol{u}_h^n}) - \boldsymbol{\sigma}(\jump{\projRT{k} \boldsymbol{u}^n})  \right)\cdot \jump{\boldsymbol{e}_h^n}
    + c(\delta) \diffErr.
  \end{equation}
  \underline{\textbf{Conclusion}.} Plug \eqref{eq:estimate:D1:final}, \eqref{eq:estimate:D2:final} and \eqref{eq:estimate:D3:final} into \eqref{eq:estimate:D}.
\end{proof}

\begin{remark}[Non-degeneracy of the bound \eqref{eq:diffTerm:estimate}]
  The term $(\nu \hat{K}_F^n)^{-1}$ appearing in \eqref{eq:diffErr} does not indicate a potential degeneration of the estimate \eqref{eq:diffTerm:estimate} in the convection-dominated case.
  Indeed, that same term could be handled also using \eqref{eq:estimate:D2.F:diffusive} (as in the diffusive case), thus guaranteeing its boundedness. The presence of the factor $(\hat{K}_F^n)^{-1}$ can indeed be interpreted as a ``scaling'' associated to the higher order ``stress'' norm $\seminorm{\boldsymbol{H}^{m+\frac12}(\TF)}{\boldsymbol{\sigma}(\boldsymbol{\nabla} \boldsymbol{u}^n)}^2$, while the term $\nu^{-1}$ is balanced by the definition of $\boldsymbol{\sigma}(\boldsymbol{\nabla} \boldsymbol{u})$ which includes a factor $\nu$, see \eqref{def:sigma}. An analogous observation holds for the fourth addendum in \eqref{eq:diffErr}.
\end{remark}

\subsection{Convective error}

In Lemmas \ref{lem:convective:error} and \ref{lem:upw:stab:error} a positive constant $C_{\boldsymbol{u}}^\star$ will appear, which is of the form
\begin{equation}\label{eq:Custar}
  C_{\boldsymbol{u}}^\star = C
  \norm{\boldsymbol{L}^\infty(0,\tF; \boldsymbol{L}^\infty(\Omega))}{\boldsymbol{\nabla}\boldsymbol{u}}\, ,
\end{equation}
with $C$ only depending on the polynomial degree $k$ and the mesh regularity parameter. We take particular care in tracking this constant since it will appear in the (Gronwall related) exponential term of our final estimate.

\begin{lemma}[Estimate of the convective error]\label{lem:convective:error}
  Recalling the definition~\eqref{eq:consistency:error:convective} of $\convTerm{n}(\cdot)$, under Assumption \ref{reg:ass:u}, for $n=1,\ldots,N$ and any $\delta >0 $, it holds
  \begin{equation}\label{eq:convTerm:estimate}
    \convTerm{n}(\boldsymbol{e}_h^n)
    \le
    C_{\boldsymbol{u}}^\star \norm{\boldsymbol{L}^2(\Omega)}{\boldsymbol{e}_h^n}^2
    + \delta \left[\norm{\boldsymbol{L}^2(\Omega)}{\boldsymbol{e}_h^n}^2 
    + \seminorm{\boldsymbol{u}_h^n}{\boldsymbol{e}_h^n}^2
      \right]
    + c(\delta) \convErr,
  \end{equation}
  where the constant $C_{\boldsymbol{u}}^\star$ was introduced in \eqref{eq:Custar},
  while
  \begin{equation}\label{eq:convErr}
    \begin{aligned}
      \convErr
      &\coloneqq
      \sum_{T \in \Th} h_T^{2(m+1)} \norm{\boldsymbol{L}^\infty(T)}{\boldsymbol{\nabla} \boldsymbol{u}^n}^2  \seminorm{\boldsymbol{H}^{m+1}(T)}{\boldsymbol{u}^n}^2
      +
      \sum_{F \in \Fh} h_F^{2(m+1)} \norm{\boldsymbol{L}^\infty(\omega_F)}{\boldsymbol{\nabla} \boldsymbol{u}^n}^2 \seminorm{\boldsymbol{H}^{m+1}(\TF)}{\boldsymbol{u}^n}^2
      \\
      &\quad   +
      \sum_{F \in \Fh} h_F^{2m+1}\norm{L^\infty(F)}{\boldsymbol{u}^n \cdot \normalF}  \seminorm{\boldsymbol{H}^{m+1}(\TF)}{\boldsymbol{u}^n}^2.
    \end{aligned}
  \end{equation}
\end{lemma}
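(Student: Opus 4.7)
The plan is to start from the identity $\int_{\Omega}(\boldsymbol{u}^n\cdot\boldsymbol{\nabla})\boldsymbol{u}^n\cdot\boldsymbol{e}_h^n = c_h(\boldsymbol{u}^n,\boldsymbol{u}^n,\boldsymbol{e}_h^n)$, which holds because $\jump{\boldsymbol{u}^n}=\boldsymbol{0}$ on interior faces and $\boldsymbol{u}^n$ has zero trace on $\partial\Omega$. Adding and subtracting $c_h(\boldsymbol{u}_h^n,\boldsymbol{u}^n,\boldsymbol{e}_h^n)$ in \eqref{eq:consistency:error:convective} and using the identity $\boldsymbol{u}^n - \boldsymbol{u}_h^n = (\boldsymbol{u}^n - \projRT{k}\boldsymbol{u}^n) - \boldsymbol{e}_h^n$, I would decompose
\[
\convTerm{n}(\boldsymbol{e}_h^n) = \underbrace{-c_h(\boldsymbol{e}_h^n,\boldsymbol{u}^n,\boldsymbol{e}_h^n)}_{\mathfrak{C}_1} \;+\; \underbrace{c_h(\boldsymbol{u}^n - \projRT{k}\boldsymbol{u}^n,\boldsymbol{u}^n,\boldsymbol{e}_h^n)}_{\mathfrak{C}_2} \;+\; \underbrace{c_h(\boldsymbol{u}_h^n,\boldsymbol{u}^n - \projRT{k}\boldsymbol{u}^n,\boldsymbol{e}_h^n)}_{\mathfrak{C}_3}.
\]

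The terms $\mathfrak{C}_1$ and $\mathfrak{C}_2$ are the easy ones. In $\mathfrak{C}_1$, the face contribution in $c_h$ vanishes because $\jump{\boldsymbol{u}^n}=\boldsymbol{0}$, leaving $-\int_\Omega(\boldsymbol{e}_h^n\cdot\boldsymbol{\nabla})\boldsymbol{u}^n\cdot\boldsymbol{e}_h^n$, which an $(\infty,2,2)$-H\"older inequality bounds by exactly the $C_{\boldsymbol{u}}^\star\norm{\boldsymbol{L}^2(\Omega)}{\boldsymbol{e}_h^n}^2$ contribution of \eqref{eq:convTerm:estimate}, with $C_{\boldsymbol{u}}^\star$ as in \eqref{eq:Custar}. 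For $\mathfrak{C}_2$, the face part again vanishes; combining an elementwise $(\infty,2,2)$-H\"older inequality, the $L^2(T)$ approximation \eqref{eq:approx:RT:ele} applied with $(j,q,s)=(0,2,m+1)$, and a generalized Young inequality yields the first (element-based) contribution of $\convErr$ plus an absorbable $\delta\norm{\boldsymbol{L}^2(\Omega)}{\boldsymbol{e}_h^n}^2$.

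The core of the proof is bounding $\mathfrak{C}_3$, since its advecting field is the discrete velocity $\boldsymbol{u}_h^n$, which is not a priori bounded in $L^\infty$. I would split $\boldsymbol{u}_h^n = \boldsymbol{u}^n + (\boldsymbol{u}_h^n - \boldsymbol{u}^n)$, with $\boldsymbol{u}_h^n - \boldsymbol{u}^n = \boldsymbol{e}_h^n - (\boldsymbol{u}^n - \projRT{k}\boldsymbol{u}^n)$, to isolate a smooth-advection portion (contributing to $\convErr$ through $L^\infty$-norms of $\boldsymbol{u}^n$ and $\boldsymbol{\nabla}\boldsymbol{u}^n$) from an error-advection portion that must be absorbed on the left-hand side. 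On the volume piece, noting that $\jump{\boldsymbol{u}^n - \projRT{k}\boldsymbol{u}^n} = -\jump{\projRT{k}\boldsymbol{u}^n}$, I would apply elementwise integration by parts--justified by $\boldsymbol{\nabla}\cdot\boldsymbol{u}_h^n = 0$--to redistribute the gradient and then use \eqref{eq:approx:RT:ele}--\eqref{eq:approx:RT:face}, producing the second (face-based, $\norm{\boldsymbol{L}^\infty(\omega_F)}{\boldsymbol{\nabla}\boldsymbol{u}^n}^2$-weighted) term of $\convErr$. On the face piece of $c_h$, the bound $\norm{L^\infty(F)}{\boldsymbol{u}^n\cdot\normalF}\lesssim\gamma_F(\boldsymbol{u}_h^n)$ from \eqref{eq:gamma-u-bound} lets me factor $\gamma_F(\boldsymbol{u}_h^n)^{1/2}\norm{\boldsymbol{L}^2(F)}{\jump{\boldsymbol{e}_h^n}}$--absorbed into $\seminorm{\boldsymbol{u}_h^n}{\boldsymbol{e}_h^n}^2$ via Cauchy--Schwarz and Young--against $\gamma_F(\boldsymbol{u}_h^n)^{-1/2}\norm{\boldsymbol{L}^\infty(F)}{\boldsymbol{u}^n\cdot\normalF}^{1/2}\norm{\boldsymbol{L}^2(F)}{\jump{\projRT{k}\boldsymbol{u}^n}}$, which together with \eqref{eq:approx:RT:face} delivers the third term of $\convErr$.

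The main obstacle will be handling the $\boldsymbol{u}_h^n$ factor in $\mathfrak{C}_3$ so that the right-hand side is expressed purely in terms of norms of the continuous solution--no $\norm{\boldsymbol{L}^\infty}{\boldsymbol{u}_h^n}$ should survive--while simultaneously respecting the tightly coupled diffusive and upwind scales: the interpolation-error contributions must be apportioned between $\delta\norm{\boldsymbol{L}^2(\Omega)}{\boldsymbol{e}_h^n}^2$, $\delta\nu\norm{1,r,h}{\boldsymbol{e}_h^n}^{\overline{r}}$, and $\delta\seminorm{\boldsymbol{u}_h^n}{\boldsymbol{e}_h^n}^2$ through Young inequalities tuned to the appropriate conjugate exponents ($(\overline{r},\overline{r}')$ for contributions that will be absorbed into the diffusive quantity, and $(2,2)$ for those absorbed into the upwind seminorm and the $L^2$ error norm), and care must be taken that the constants appearing in front of $\norm{\boldsymbol{L}^2(\Omega)}{\boldsymbol{e}_h^n}^2$ outside the $C_{\boldsymbol{u}}^\star$ contribution remain of size $\delta$, so that the final Gronwall step later in the convergence analysis is unaffected.
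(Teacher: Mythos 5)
Your decomposition is an algebraically exact regrouping of the paper's: the paper adds and subtracts $c_h(\boldsymbol{u}^n,\projRT{k}\boldsymbol{u}^n,\boldsymbol{e}_h^n)$ (modifying the \emph{transported} slot first), obtaining $\convTerm{n}_1=-c_h(\boldsymbol{u}^n,\boldsymbol{e}_h^n,\boldsymbol{u}^n-\projRT{k}\boldsymbol{u}^n)$, $\convTerm{n}_2=-c_h(\boldsymbol{e}_h^n,\projRT{k}\boldsymbol{u}^n,\boldsymbol{e}_h^n)$, $\convTerm{n}_3=c_h(\boldsymbol{u}^n-\projRT{k}\boldsymbol{u}^n,\projRT{k}\boldsymbol{u}^n,\boldsymbol{e}_h^n)$, whereas you modify the \emph{advecting} slot first. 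Expanding your $\mathfrak{C}_3$ via $\boldsymbol{u}_h^n=\boldsymbol{u}^n+\boldsymbol{e}_h^n-(\boldsymbol{u}^n-\projRT{k}\boldsymbol{u}^n)$ recovers exactly the paper's three terms (e.g.\ $\mathfrak{C}_1$ plus the $\boldsymbol{e}_h^n$-advected piece of $\mathfrak{C}_3$ equals $\convTerm{n}_2$, since $\projRT{k}\boldsymbol{u}^n=\boldsymbol{u}^n-(\boldsymbol{u}^n-\projRT{k}\boldsymbol{u}^n)$). The paper's ordering is the more economical one: because the transported field in every residual term is $\projRT{k}\boldsymbol{u}^n$ or the interpolation error (both $\boldsymbol{W}^{1,\infty}$-controlled by $\boldsymbol{u}^n$), the discrete velocity never survives as an advecting field and no second splitting is needed; your ordering buys a slightly more symmetric bookkeeping at the price of creating $\mathfrak{C}_3$, the term you rightly flag as the crux.

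The gap is precisely in your sketch of $\mathfrak{C}_3$. The elementwise integration by parts you propose (using $\boldsymbol{\nabla}\cdot\boldsymbol{u}_h^n=0$) merely moves the gradient onto $\boldsymbol{e}_h^n$ and leaves $\boldsymbol{u}_h^n$ paired with $\boldsymbol{\nabla}\boldsymbol{e}_h^n$ and the interpolation error, so the uncontrolled $\boldsymbol{L}^\infty$-norm of $\boldsymbol{u}_h^n$ is still present at that stage; likewise, your face bound presumes the advecting coefficient is already $\boldsymbol{u}^n\cdot\normalF$, and the inequality \eqref{eq:gamma-u-bound} goes the wrong way to replace $\gamma_F(\boldsymbol{u}_h^n)$ by $\norm{L^\infty(F)}{\boldsymbol{u}^n\cdot\normalF}$ in a residual factor. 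The splitting of $\boldsymbol{u}_h^n$ must be performed \emph{first}, in both the volume and face contributions; after that, the $\boldsymbol{u}^n$-advected piece is handled by antisymmetry plus the mean-value trick ($\bar{\boldsymbol{u}}_T^n$, inverse estimate) in the volume and the $\gamma_F$ bound on the faces (yielding the third term of $\convErr$), the $\boldsymbol{e}_h^n$-advected piece is absorbed into $C_{\boldsymbol{u}}^\star\norm{\boldsymbol{L}^2(\Omega)}{\boldsymbol{e}_h^n}^2$ using the $\boldsymbol{W}^{1,\infty}$-stability of the interpolation error and inverse-trace inequalities (the $h_F$ powers cancel exactly, as in the paper's bound of $\upwTerm{n}_{1,F}$), and the doubly-interpolation-error piece gives the first two terms of $\convErr$. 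With these steps made explicit your argument closes and coincides with the paper's up to regrouping, with the same $C_{\boldsymbol{u}}^\star$ entering the Gr\"onwall exponential.
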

\begin{proof}
  Noticing that $\jump{\boldsymbol{u}^n} = 0$ by the regularity assumptions and recalling the definition \eqref{eq:ch} of $c_h$, the first term in the expression \eqref{eq:consistency:error:convective} of $\convTerm{n} (\boldsymbol{e}_h^n)$ is equal to $c_h(\boldsymbol{u}^n, \boldsymbol{u}^n, \boldsymbol{e}_h^n)$.
  Accounting for this fact,
  adding and subtracting $c_h(\boldsymbol{u}^n,\projRT{k} \boldsymbol{u}^n, \boldsymbol{e}_h^n)$,
  and using \eqref{eq:ch:anti-symmetry} to write $c_h(\boldsymbol{u}^n, \boldsymbol{u}^n-\projRT{k}\boldsymbol{u}^n , \boldsymbol{e}_h^n )= - c_h(\boldsymbol{u}^n, \boldsymbol{e}_h^n , \boldsymbol{u}^n-\projRT{k}\boldsymbol{u}^n)$ after recalling that $\boldsymbol{\nabla } \cdot \boldsymbol{u}^n = 0$ by \eqref{eq:continuous.problem.mass},
  we split $\convTerm{n} (\boldsymbol{e}_h^n)$ as follows:
  \begin{equation}\label{eq:estimate:C}
    \begin{aligned}
      \convTerm{n}(\boldsymbol{e}_h^n)
      &=
      - c_h(\boldsymbol{u}^n, \boldsymbol{e}_h^n , \boldsymbol{u}^n-\projRT{k}\boldsymbol{u}^n)
      - c_h(\boldsymbol{e}_h^n, \projRT{k}\boldsymbol{u}^n ,\boldsymbol{e}_h^n )
      + c_h(\boldsymbol{u}^n-\projRT{k}\boldsymbol{u}^n, \projRT{k}\boldsymbol{u}^n,\boldsymbol{e}_h^n ) \\
      &\eqqcolon \convTerm{n}_1 + \convTerm{n}_2 + \convTerm{n}_3.
    \end{aligned}
  \end{equation}
  \smallskip
  \underline{\textbf{Estimate of $\convTerm{n}_1$}.}
  Recalling \eqref{eq:ch}, we write $\convTerm{n}_1 = \sum_{T\in\Th} \convTerm{n}_{1,T} + \sum_{F \in \Fhi} \convTerm{n}_{1,F}$
  with $\convTerm{n}_{1,T} \coloneqq \int_T (\boldsymbol{u}^n \cdot \boldsymbol{\nabla}) \boldsymbol{e}_h^n \cdot (\boldsymbol{u}^n-\projRT{k}\boldsymbol{u}^n)$ and
  $\convTerm{n}_{1,F} \coloneqq \int_F (\boldsymbol{u}^n \cdot \normalF) \jump{\boldsymbol{e}_h^n} \cdot \avg{\boldsymbol{u}^n-\projRT{k}\boldsymbol{u}^n}$.

  For $\convTerm{n}_{1,T}$, following the argument in \cite[Lemma 5.1]{Han.Hou:21}, after defining $\bar{\boldsymbol{u}}_T^n \coloneqq \frac{1}{|T|} \int_T \boldsymbol{u}^n$, we obtain
  \[
  \begin{aligned}
    \convTerm{n}_{1,T} &= \int_T ((\boldsymbol{u}^n - \bar{\boldsymbol{u}}_T^n )  \cdot \boldsymbol{\nabla}) \boldsymbol{e}_h^n \cdot (\boldsymbol{u}^n-\projRTEle{k}\boldsymbol{u}^n )
    \\
    &\leq
    \norm{\boldsymbol{L}^\infty(T)}{\boldsymbol{u}^n - \bar{\boldsymbol{u}}_T^n}
    \norm{\boldsymbol{L}^2(T)}{\boldsymbol{\nabla }\boldsymbol{e}_h^n}
    \norm{\boldsymbol{L}^2(T)}{\boldsymbol{u}^n-\projRTEle{k}\boldsymbol{u}^n} \\
    &\lesssim
    h_T^{m+1} \norm{\boldsymbol{L}^\infty(T)}{\boldsymbol{\nabla} \boldsymbol{u}^n}
    \seminorm{\boldsymbol{H}^{m+1}(T)}{\boldsymbol{u}^n} \norm{\boldsymbol{L}^2(T)}{\boldsymbol{e}_h^n},
  \end{aligned}
  \]
  where, in the last inequality, we have used the approximation property of $\bar{\boldsymbol{u}}_T^n$ (see, e.g., \cite[Eq.~(5.2)]{Han.Hou:21}),
  an inverse estimate on $\norm{\boldsymbol{L}^2(T)}{\boldsymbol{\nabla }\boldsymbol{e}_h^n}$, and the approximation property \eqref{eq:approx:RT:ele} of $\projRTEle{k}$ with $(j,q,s) = (0,2,m+1)$.

  Moving to $\convTerm{n}_{1,F}$, we use a $(\infty,2,2)$-H\"older inequality and \eqref{eq:gamma-u-bound} to write
  \begin{equation}\label{eq:C1F}
    \begin{aligned}
      \convTerm{n}_{1,F} &\leq \norm{L^\infty(F)}{\boldsymbol{u}^n \cdot \normalF}
      \norm{\boldsymbol{L}^{2}(F)}{\avg{\boldsymbol{u}^n-\projRT{k}\boldsymbol{u}^n }}
      \norm{\boldsymbol{L}^2(F)}{\jump{\boldsymbol{e}_h^n}} \\
      &\lesssim
      h_F^{m+\frac12}
      \norm{L^\infty(F)}{\boldsymbol{u}^n \cdot \normalF}^{\frac12}  \seminorm{\boldsymbol{H}^{m+1}(\TF)}{\boldsymbol{u}^n} \left(
      \gamma_F(\boldsymbol{u}_h^n) \norm{\boldsymbol{L}^2(F)}{\jump{\boldsymbol{e}_h^n}}^2
      \right)^{\frac{1}{2}}.
    \end{aligned}
  \end{equation}
  Gathering the above estimate, we use first Cauchy–Schwarz inequalities on the sums over faces and elements, then we apply a generalized $(2, 2)$-Young's inequality to get, for any $\delta >0$ the following bound for $\convTerm{n}_1$:
  \[
  \convTerm{n}_1
  \le
  \delta \left[ \norm{\boldsymbol{L}^2(\Omega)}{\boldsymbol{e}_h^n}^2 
  + \seminorm{\boldsymbol{u}_h^n}{\boldsymbol{e}_h^n}^2 \right]
  + c(\delta) \convErr.
  \]
  \\
  \underline{\textbf{Estimate of $\convTerm{n}_2$}.} After splitting $\convTerm{n}_2$ into the sum of element and face contributions, we apply a $(2,\infty,2)$-H\"older inequality, the $\boldsymbol{W}^{1,\infty}$-continuity and the trace approximation property \eqref{eq:approx:RT:face} of $\projRTEle{k}$ with $(q,s)=(\infty,1)$, and the discrete trace inequality to write
  \[
  \begin{alignedat}{2}
    \convTerm{n}_{2,T}
    &\leq \norm{\boldsymbol{L}^\infty(T)}{\boldsymbol{\nabla} \projRTEle{k} \boldsymbol{u}^n}\norm{\boldsymbol{L}^2(T)}{\boldsymbol{e}_h^n}^2
    \lesssim \norm{\boldsymbol{L}^\infty(T)}{\boldsymbol{\nabla} \boldsymbol{u}^n}\norm{\boldsymbol{L}^2(T)}{\boldsymbol{e}_h^n}^2
    &\qquad& \forall T \in \Th,
    \\
    \convTerm{n}_{2,F} &\lesssim \norm{\boldsymbol{L}^{\infty}(F)}{\jump{\boldsymbol{u}^n-\projRT{k}\boldsymbol{u}^n }}
    h_F^{-1}\norm{\boldsymbol{L}^2(\TF)}{\boldsymbol{e}_h^n}^2 \lesssim
    \norm{\boldsymbol{L}^\infty(\omega_F)}{\boldsymbol{\nabla} \boldsymbol{u}^n}
    \norm{\boldsymbol{L}^2(\TF)}{\boldsymbol{e}_h^n}^2
    &\qquad& \forall F \in \Fh.
  \end{alignedat}
  \]
  Therefore, we get $\convTerm{n}_2 \le C_{\boldsymbol{u}}^\star \norm{\boldsymbol{L}^2(\Omega)}{\boldsymbol{e}_h^n}^2$.
  \medskip\\
  \underline{\textbf{Estimate of $\convTerm{n}_3$}.} We split the last term as $\convTerm{n}_3 = \sum_{T \in \Th} \convTerm{n}_{3,T}+ \sum_{F\in \Fh} \convTerm{n}_{3,F}$ and repeat here what we have done for $\convTerm{n}_{2}$.
  In particular, to estimate the term $\convTerm{n}_{3,T}$, we apply a $(2,\infty,2)$-H\"older inequality together with the $\boldsymbol{W}^{1,\infty}$-continuity of $\projRTEle{k}$ and the approximation property \eqref{eq:approx:RT:ele} with $(j,q,s)=(0,2,m+1)$. This yields, for all $T \in\ \Th$,
  \begin{equation}\label{eq:estimate:C3.T}
    \convTerm{n}_{3,T} = \int_T ((\boldsymbol{u}^n -\projRTEle{k}\boldsymbol{u}^n ) \cdot \boldsymbol{\nabla}) \projRTEle{k}\boldsymbol{u}^n \cdot \boldsymbol{e}_h^n \lesssim h_T^{m+1} \norm{\boldsymbol{L}^\infty(T)}{\boldsymbol{\nabla} \boldsymbol{u}^n}
    \seminorm{\boldsymbol{H}^{m+1}(T)}{\boldsymbol{u}^n} \norm{\boldsymbol{L}^2(T)}{\boldsymbol{e}_h^n}.
  \end{equation}
  For $\convTerm{n}_{3,F}$, we insert $\jump{\boldsymbol{u}^n} = \boldsymbol{0}$ in the integral and we use the bound $\norm{\boldsymbol{L}^{\infty}(F)}{\normalF} \leq 1$ to write, for all $F \in \Fh$,
  \begin{equation}\label{eq:estimate:C3.F}
    \begin{aligned}
      \convTerm{n}_{3,F}
      &\leq
      \norm{\boldsymbol{L}^\infty(F)}{\boldsymbol{u}^n -\projRT{k}\boldsymbol{u}^n}
      \norm{\boldsymbol{L}^2(F)}{\jump{ \boldsymbol{u}^n -\projRT{k}\boldsymbol{u}^n}}
      \norm{\boldsymbol{L}^2(F)}{ \avg{\boldsymbol{e}_h^n} }\\
      &\lesssim h_F^{m+1}
      \norm{\boldsymbol{L}^\infty(\omega_F)}{\boldsymbol{\nabla} \boldsymbol{u}^n}
      \seminorm{\boldsymbol{H}^{m+1}(\TF)}{\boldsymbol{u}^n} \norm{\boldsymbol{L}^2(\TF)}{\boldsymbol{e}_h^n},
    \end{aligned}
  \end{equation}
  where the final inequality follows from applying the approximation property \eqref{eq:approx:RT:face} twice, first with $(q,s)=(\infty,1)$ and then with $(q,s)=(2,m+1)$, along with discrete trace inequality applied to $\norm{\boldsymbol{L}^2(F)}{ \avg{\boldsymbol{e}_h^n}}$.

  Plugging \eqref{eq:estimate:C3.T} and \eqref{eq:estimate:C3.F} into the expression of $\convTerm{n}_3$ leads to the following bound, valid for any $\delta > 0$:
  \[
  \convTerm{n}_{3}
  \le \delta \norm{\boldsymbol{L}^2(\Omega)}{\boldsymbol{e}_h^n}^2
  + c(\delta) \convErr.
  \]
  \\
  \underline{\textbf{Conclusion}.}
  By plugging the estimates for $\convTerm{n}_1$, $\convTerm{n}_2$ and $\convTerm{n}_3$ into \eqref{eq:estimate:C} we get the desired conclusion.
\end{proof}

\subsection{Upwind stabilization error}

\begin{lemma}[Estimate of the upwind error]\label{lem:upw:stab:error}
  Recalling the definition~\eqref{eq:consistency:error:upwind} of $\upwTerm{n}(\cdot)$, under Assumption \ref{reg:ass:u}, for $n=1,\ldots,N$ and any $\delta >0$, it holds
  \begin{equation}\label{eq:upwTerm:estimate}
    \upwTerm{n}(\boldsymbol{e}_h^n)
    \le
    C_{\boldsymbol{u}}^\star \norm{\boldsymbol{L}^2(\Omega)}{\boldsymbol{e}_h^n}^2
    +\delta \left[
      \norm{\boldsymbol{L}^2(\Omega)}{\boldsymbol{e}_h^n}^2 
      + \seminorm{\boldsymbol{u}_h^n}{\boldsymbol{e}_h^n}^2   \right]
    + c(\delta) \upwErr,
  \end{equation}
  where the constant $C_{\boldsymbol{u}}^\star$ was introduced in \eqref{eq:Custar}, and
  \begin{equation}\label{eq:upwErr}
    \begin{aligned}
      \upwErr
      &\coloneqq
      \sum_{F\in\Fh} h_F^{2m+1}
      \max(C_F,h_F \norm{\boldsymbol{L}^\infty(\omega_F)}{\boldsymbol{\nabla} \boldsymbol{u}^n}^2)
      \seminorm{\boldsymbol{H}^{m+1}(\TF)}{\boldsymbol{u}^n}^2
      \\
      &\quad
      + \sum_{F\in\Fh} h_F^{2m+1} \norm{L^{\infty}(F)}{\boldsymbol{u}^n \cdot \normalF} \seminorm{\boldsymbol{H}^{m+1}(\TF)}{\boldsymbol{u}^n}^2.
    \end{aligned}
  \end{equation}
\end{lemma}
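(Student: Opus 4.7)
The argument would mirror the analyses of $\diffTerm{n}_3$ in Lemma~\ref{lem:diffusive:error} and of $\convTerm{n}_{1,F}$ in Lemma~\ref{lem:convective:error}. Starting from the definition \eqref{eq:consistency:error:upwind} of $\upwTerm{n}$ and exploiting $\jump{\boldsymbol{u}^n}=\boldsymbol{0}$ afforded by Assumption~\ref{reg:ass:u}, I would first insert $\boldsymbol{u}^n$ to put the integrand into approximation form,
\[
\upwTerm{n}(\boldsymbol{e}_h^n)
= -\sum_{F\in\Fhi}\gamma_F(\boldsymbol{u}_h^n)\int_F \jump{\projRT{k}\boldsymbol{u}^n-\boldsymbol{u}^n}\cdot\jump{\boldsymbol{e}_h^n}.
\]
A face-wise Cauchy--Schwarz inequality, splitting $\gamma_F(\boldsymbol{u}_h^n)=\gamma_F(\boldsymbol{u}_h^n)^{1/2}\gamma_F(\boldsymbol{u}_h^n)^{1/2}$ so that one factor combines with $\|\jump{\boldsymbol{e}_h^n}\|_{\boldsymbol{L}^2(F)}$ to reconstruct $\seminorm{\boldsymbol{u}_h^n}{\boldsymbol{e}_h^n}$ (cf.\ \eqref{eq:convective.seminorm}) after a further Cauchy--Schwarz on the sum over $F$, followed by a generalized Young inequality, yields
\[
|\upwTerm{n}(\boldsymbol{e}_h^n)|
\le \delta\,\seminorm{\boldsymbol{u}_h^n}{\boldsymbol{e}_h^n}^2
+ c(\delta)\sum_{F\in\Fhi}\gamma_F(\boldsymbol{u}_h^n)\,\|\jump{\projRT{k}\boldsymbol{u}^n-\boldsymbol{u}^n}\|_{\boldsymbol{L}^2(F)}^2.
\]
The trace-approximation estimate \eqref{eq:approx:RT:face} with $(q,s)=(2,m+1)$ bounds each jump factor by $h_F^{2m+1}\seminorm{\boldsymbol{H}^{m+1}(\TF)}{\boldsymbol{u}^n}^2$.

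The central task becomes the estimate of $\gamma_F(\boldsymbol{u}_h^n)$, which depends on the discrete unknown. Writing $\boldsymbol{u}_h^n=\projRT{k}\boldsymbol{u}^n+\boldsymbol{e}_h^n$ and using the triangle inequality,
\[
\gamma_F(\boldsymbol{u}_h^n)
\lesssim C_F + \|\boldsymbol{u}^n\cdot\normalF\|_{L^\infty(F)}
+ \|(\projRT{k}\boldsymbol{u}^n-\boldsymbol{u}^n)\cdot\normalF\|_{L^\infty(F)}
+ \|\boldsymbol{e}_h^n\cdot\normalF\|_{L^\infty(F)},
\]
the projection term being controlled by $h_F\|\boldsymbol{\nabla}\boldsymbol{u}^n\|_{\boldsymbol{L}^\infty(\omega_F)}$ via \eqref{eq:approx:RT:face} with $(q,s)=(\infty,1)$. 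After multiplication by $h_F^{2m+1}\seminorm{\boldsymbol{H}^{m+1}(\TF)}{\boldsymbol{u}^n}^2$, the first three pieces are regrouped (using an elementary Young step to merge $C_F$ and $h_F\|\boldsymbol{\nabla}\boldsymbol{u}^n\|_{\boldsymbol{L}^\infty(\omega_F)}$ into the squared $\max$-form of \eqref{eq:upwErr}) and yield exactly the two data sums defining $\upwErr$.

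The main obstacle is the residual contribution $\|\boldsymbol{e}_h^n\cdot\normalF\|_{L^\infty(F)}\cdot h_F^{2m+1}\seminorm{\boldsymbol{H}^{m+1}(\TF)}{\boldsymbol{u}^n}^2$, which must account for the Gronwall-type term $C_{\boldsymbol{u}}^\star\|\boldsymbol{e}_h^n\|_{\boldsymbol{L}^2(\Omega)}^2$ with $C_{\boldsymbol{u}}^\star$ as in \eqref{eq:Custar}. The plan is to invoke an inverse trace inequality $\|\boldsymbol{e}_h^n\|_{\boldsymbol{L}^\infty(F)}\lesssim h_F^{-d/2}\|\boldsymbol{e}_h^n\|_{\boldsymbol{L}^2(\omega_F)}$ and a Young inequality calibrated with the weight $\|\boldsymbol{\nabla}\boldsymbol{u}^n\|_{\boldsymbol{L}^\infty(\omega_F)}$: one side of Young contributes $C_{\boldsymbol{u}}^\star\|\boldsymbol{e}_h^n\|_{\boldsymbol{L}^2(\omega_F)}^2$, which collapses to $C_{\boldsymbol{u}}^\star\|\boldsymbol{e}_h^n\|_{\boldsymbol{L}^2(\Omega)}^2$ by finite overlap of the patches $\omega_F$ thanks to mesh regularity, while the other side gives $\delta$-absorbable contributions (to be hidden into $\nu\norm{1,r,h}{\boldsymbol{e}_h^n}^{\overline{r}}$, $\seminorm{\boldsymbol{u}_h^n}{\boldsymbol{e}_h^n}^2$, or $\norm{\boldsymbol{L}^2(\Omega)}{\boldsymbol{e}_h^n}^2$) together with data residuals matching the structure of \eqref{eq:upwErr}. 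This Gronwall step is directly analogous to the treatment of $\convTerm{n}_2$ in Lemma~\ref{lem:convective:error}, the subtlety being that $\gamma_F$ depends on the unknown $\boldsymbol{u}_h^n$ and must be linearised around $\boldsymbol{u}^n$ at the price of a residual quadratic in $\boldsymbol{e}_h^n$ only after invoking the inverse estimate.
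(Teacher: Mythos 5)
Your opening moves coincide with the paper's (reduce to $\sum_{F\in\Fhi}\gamma_F(\boldsymbol{u}_h^n)\int_F\jump{\boldsymbol{u}^n-\projRT{k}\boldsymbol{u}^n}\cdot\jump{\boldsymbol{e}_h^n}$ using $\jump{\boldsymbol{u}^n}=\boldsymbol{0}$, then decompose $\boldsymbol{u}_h^n$ inside $\gamma_F$), but the order of operations is different and this is where a genuine gap appears. By applying Cauchy--Schwarz and Young \emph{first}, you commit to the residual $\sum_F\gamma_F(\boldsymbol{u}_h^n)\norm{\boldsymbol{L}^2(F)}{\jump{\projRT{k}\boldsymbol{u}^n-\boldsymbol{u}^n}}^2\lesssim\sum_F\gamma_F(\boldsymbol{u}_h^n)\,h_F^{2m+1}\seminorm{\boldsymbol{H}^{m+1}(\TF)}{\boldsymbol{u}^n}^2$, i.e.\ the approximation factor is already \emph{squared} at full order $m+1$. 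The piece of $\gamma_F(\boldsymbol{u}_h^n)$ coming from $\norm{L^\infty(F)}{\boldsymbol{e}_h^n\cdot\normalF}$ then yields a term that is only \emph{linear} in $\boldsymbol{e}_h^n$, namely $\lesssim h_F^{2m+1-d/2}\norm{\boldsymbol{L}^2(\TF)}{\boldsymbol{e}_h^n}\seminorm{\boldsymbol{H}^{m+1}(\TF)}{\boldsymbol{u}^n}^2$ after your inverse-trace step. Converting this into the quadratic term $C_{\boldsymbol{u}}^\star\norm{\boldsymbol{L}^2(\Omega)}{\boldsymbol{e}_h^n}^2$ (or a $\delta$-absorbable one) via Young necessarily squares the data factor a second time, producing a residual of the form $h_F^{4m+2-d}\seminorm{\boldsymbol{H}^{m+1}(\TF)}{\boldsymbol{u}^n}^4$ (possibly weighted by $\norm{\boldsymbol{L}^\infty(\omega_F)}{\boldsymbol{\nabla}\boldsymbol{u}^n}^{-1}$, which may degenerate). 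This is \emph{not} of the structure of \eqref{eq:upwErr}, so your claim that the leftover "data residuals match the structure of \eqref{eq:upwErr}" does not hold; at best you prove a variant of the lemma with an additional quartic consistency term (which happens not to degrade the final rates for $m\ge 1$, $d\le 3$, but is not the stated result).

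The paper sidesteps this by keeping the bilinear structure intact: it writes $\boldsymbol{u}_h^n=\boldsymbol{e}_h^n+(\projRT{k}\boldsymbol{u}^n-\boldsymbol{u}^n)+\boldsymbol{u}^n$ inside $\gamma_F$ \emph{before} any Cauchy--Schwarz, and for the $\boldsymbol{e}_h^n$-piece it estimates the remaining factor $\int_F\jump{\boldsymbol{u}^n-\projRT{k}\boldsymbol{u}^n}\cdot\jump{\boldsymbol{e}_h^n}$ using only the \emph{first-order} approximation property \eqref{eq:approx:RT:face} with $(q,s)=(\infty,1)$, i.e.\ $\norm{\boldsymbol{L}^\infty(F)}{\jump{\boldsymbol{u}^n-\projRT{k}\boldsymbol{u}^n}}\lesssim h_F\norm{\boldsymbol{L}^\infty(\omega_F)}{\boldsymbol{\nabla}\boldsymbol{u}^n}$. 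The single factor $h_F$ exactly cancels the two factors $h_F^{-1/2}$ from the inverse Lebesgue and discrete trace inequalities, leaving precisely $\norm{\boldsymbol{L}^\infty(\omega_F)}{\boldsymbol{\nabla}\boldsymbol{u}^n}\norm{\boldsymbol{L}^2(\TF)}{\boldsymbol{e}_h^n}^2$, which sums to $C_{\boldsymbol{u}}^\star\norm{\boldsymbol{L}^2(\Omega)}{\boldsymbol{e}_h^n}^2$ with no data residual at all; only the $(\projRT{k}\boldsymbol{u}^n-\boldsymbol{u}^n)$-, $\boldsymbol{u}^n$- and $C_F$-pieces of $\gamma_F$ generate the terms collected in \eqref{eq:upwErr}. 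If you want to repair your argument, postpone the Cauchy--Schwarz/Young step until after the decomposition of $\gamma_F$, and treat the $\boldsymbol{e}_h^n$-piece with the $s=1$ approximation estimate rather than the $s=m+1$ one.
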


\begin{proof}
  Recalling the definitions \eqref{eq:consistency:error:upwind} of $\upwTerm{n}$ and \eqref{eq:jh} of $j_h$, and noticing that $\jump{\boldsymbol{u}^n} = \boldsymbol{0}$, we write $\upwTerm{n}(\boldsymbol{e}_h^n) = \sum_{F \in \Fhi} \upwTerm{n}_F$ with $\upwTerm{n}_F \coloneq \gamma_F(\boldsymbol{u}_h^n) \int_F \jump{ \boldsymbol{u}^n - \projRT{k} \boldsymbol{u}^n } \cdot \jump{\boldsymbol{e}_h^n }$.
  Let a face $F \in \Fhi$ be fixed.
  We distinguish between the cases $\gamma_F(\boldsymbol{u}_h^n) = \norm{L^\infty(F)}{\boldsymbol{u}_h^n \cdot \normalF}$ and $\gamma_F(\boldsymbol{u}_h^n) = C_F$.
  \medskip
  \\
  \underline{\textbf{Case $\gamma_F(\boldsymbol{u}_h^n) = \norm{L^\infty(F)}{\boldsymbol{u}_h^n \cdot \normalF}$}.}
  After decomposing
  $\boldsymbol{u}_h^n = \boldsymbol{e}_h^n + ( \projRT{k}\boldsymbol{u}^n-\boldsymbol{u}^n ) + \boldsymbol{u}^n$ and applying a triangle inequality, we bound $\upwTerm{n}_F$ as follows: $\upwTerm{n}_F \leq \upwTerm{n}_{1,F} + \upwTerm{n}_{2,F}+\upwTerm{n}_{3,F}$, where
  \begin{equation*}
    \begin{aligned}
      \upwTerm{n}_{1,F} &\coloneqq  \norm{L^{\infty}(F)}{\boldsymbol{e}_h^n \cdot \normalF} \int_F \jump{ \boldsymbol{u}^n - \projRT{k} \boldsymbol{u}^n } \cdot \jump{\boldsymbol{e}_h^n }, \\
      \upwTerm{n}_{2,F} &\coloneqq \norm{L^{\infty}(F)}{(\projRT{k}\boldsymbol{u}^n-\boldsymbol{u}^n) \cdot \normalF} \int_F \jump{ \boldsymbol{u}^n - \projRT{k} \boldsymbol{u}^n } \cdot \jump{\boldsymbol{e}_h^n }, \\
      \upwTerm{n}_{3,F} &\coloneqq \norm{L^{\infty}(F)}{\boldsymbol{u}^n \cdot \normalF} \int_F \jump{ \boldsymbol{u}^n - \projRT{k} \boldsymbol{u}^n } \cdot \jump{\boldsymbol{e}_h^n }.
    \end{aligned}
  \end{equation*}
  For the first term,
  we start by noticing that, using an inverse Lebesgue inequality (see for instance \cite[Lemma 1.25]{Di-Pietro.Droniou:20}) followed by a discrete trace inequality,
  \[
  \norm{L^\infty(F)}{\boldsymbol{e}_h^n \cdot \normalF}
  \lesssim |F|^{-\frac12} {\norm{L^2(F)}{\boldsymbol{e}_h^n \cdot \normalF}}
  \lesssim |F|^{-\frac12} h_F^{-\frac12} {\norm{\boldsymbol{L}^2(\TF)}{\boldsymbol{e}_h^n}}.
  \]
  We next write, using a $(2,\infty,2)$-H\"{o}lder inequality,
  \[
  \begin{aligned}
    \int_F \jump{ \boldsymbol{u}^n - \projRT{k} \boldsymbol{u}^n } \cdot \jump{\boldsymbol{e}_h^n }
    &\le |F|^{\frac12} {\norm{\boldsymbol{L}^\infty(F)}{\jump{ \boldsymbol{u}^n - \projRT{k} \boldsymbol{u}^n }}} {\norm{\boldsymbol{L}^2(F)}{\jump{\boldsymbol{e}_h^n} }}
    \\
    &\lesssim |F|^{\frac12} h_F^{\frac12} \norm{\boldsymbol{L}^\infty(\omega_F)}{\boldsymbol{\nabla} \boldsymbol{u}^n} \norm{\boldsymbol{L}^2(\TF)}{\boldsymbol{e}_h^n},
  \end{aligned}
  \]
  where the conclusion follows using a triangle inequality together with the approximation properties \eqref{eq:approx:RT:face} of $\projRTEle{k}$ with $(q,s) = (\infty,1)$ for the second factor and triangle and discrete trace inequalities for the third.
  In conclusion,
  \[
  \upwTerm{n}_{1,F} \le C_{\boldsymbol{u}}^\star \norm{\boldsymbol{L}^2(\Omega)}{\boldsymbol{e}_h^n}^2
  \]
  with $C_{\boldsymbol{u}}^\star$ introduced in \eqref{eq:Custar}.

  In a similar way, using the fact that $\norm{\boldsymbol{L}^\infty(F)}{\normalF} \le 1$ and a Cauchy--Schwarz inequality, we obtain, for the second term,
  \begin{equation*}
    \begin{aligned}
      \upwTerm{n}_{2,F}
      & \leq \norm{\boldsymbol{L}^{\infty}(F)}{\projRT{k}\boldsymbol{u}^n-\boldsymbol{u}^n} \norm{\boldsymbol{L}^2(F)}{\jump{\projRT{k}\boldsymbol{u}^n-\boldsymbol{u}^n}} \norm{\boldsymbol{L}^2(F)}{\jump{\boldsymbol{e}_h^n}} \\
      & \lesssim h_F^{m+1} \norm{\boldsymbol{L}^{\infty}(\omega_F)}{\boldsymbol{\nabla} \boldsymbol{u}^n} \seminorm{\boldsymbol{H}^{m+1}(\TF)}{\boldsymbol{u}^n}\norm{\boldsymbol{L}^2(\TF)}{\boldsymbol{e}_h^n},
    \end{aligned}
  \end{equation*}
  where the conclusion follows using \eqref{eq:approx:RT:face} with $(q,s) = (\infty,1)$ for the first factor and with $(q,s) = (2,m+1)$ for the second and triangle plus trace inequalities for the third factor.

  For the last term, applying the same argument that leads to \eqref{eq:C1F}, we obtain
  \[
  \upwTerm{n}_{3,F}
  \lesssim
  \norm{L^{\infty}(F)}{\boldsymbol{u}^n \cdot \normalF}^{\frac12} h_F^{m+\frac12}\seminorm{\boldsymbol{H}^{m+1}(\TF)}{\boldsymbol{u}^n}
  \left( \gamma_F(\boldsymbol{u}_h^n) \norm{\boldsymbol{L}^2(F)}{\jump{\boldsymbol{e}_h^n}}^2\right)^{\frac{1}{2}} \, .
  \]
  \noindent
  \underline{\textbf{Case $\gamma_F(\boldsymbol{u}_h^n) = C_F$}.}
  For this case, we simply apply the Cauchy--Schwarz inequality to $\upwTerm{n}_{F}$ followed by the approximation property \eqref{eq:approx:RT:face} to obtain
  \begin{equation*}
    \upwTerm{n}_{F} \leq C_F^{\frac12} \norm{\boldsymbol{L}^2(F)}{\jump{ \boldsymbol{u}^n - \projRT{k} \boldsymbol{u}^n }} C_F^{\frac12}\norm{\boldsymbol{L}^2(F)}{\jump{\boldsymbol{e}_h^n }}
    \lesssim C_F^{\frac12} h_F^{m+\frac12} \seminorm{\boldsymbol{H}^{m+1}(\TF)}{\boldsymbol{u}^n} C_F^{\frac12}\norm{\boldsymbol{L}^2(F)}{\jump{\boldsymbol{e}_h^n }}.
  \end{equation*}
  \underline{\textbf{Conclusion.}} The conclusion follows easily gathering the above estimates and using appropriate Young-type inequalities on the sums over faces.
  Note that the first addendum in the definition of $\upwErr$, equation \eqref{eq:upwErr}, gathers terms coming both from the bound for $\upwTerm{n}_{2,F}$ and the bound for case $\gamma_F(\boldsymbol{u}_h^n) = C_F$.
\end{proof}

\subsection{Error estimate for $r \ge 2$}

We preliminarily note that, in the case $r \ge 2$, bound \eqref{eq:ah:coercivity} immediately implies
\begin{equation}\label{eq:ah:coercivity:bis}
a_h (\boldsymbol{w}_h,\boldsymbol{e}_h) - a_h(\boldsymbol{z}_h,\boldsymbol{e}_h) 
\gtrsim \nu  \norm{1,r,h}{\boldsymbol{e}_h}^{r} \, . 
\end{equation}
\begin{theorem}[Velocity error estimate $r \geq 2 $]\label{thm:error.estimate}
  Under the regularity Assumption \ref{reg:ass:u}, and further assuming $\tstep < \frac{1}{5 C_{\boldsymbol{u}}^\star}$, with $C_{\boldsymbol{u}}^\star$ introduced in \eqref{eq:Custar}, for $ r \geq 2 $ it holds,
  \begin{equation}\label{eq:error.estimate}
    \norm{\boldsymbol{L}^2(\Omega)}{\boldsymbol{e}_h^N}^2
    + \sum_{n=1}^N \tstep \left(
    \nu \norm{1,r,h}{\boldsymbol{e}_h^n}^{r}
    + \seminorm{\boldsymbol{u}_h^n}{\boldsymbol{e}_h^n}^2
    \right)
    \lesssim
    \sum_{n=1}^N \tstep \Err,
  \end{equation}
  where, recalling \eqref{eq:timeErr}, \eqref{eq:diffErr}, \eqref{eq:convErr}, and \eqref{eq:upwErr}, the consistency error is defined by
  \[
  \Err \coloneq \timeErr + \diffErr + \convErr + \upwErr,
  \]
  and the hidden constant in \eqref{eq:error.estimate} depends linearly on $\exp\left[ \frac{5C_{\boldsymbol{u}}^\star}{1 - 5 C_{\boldsymbol{u}}^\star \tstep} \tF \right]$.
\end{theorem}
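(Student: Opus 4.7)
The plan is to test the error equation \eqref{eq:error:equation} with $\boldsymbol{v}_h = \boldsymbol{e}_h^n$, which is admissible since $\boldsymbol{e}_h^n \in \Zh{k}$, and then simplify each term on the left-hand side. The time derivative contribution is handled via the elementary identity $(\boldsymbol{a}-\boldsymbol{b}) \cdot \boldsymbol{a} \ge \frac12 (|\boldsymbol{a}|^2 - |\boldsymbol{b}|^2)$, producing the telescoping quantity $\frac{1}{2\tstep} (\norm{\boldsymbol{L}^2(\Omega)}{\boldsymbol{e}_h^n}^2 - \norm{\boldsymbol{L}^2(\Omega)}{\boldsymbol{e}_h^{n-1}}^2)$; the convective term $c_h(\boldsymbol{u}_h^n,\boldsymbol{e}_h^n,\boldsymbol{e}_h^n)$ vanishes by the non-dissipativity property \eqref{eq:ch:non-dissipativity}; and the upwind term reduces to $\seminorm{\boldsymbol{u}_h^n}{\boldsymbol{e}_h^n}^2$ by the definition \eqref{eq:convective.seminorm}. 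The diffusive difference $a_h(\boldsymbol{u}_h^n,\boldsymbol{e}_h^n) - a_h(\projRT{k}\boldsymbol{u}^n,\boldsymbol{e}_h^n)$ is kept as is for the moment.

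Next, I would apply Lemmas \ref{lem:time:derivative:error}--\ref{lem:upw:stab:error} to the right-hand side with a common parameter $\delta > 0$. The diffusive estimate \eqref{eq:diffTerm:estimate} produces a $\delta$-multiple of the very same $a_h(\boldsymbol{u}_h^n,\boldsymbol{e}_h^n) - a_h(\projRT{k}\boldsymbol{u}^n,\boldsymbol{e}_h^n)$, which can be moved back to the left-hand side. The coercivity \eqref{eq:ah:coercivity} then provides a lower bound for the remaining diffusive contribution; in the delicate case $r<2$, where the prefactor involves a negative power of $\norm{1,r,h}{\boldsymbol{u}_h^n}^r + \norm{1,r,h}{\projRT{k}\boldsymbol{u}^n}^r$, the uniform upper bound on this sum afforded by Lemma~\ref{eq:stability} together with Assumption~\ref{reg:ass:u} ensures the clean lower bound $a_h(\boldsymbol{u}_h^n,\boldsymbol{e}_h^n) - a_h(\projRT{k}\boldsymbol{u}^n,\boldsymbol{e}_h^n) \gtrsim \nu \norm{1,r,h}{\boldsymbol{e}_h^n}^{\overline{r}}$. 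The remaining $\delta$-multiples of $\nu \norm{1,r,h}{\boldsymbol{e}_h^n}^{\overline{r}}$, $\seminorm{\boldsymbol{u}_h^n}{\boldsymbol{e}_h^n}^2$ and $\norm{\boldsymbol{L}^2(\Omega)}{\boldsymbol{e}_h^n}^2$ returned by the four lemmas are absorbed by the corresponding contributions on the left upon choosing $\delta$ small enough, leaving only the $C_{\boldsymbol{u}}^\star \norm{\boldsymbol{L}^2(\Omega)}{\boldsymbol{e}_h^n}^2$ terms from Lemmas~\ref{lem:convective:error} and \ref{lem:upw:stab:error} unabsorbed.

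Finally, I would multiply the resulting one-step inequality by $\tstep$ and sum over $n=1,\ldots,M$ for arbitrary $M \le N$. The time derivative contribution telescopes and, since $\boldsymbol{e}_h^0 = \boldsymbol{u}_h^0 - \projRT{k}\boldsymbol{u}^0 = \boldsymbol{0}$, the initial term vanishes, delivering an inequality of the form
\[
\norm{\boldsymbol{L}^2(\Omega)}{\boldsymbol{e}_h^M}^2 + \sum_{n=1}^M \tstep \left( \nu \norm{1,r,h}{\boldsymbol{e}_h^n}^{\overline{r}} + \seminorm{\boldsymbol{u}_h^n}{\boldsymbol{e}_h^n}^2 \right) \lesssim \sum_{n=1}^M \tstep C_{\boldsymbol{u}}^\star \norm{\boldsymbol{L}^2(\Omega)}{\boldsymbol{e}_h^n}^2 + \sum_{n=1}^M \tstep \Err.
\]
The main obstacle is then closing this by discrete Gronwall: the assumption $\tstep < (C_{\boldsymbol{u}}^\star)^{-1}$ is precisely what is needed to absorb the contribution $\tstep C_{\boldsymbol{u}}^\star \norm{\boldsymbol{L}^2(\Omega)}{\boldsymbol{e}_h^M}^2$ (the last term of the first sum on the right-hand side) into the left-hand side, after which the standard discrete Gronwall inequality applied to the sequence $(\norm{\boldsymbol{L}^2(\Omega)}{\boldsymbol{e}_h^n}^2)_n$ yields the estimate \eqref{eq:error.estimate}.
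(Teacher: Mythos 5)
Your proposal is correct and follows essentially the same route as the paper: test the error equation \eqref{eq:error:equation} with $\boldsymbol{v}_h = \boldsymbol{e}_h^n$, use the telescoping identity for the time increment and the coercivity \eqref{eq:ah:coercivity}, absorb the $\delta$-multiples returned by Lemmas \ref{lem:time:derivative:error}--\ref{lem:upw:stab:error} into the left-hand side, sum over $n$ using $\boldsymbol{e}_h^0 = \boldsymbol{0}$, and close with a discrete Gr\"onwall argument under $\tstep < (C_{\boldsymbol{u}}^\star)^{-1}$. The only cosmetic differences are that the paper performs the absorption of the $C_{\boldsymbol{u}}^\star$-contribution at the single-step level (yielding the explicit factors $1-(\delta + C_{\boldsymbol{u}}^\star)\tstep$) before summing, whereas you sum first, and that you make explicit the boundedness of the coercivity prefactor in the case $r<2$, which the paper leaves implicit.
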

\begin{proof}
  We bound from below the left-hand side of \eqref{eq:error:equation} with $\boldsymbol{v}_h = \boldsymbol{e}_h^n$.
  Using the relation \eqref{eq:relation.unsteady} for the first term, we obtain
  \[
  \int_{\Omega} \frac{\boldsymbol{e}_h^n - \boldsymbol{e}_h^{n-1}}{\tstep} \cdot \boldsymbol{e}_h^n \geq \frac{1}{2 \tstep} \norm{\boldsymbol{L}^2(\Omega)}{\boldsymbol{e}_h^n}^2 - \frac{1}{2 \tstep} \norm{\boldsymbol{L}^2(\Omega)}{\boldsymbol{e}_h^{n-1}}^2.
  \]
  From \eqref{eq:ah:coercivity:bis}, after writing $\alpha = \frac{\alpha}{2} + \frac{\alpha}{2}$ and making the hidden constant in $\gtrsim$ explicit, we get
  \[
  a_h(\boldsymbol{u}_h^n,\boldsymbol{e}_h^n) - a_h(\projRT{k}\boldsymbol{u}^n,\boldsymbol{e}_h^n) \geq
  \frac12 \left( a_h(\boldsymbol{u}_h^n,\boldsymbol{e}_h^n) - a_h(\projRT{k}\boldsymbol{u}^n,\boldsymbol{e}_h^n) \right) + \frac{C_{a}}{2} \nu \norm{1,r,h}{\boldsymbol{e}_h^n}^{r}.
  \]
  From \eqref{eq:ch:non-dissipativity} and the definition~\eqref{eq:convective.seminorm} of the seminorm $\seminorm{\boldsymbol{u}_h^n}{\cdot}$, we deduce that the convective term vanishes and that $j_h(\boldsymbol{u}_h^n;\boldsymbol{e}_h^n,\boldsymbol{e}_h^n) = \seminorm{\boldsymbol{u}_h^n}{\boldsymbol{e}_h^n}^2$.
  Using the above facts, we obtain the following lower bound for the right-hand side of \eqref{eq:error:equation} with $\boldsymbol{v}_h = \boldsymbol{e}_h^n$:
  \begin{multline}\label{eq:error:lower:bound}
    \timeTerm{n} (\boldsymbol{e}_h^n)
    + \diffTerm{n} (\boldsymbol{e}_h^n)
    + \convTerm{n} (\boldsymbol{e}_h^n)
    + \upwTerm{n} (\boldsymbol{e}_h^n)
    \geq \frac{1}{2 \tstep} \norm{\boldsymbol{L}^2(\Omega)}{\boldsymbol{e}_h^n}^2 - \frac{1}{2 \tstep} \norm{\boldsymbol{L}^2(\Omega)}{\boldsymbol{e}_h^{n-1}}^2
    \\
    + \frac12 \left( a_h(\boldsymbol{u}_h^n,\boldsymbol{e}_h^n) - a_h(\projRT{k}\boldsymbol{u}^n,\boldsymbol{e}_h^n) \right) + \frac12 C_a \nu \norm{1,r,h}{\boldsymbol{e}_h^n}^{r} + \seminorm{\boldsymbol{u}_h^n}{\boldsymbol{e}_h^n}^2.
  \end{multline}
  Plugging our estimates \eqref{eq:timeTerm:estimate} of $\timeTerm{n}(\boldsymbol{e}_h^n)$,
  \eqref{eq:diffTerm:estimate} of $\diffTerm{n}(\boldsymbol{e}_h^n)$,
  \eqref{eq:convTerm:estimate} of $\convTerm{n}(\boldsymbol{e}_h^n)$,
  and \eqref{eq:upwTerm:estimate} of $\upwTerm{n}(\boldsymbol{e}_h^n)$ into \eqref{eq:error:lower:bound} and rearranging terms, we obtain
  \begin{multline*}
    \left( \frac{1}{2 \tstep} - (3 \delta + 2C_{\boldsymbol{u}}^\star)  \right) \norm{\boldsymbol{L}^2(\Omega)}{\boldsymbol{e}_h^n}^2
    +\left(\frac12 C_a - \delta \right) \nu \norm{1,r,h}{\boldsymbol{e}_h^n}^{r}
    +\left( 1 - 3 \delta \right) \seminorm{\boldsymbol{u}_h^n}{\boldsymbol{e}_h^n}^2 \\
    +\left( \frac12 - \delta \right) \left( a_h(\boldsymbol{u}_h^n,\boldsymbol{e}_h^n) - a_h(\projRT{k}\boldsymbol{u}^n,\boldsymbol{e}_h^n) \right)
    \leq 
     \frac{1}{2 \tstep} \norm{\boldsymbol{L}^2(\Omega)}{\boldsymbol{e}_h^{n-1}}^2
    + 4 c(\delta) \Err.
  \end{multline*}
  Taking $\delta < \frac12$, noticing that $ a_h(\boldsymbol{u}_h^n,\boldsymbol{e}_h^n) - a_h(\projRT{k}\boldsymbol{u}^n,\boldsymbol{e}_h^n) \geq C_a \nu \norm{1,r,h}{\boldsymbol{e}_h^n}^{r} \geq 0$, and setting $\tilde{C} \coloneqq \min \left\{ \frac12 C_a , 1 \right\}$, we find, after multiplying by $2 \tstep$,
  \begin{multline*}
    \left(1 - \tstep(6 \delta + 4C_{\boldsymbol{u}}^\star)  \right) \norm{\boldsymbol{L}^2(\Omega)}{\boldsymbol{e}_h^n}^2
    +\left( 2 \tilde{C} - 6 \delta \right) \tstep \left( \nu \norm{1,r,h}{\boldsymbol{e}_h^n}^{r}
   + \seminorm{\boldsymbol{u}_h^n}{\boldsymbol{e}_h^n}^2 \right)
    \leq 
    \norm{\boldsymbol{L}^2(\Omega)}{\boldsymbol{e}_h^{n-1}}^2
    + 8 c(\delta) \tstep \Err.
  \end{multline*}
  We simplify the inequality by choosing $\delta = \hat{\delta} \coloneqq \min \left\{\frac12, \frac{C_{\boldsymbol{u}}^\star}{6}, \frac{\tilde{C}}{6}\right\}$, and setting $\hat{C} \coloneqq c(\hat{\delta})$. This yields 
  \begin{equation*}
    \left(1 - 5 C_{\boldsymbol{u}}^\star \tstep \right) \norm{\boldsymbol{L}^2(\Omega)}{\boldsymbol{e}_h^n}^2
    +\tilde{C} \tstep \left( \nu \norm{1,r,h}{\boldsymbol{e}_h^n}^{r}
   + \seminorm{\boldsymbol{u}_h^n}{\boldsymbol{e}_h^n}^2 \right)
    \leq 
    \norm{\boldsymbol{L}^2(\Omega)}{\boldsymbol{e}_h^{n-1}}^2
    + 8 \hat{C} \tstep \Err.
  \end{equation*}
  Under the assumption $\tstep < \frac{1}{5 C_{\boldsymbol{u}}^\star }$, we can divide both side by $1 - 5 C_{\boldsymbol{u}}^\star \tstep  > 0$ to get
  \begin{equation*}
    \norm{\boldsymbol{L}^2(\Omega)}{\boldsymbol{e}_h^n}^2
    +\frac{\tilde{C} }{1 - 5 C_{\boldsymbol{u}}^\star \tstep} \tstep \left( \nu \norm{1,r,h}{\boldsymbol{e}_h^n}^{r}
   + \seminorm{\boldsymbol{u}_h^n}{\boldsymbol{e}_h^n}^2 \right)
    \leq 
    \frac{1}{1 - 5 C_{\boldsymbol{u}}^\star \tstep}\norm{\boldsymbol{L}^2(\Omega)}{\boldsymbol{e}_h^{n-1}}^2
    +\frac{ 8 \hat{C}}{1 - 5 C_{\boldsymbol{u}}^\star \tstep} \tstep \Err.
  \end{equation*}
  Summing the above inequality for $n=1,\ldots,N$ after subtracting $\norm{\boldsymbol{L}^2(\Omega)}{\boldsymbol{e}_h^{n-1}}^2$ from both sides, telescoping the appropriate terms, and noticing that $\boldsymbol{e}_h^0 = \boldsymbol{0}$ by choice of the discrete initial condition on the velocity, we obtain
  \begin{multline*}
    \norm{\boldsymbol{L}^2(\Omega)}{\boldsymbol{e}_h^N}^2
    +\frac{\tilde{C}}{1 - 5 C_{\boldsymbol{u}}^\star \tstep}
    \sum_{n=1}^N \tstep \left( \nu \norm{1,r,h}{\boldsymbol{e}_h^n}^{r}
   + \seminorm{\boldsymbol{u}_h^n}{\boldsymbol{e}_h^n}^2 \right) \\
    \leq 
    \frac{5 C_{\boldsymbol{u}}^\star \tstep }{1 - 5 C_{\boldsymbol{u}}^\star \tstep}
    \sum_{n=1}^N \norm{\boldsymbol{L}^2(\Omega)}{\boldsymbol{e}_h^{n-1}}^2
    +\frac{ 8 \hat{C}}{1 - 5 C_{\boldsymbol{u}}^\star \tstep} \sum_{n=1}^N \tstep \Err.
  \end{multline*}
  Using a generalized version of the discrete Gr\"onwall inequality (see \cite[Lemma 5.1]{Heywood.Rannacher:90}), we arrive at
  \begin{equation*}
    \norm{\boldsymbol{L}^2(\Omega)}{\boldsymbol{e}_h^N}^2
    +\frac{\tilde{C}}{1 - 5 C_{\boldsymbol{u}}^\star \tstep}
    \sum_{n=1}^N \tstep \left( \nu \norm{1,r,h}{\boldsymbol{e}_h^n}^{r}
   + \seminorm{\boldsymbol{u}_h^n}{\boldsymbol{e}_h^n}^2 \right)
    \leq 
    \exp\left[ \frac{5C_{\boldsymbol{u}}^\star}{1 - 5 C_{\boldsymbol{u}}^\star \tstep} \tF \right]
    \frac{ 8 \hat{C}}{1 - 5 C_{\boldsymbol{u}}^\star \tstep} \sum_{n=1}^N \tstep \Err.
  \end{equation*}
  Finally, taking the minimum of the constants on the left-hand side, we infer that
  \begin{multline*}
  \min \left\{ 1, \frac{\tilde{C}}{1 - 5 C_{\boldsymbol{u}}^\star \tstep}  \right\}
  \left[
  \norm{\boldsymbol{L}^2(\Omega)}{\boldsymbol{e}_h^N}^2 + \sum_{n=1}^N \tstep \left( \nu \norm{1,r,h}{\boldsymbol{e}_h^n}^{r}
   + \seminorm{\boldsymbol{u}_h^n}{\boldsymbol{e}_h^n}^2 \right)
  \right] \\
    \leq 
    \exp\left[ \frac{5C_{\boldsymbol{u}}^\star}{1 - 5 C_{\boldsymbol{u}}^\star \tstep} \tF \right]
    \frac{ 8 \hat{C}}{1 - 5 C_{\boldsymbol{u}}^\star \tstep} \sum_{n=1}^N \tstep \Err.
  \end{multline*}
  Dividing by $\min \left\{ 1, \frac{\tilde{C}}{1 - 5 C_{\boldsymbol{u}}^\star \tstep}  \right\} > 0 $, we conclude that
  \begin{multline*}
  \norm{\boldsymbol{L}^2(\Omega)}{\boldsymbol{e}_h^N}^2 + \sum_{n=1}^N \tstep \left( \nu \norm{1,r,h}{\boldsymbol{e}_h^n}^{r}
   + \seminorm{\boldsymbol{u}_h^n}{\boldsymbol{e}_h^n}^2 \right) \\
    \leq 
    \exp\left[ \frac{5C_{\boldsymbol{u}}^\star}{1 - 5 C_{\boldsymbol{u}}^\star \tstep} \tF \right]
    \frac{ 8 \hat{C}}{\left(\min \left\{ 1, \frac{\tilde{C}}{1 - 5 C_{\boldsymbol{u}}^\star \tstep} \right\}\right)(1 - 5 C_{\boldsymbol{u}}^\star \tstep)} \sum_{n=1}^N \tstep \Err. \qedhere
  \end{multline*}
\end{proof}
\begin{remark}[Convergence rates]\label{rem:conv-rates}
  Simplified convergence rates for the terms in the right-hand side of \eqref{eq:error.estimate} and the various contributions to the error are reported in Table~\ref{tab:convergence.rates}.
  Specifically, we consider the situation of diffusion- or convection-dominated flows, in which, respectively, $(\Thd,\Fhd) = (\Th,\Fh)$ and $(\Thc,\Fhc) = (\Th,\Fh)$ for all $n \in \{1, \ldots, N\}$.
  In the linear case $r = 2$, consistently with the findings of \cite{Han.Hou:21}, for all the square roots of the error measures we obtain convergence in $\tstep + h^m$ in the diffusion-dominated regime and $\tstep + h^{m+\frac12}$ in the convection-dominated regime.
\end{remark}
\begin{remark}[The case $r>2$]
  In the case $r>2$, we notice that the diffusive error term $\nu \| \cdot \|_{1,r,h}$ carries a different exponent when compared to the other error terms in the left hand side of \eqref{eq:error.estimate}.
  In addition, as becomes clear inspecting Table~\ref{tab:convergence.rates}, for $r>2$ the presence of an approximation of the stress $\sigma$ in the error analysis leads to sub-optimal estimates in diffusion dominated cases whenever $m > r'/(2-r')$
  (at least when compared to the expected bounds for conforming Finite Elements). This is a known drawback of non-conforming methods in this type of problems \cite{Beirao-da-Veiga.Di-Pietro.ea:24}; see \cite{Di-Pietro.Droniou.ea:21} for techniques to derive improved error estimates.
\end{remark}
\begin{remark}[Classical error]\label{rem:class-error}
  Combining the discrete error estimate in Theorem \ref{thm:error.estimate} with interpolation results for $\BDM^k$ elements it is trivial to obtain analogous error bounds for the error
  $$
  \norm{\boldsymbol{L}^2(\Omega)}{\boldsymbol{u}_h^N - \boldsymbol{u}^N}^2
  + \sum_{n=1}^N \tstep \left(
  \nu \norm{1,r,h}{\boldsymbol{u}_h^n - \boldsymbol{u}^n}^{r}
  + \seminorm{\boldsymbol{u}_h^n}{\boldsymbol{u}_h^n - \boldsymbol{u}^n}^2
  \right)
  $$
  with the same convergence rates.
\end{remark}

\subsection{Error estimate for $1< r < 2$}

Whenever $1<r<2$, bound \eqref{eq:ah:coercivity} does not immediately imply \eqref{eq:ah:coercivity:bis}. Indeed, the critical point is that, in order to obtain \eqref{eq:ah:coercivity:bis} one would need a uniform $L^\infty$-in-time type bound
\begin{equation}\label{eq:not-true}
    \norm{1,r,h}{\boldsymbol{u}_h^n} \lesssim 1  \qquad \forall n=1,2,..,N \, ,
\end{equation}
which does not seem to be guaranteed in general (compare with the stability bound in Lemma \ref{eq:stability}). 
Therefore, we will resort to prove control of a specific (weaker) diffusive jump norm and show that we are nevertheless able to derive equivalent error estimates.

\begin{lemma}[Jump control of $a_h$ for $1<r < 2$]\label{lem:mono:ah:rs2}
  If $1<r < 2$, then, for all $n=1,2,..,N$, it holds
  \[
    a_h (\boldsymbol{u}_h^n,\boldsymbol{e}_h^n) - a_h(\projRT{k}\boldsymbol{u}^n,\boldsymbol{e}_h^n)
    \gtrsim
    \nu
    \seminorm{\star}{\boldsymbol{e}_h^n}^2 . \label{eq:ah:coercivity:rs2}
  \]
  where 
  \begin{equation}\label{eq:norm:min}
  \seminorm{\star}{\boldsymbol{e}_h^n}^2 \coloneqq \sum_{F \in \Fh} \int_F \min \{ h_F^{1-r} | \jump{\boldsymbol{e}_h^n} |^{r} , h_F^{-1} | \jump{\boldsymbol{e}_h^n} |^{2} \}
  \end{equation}
\end{lemma}
\begin{proof}
From the definition of $a_h$ and by \eqref{eq:sigma:monotonicity}, after observing that $ \left( \boldsymbol{\sigma}(\Gh \boldsymbol{u}_h^n) - \boldsymbol{\sigma}(\Gh \projRT{k} \boldsymbol{u}^n)  \right) : \Gh \boldsymbol{e}_h^n \geq 0$, we obtain
\begin{equation}\label{eq:ah:mono:initial}
  \begin{aligned}
    a_h (\boldsymbol{u}_h^n,\boldsymbol{e}_h^n) - a_h(\projRT{k}\boldsymbol{u}^n,\boldsymbol{e}_h^n)
    \gtrsim 
    \sum_{F \in \Fh} h_F^{1-r} \int_{F} \left( |\jump{\boldsymbol{u}_h^n}|^{r} + |\jump{\projRT{k} \boldsymbol{u}^n}|^{r}  \right)^{\frac{r-2}{r}} |\jump{\boldsymbol{e}_h^n}|^{2}.
  \end{aligned}
\end{equation}
Recalling that $\Real^+ \ni x \mapsto x^{r-2} \in \Real$ is decreasing, we infer that
\begin{equation*}
  \begin{aligned}
    \left( |\jump{\boldsymbol{u}_h^n}|^{r} + |\jump{\projRT{k} \boldsymbol{u}^n}|^{r}  \right)^{\frac{r-2}{r}}
    &\gtrsim
    \left( |\jump{\boldsymbol{u}_h^n}| + |\jump{\projRT{k} \boldsymbol{u}^n}|  \right)^{r-2}
    \gtrsim
    \left( |\jump{\boldsymbol{e}_h^n}| + |\jump{\projRT{k} \boldsymbol{u}^n}|  \right)^{r-2} \\
    &\gtrsim
    \left( \max \{ |\jump{\boldsymbol{e}_h^n}|, |\jump{\projRT{k} \boldsymbol{u}^n}| \}  \right)^{r-2}
    \\
    &=
    \min \{ |\jump{\boldsymbol{e}_h^n}|^{r-2}, |\jump{\projRT{k} \boldsymbol{u}^n - \boldsymbol{u}^n} |^{r-2} \} \\
    &\gtrsim
    \min \{ |\jump{\boldsymbol{e}_h^n}|^{r-2}, h_F^{r-2} \},
  \end{aligned}
\end{equation*}
where the second inequality is obtained observing that $|\jump{\boldsymbol{u}_h^n}| \overset{\eqref{eq:ehn}}= |\jump{\boldsymbol{e}_h^n} - \jump{\projRT{k} \boldsymbol{u}^n}| \le |\jump{\boldsymbol{e}_h^n}| + |\jump{\projRT{k} \boldsymbol{u}^n}|$, while the equality follows from $\jump{\boldsymbol{u}^n} = \boldsymbol{0}$ and the final inequality from the approximation property \eqref{eq:approx:RT:face} of $\projRT{k}$ with $(q,s) = (\infty,1)$ together with Assumption \ref{reg:ass:u}, which ensures $\norm{\boldsymbol{L}^{\infty}(\Omega)}{\boldsymbol{\nabla} \boldsymbol{u}^n} \lesssim 1$.
Inserting this bound into \eqref{eq:ah:mono:initial} gives the desired result.
\end{proof}

The following lemma is a more specific counterpart of Lemma \ref{lem:diffusive:error}, needed in the case $1 < r < 2$ for the reason explained at the start of this section.

\begin{lemma}[Regime-dependent estimate of the diffusive error for $1 < r <2$]\label{lem:diffusive:error:rs2}
  If $1<r < 2$, then, recalling the definition~\eqref{eq:consistency:error:diffusive} of $\diffTerm{n}(\cdot)$, under Assumption \ref{reg:ass:u}, for $n=1,\ldots,N$ and any $\delta >0 $,  it holds
  \begin{equation}\label{eq:diffTerm:estimate:rs2}
    \diffTerm{n}(\boldsymbol{e}_h^n)
    \le
    \delta\left[
      a_h(\boldsymbol{u}_h^n,\boldsymbol{e}_h^n)
      - a_h(\projRT{k}\boldsymbol{u}^n,\boldsymbol{e}_h^n)
      + \nu \seminorm{\star}{\boldsymbol{e}_h^n}^{2}
      + \seminorm{\boldsymbol{u}_h^n}{\boldsymbol{e}_h^n}^2
      \right]
    + c(\delta) \widehat{\diffErr},
  \end{equation}
  where, recalling the definition \eqref{eq:diffErr} of $\diffErr$,
  \begin{equation}\label{eq:diffErr:rs2}
    \begin{aligned}
      \widehat{\diffErr}
      &\coloneqq \diffErr + \sum_{F \in \Fhd} h_F^{2(m+1)} \nu^{-1} \seminorm{\boldsymbol{H}^{m+1}(\TF)}{\boldsymbol{\sigma}(\boldsymbol{\nabla}\boldsymbol{u}^n)}^2
      \end{aligned}
  \end{equation}
  and $\seminorm{\star}{\boldsymbol{e}_h^n}^{2}$ is defined in \eqref{eq:norm:min}.
\end{lemma}
\begin{proof}
The proof follows exactly the same steps as in Lemma \ref{lem:diffusive:error}. The only difference lies in the replacement of the estimate of $\diffTerm{n}_2$ (cf. \eqref{eq:estimate:D}) in the diffusion-dominated regime. Here we proceed as follows.
For a fixed $F \in \Fhd$ we set $F_1 \coloneqq \{ \boldsymbol{x} \in F : h_F^{1-r} | \jump{\boldsymbol{e}_h^n} |^{r}(\boldsymbol{x}) \leq  h_F^{-1} | \jump{\boldsymbol{e}_h^n} |^{2} (\boldsymbol{x})  \}$ and $F_2 \coloneqq F \setminus F_1$. 
Let's recall that, for any $\delta > 0$, $q \in (1,\infty)$, and $\alpha,\,\beta > 0$ it holds that
\begin{equation}\label{eq:young:diffTerm}
\alpha \beta \leq \delta \nu h_F^{1-q} \beta^q + c(\delta) \nu^{-\frac{q'}{q}} h_F \alpha^{q'},
\end{equation} 
with $\frac{1}{q} + \frac{1}{q'}=1$. 
Applying this inequality on $F_1$ with $\alpha = \left| \avg{\projL{k} \boldsymbol{\sigma}(\boldsymbol{\nabla} \boldsymbol{u}^n) - \boldsymbol{\sigma}(\boldsymbol{\nabla} \boldsymbol{u}^n) } \normalF \right|$, $\beta = \left| \jump{\boldsymbol{e}_h^n} \right|$, and $q=r$, and noticing that $\norm{\boldsymbol{L}^{\infty}(F)}{\normalF} \leq 1$, we obtain
\begin{multline*}
  \sum_{F \in \Fhd} \int_{F_1} \left(
  \avg{\projL{k} \boldsymbol{\sigma}(\boldsymbol{\nabla} \boldsymbol{u}^n) - \boldsymbol{\sigma}(\boldsymbol{\nabla} \boldsymbol{u}^n) } \normalF
  \right) \cdot \jump{\boldsymbol{e}_h^n}
  \\
  \leq
  \delta \nu \sum_{F \in \Fhd} \int_{F_1} h_F^{1-r} \left| \jump{\boldsymbol{e}_h^n} \right|^r
  + c(\delta) \sum_{F \in \Fhd} \int_{F_1} h_F \nu^{-\frac{r'}{r}} \left| \avg{\projL{k} \boldsymbol{\sigma}(\boldsymbol{\nabla} \boldsymbol{u}^n) - \boldsymbol{\sigma}(\boldsymbol{\nabla} \boldsymbol{u}^n) } \right|^{r'}.
\end{multline*}
Using the $L^2$-projection approximation property we further obtain
\begin{multline*}
  \sum_{F \in \Fhd} \int_{F_1} \left(
  \avg{\projL{k} \boldsymbol{\sigma}(\boldsymbol{\nabla} \boldsymbol{u}^n) - \boldsymbol{\sigma}(\boldsymbol{\nabla} \boldsymbol{u}^n) } \normalF
  \right) \cdot \jump{\boldsymbol{e}_h^n}
  \\
  \leq
  \delta \nu \sum_{F \in \Fhd} \int_{F_1} h_F^{1-r} \left| \jump{\boldsymbol{e}_h^n} \right|^r
  + c(\delta) \sum_{F \in \Fhd} \int_{F} h_F^{r'(m+1)} \nu^{-\frac{r'}{r}} \seminorm{\boldsymbol{W}^{m+1,r'}(\TF)}{\boldsymbol{\sigma}(\boldsymbol{\nabla}\boldsymbol{u}^n)}^{r'}.
\end{multline*}
Similarly, applying \eqref{eq:young:diffTerm} with the same $a,b$ and $q=2$ on $F_2$, we get
\begin{multline*}
  \sum_{F \in \Fhd} \int_{F_2} \left(
  \avg{\projL{k} \boldsymbol{\sigma}(\boldsymbol{\nabla} \boldsymbol{u}^n) - \boldsymbol{\sigma}(\boldsymbol{\nabla} \boldsymbol{u}^n) } \normalF
  \right) \cdot \jump{\boldsymbol{e}_h^n}
  \\
  \leq
  \delta \nu \sum_{F \in \Fhd} \int_{F_2} h_F^{-1} \left| \jump{\boldsymbol{e}_h^n} \right|^2
  + c(\delta) \sum_{F \in \Fhd} \int_{F} h_F^{2(m+1)} \nu^{-1} \seminorm{\boldsymbol{H}^{m+1}(\TF)}{\boldsymbol{\sigma}(\boldsymbol{\nabla}\boldsymbol{u}^n)}^{2}.
\end{multline*}
Observing that $\int_F = \int_{F_1} + \int_{F_2}$ and that $ \sum_{F \in \Fhd} \left( \int_{F_1} h_F^{1-r} \left| \jump{\boldsymbol{e}_h^n} \right|^r + \int_{F_2} h_F^{-1} \left| \jump{\boldsymbol{e}_h^n} \right|^2\right) \leq   \seminorm{\star}{\boldsymbol{e}_h^n}^2$ we conclude that
\[
  \sum_{F \in \Fhd} \int_{F} \left(
      \avg{\projL{k} \boldsymbol{\sigma}(\boldsymbol{\nabla} \boldsymbol{u}^n) - \boldsymbol{\sigma}(\boldsymbol{\nabla} \boldsymbol{u}^n) } \normalF
      \right) \cdot \jump{\boldsymbol{e}_h^n} \leq \delta \nu \seminorm{\star}{\boldsymbol{e}_h^n}^2 + c(\delta) \widehat{\diffErr}.
      \qedhere
\]
\end{proof}

We are now able to show the main result of this section, which closely resembles Theorem \ref{thm:error.estimate} but differs in a single, yet critical, step of the proof.

\begin{theorem}[Velocity error estimate for $1 < r < 2$]\label{thm:error.estimate.rs2}
  Under the regularity Assumption \ref{reg:ass:u}, and further assuming $\tstep < \frac{1}{5 C_{\boldsymbol{u}}^\star}$ with $C_{\boldsymbol{u}}^\star$  introduced in \eqref{eq:Custar}, for $r \in (1,2)$ it holds,
  \begin{equation}\label{eq:error.estimate.rs2}
    \norm{\boldsymbol{L}^2(\Omega)}{\boldsymbol{e}_h^N}^2
    + \sum_{n=1}^N \tstep \left[
    \left(a_h (\boldsymbol{u}_h^n,\boldsymbol{e}_h^n) - a_h(\projRT{k}\boldsymbol{u}^n,\boldsymbol{e}_h^n) \right)
    + \seminorm{\boldsymbol{u}_h^n}{\boldsymbol{e}_h^n}^2
    \right]
    \lesssim
    \sum_{n=1}^N \tstep \widehat{\Err},
  \end{equation}
  where, recalling \eqref{eq:timeErr}, \eqref{eq:diffErr:rs2}, \eqref{eq:convErr}, and \eqref{eq:upwErr}, the consistency error is defined by
  \[
  \widehat{\Err} \coloneq \timeErr + \widehat{\diffErr} + \convErr + \upwErr.
  \]
  and the hidden constant in \eqref{eq:error.estimate.rs2} depends linearly on $\exp\left[ \frac{5C_{\boldsymbol{u}}^\star}{1 - 5 C_{\boldsymbol{u}}^\star \tstep} \tF \right]$.
\end{theorem}
\begin{proof}
The argument follows the same line of reasoning as in the proof of Theorem \ref{thm:error.estimate}. Making the hidden constant in \eqref{eq:ah:coercivity:rs2} explicit we get
\[
  a_h(\boldsymbol{u}_h^n,\boldsymbol{e}_h^n) - a_h(\projRT{k}\boldsymbol{u}^n,\boldsymbol{e}_h^n) \geq
  \frac12 \left( a_h(\boldsymbol{u}_h^n,\boldsymbol{e}_h^n) - a_h(\projRT{k}\boldsymbol{u}^n,\boldsymbol{e}_h^n) \right) + \frac{C_{a}}{2} \nu \seminorm{\star}{\boldsymbol{e}_h^n}^{2}.
  \]
Hence, by the same argument leading to \eqref{eq:error:lower:bound}, one obtains
 \begin{multline}\label{eq:error:lower:bound:rs2}
    \timeTerm{n} (\boldsymbol{e}_h^n)
    + \diffTerm{n} (\boldsymbol{e}_h^n)
    + \convTerm{n} (\boldsymbol{e}_h^n)
    + \upwTerm{n} (\boldsymbol{e}_h^n)
    \geq \frac{1}{2 \tstep} \norm{\boldsymbol{L}^2(\Omega)}{\boldsymbol{e}_h^n}^2 - \frac{1}{2 \tstep} \norm{\boldsymbol{L}^2(\Omega)}{\boldsymbol{e}_h^{n-1}}^2
    \\
    + \frac12 \left( a_h(\boldsymbol{u}_h^n,\boldsymbol{e}_h^n) - a_h(\projRT{k}\boldsymbol{u}^n,\boldsymbol{e}_h^n) \right) + \frac12 C_a \nu \seminorm{\star}{\boldsymbol{e}_h^n}^{2} + \seminorm{\boldsymbol{u}_h^n}{\boldsymbol{e}_h^n}^2.
  \end{multline}
  Plugging our estimates \eqref{eq:timeTerm:estimate} of $\timeTerm{n}(\boldsymbol{e}_h^n)$,
  \eqref{eq:diffTerm:estimate:rs2} of $\diffTerm{n}(\boldsymbol{e}_h^n)$,
  \eqref{eq:convTerm:estimate} of $\convTerm{n}(\boldsymbol{e}_h^n)$,
  and \eqref{eq:upwTerm:estimate} of $\upwTerm{n}(\boldsymbol{e}_h^n)$ into \eqref{eq:error:lower:bound:rs2} and rearranging terms, we obtain
  \begin{multline*}
    \left( \frac{1}{2 \tstep} - (3 \delta + 2C_{\boldsymbol{u}}^\star)  \right) \norm{\boldsymbol{L}^2(\Omega)}{\boldsymbol{e}_h^n}^2
    +\left(\frac12 C_a - \delta \right) \nu \seminorm{\star}{\boldsymbol{e}_h^n}^{2}
    +\left( 1 - 3 \delta \right) \seminorm{\boldsymbol{u}_h^n}{\boldsymbol{e}_h^n}^2 \\
    +\left( \frac12 - \delta \right) \left( a_h(\boldsymbol{u}_h^n,\boldsymbol{e}_h^n) - a_h(\projRT{k}\boldsymbol{u}^n,\boldsymbol{e}_h^n) \right)
    \leq 
     \frac{1}{2 \tstep} \norm{\boldsymbol{L}^2(\Omega)}{\boldsymbol{e}_h^{n-1}}^2
    + 4 c(\delta) \widehat{\Err}.
  \end{multline*}
 Setting $\tilde{C} \coloneqq \frac12 \min \{ C_a, 1 \}$ and multiplying the above inequality by $2 \tstep$, we arrive at 
  \begin{multline*}
    \left( 1 - \tstep (6 \delta + 4C_{\boldsymbol{u}}^\star)  \right) \norm{\boldsymbol{L}^2(\Omega)}{\boldsymbol{e}_h^n}^2
    \\
    +\left(2\tilde{C} - 6 \delta \right) \tstep\left[ \nu \seminorm{\star}{\boldsymbol{e}_h^n}^{2}
    +\seminorm{\boldsymbol{u}_h^n}{\boldsymbol{e}_h^n}^2 
    + \left( a_h(\boldsymbol{u}_h^n,\boldsymbol{e}_h^n) - a_h(\projRT{k}\boldsymbol{u}^n,\boldsymbol{e}_h^n) \right) \right]
    \\
    \leq 
     \norm{\boldsymbol{L}^2(\Omega)}{\boldsymbol{e}_h^{n-1}}^2
    + 8 c(\delta) \tstep \widehat{\Err}.
  \end{multline*}
  From this point, the proof proceeds exactly as in Theorem \ref{thm:error.estimate}. We choose $\hat{\delta} \coloneqq \min \left\{ \frac{C_{\boldsymbol{u}}^\star}{6}, \frac{\tilde{C}}{6} \right\}$, set $\hat{C} \coloneqq c(\hat{\delta})$ and then, under the assumption $\tstep < \frac{1}{5 C_{\boldsymbol{u}}^\star}$, we divide both sides by $1-5 C_{\boldsymbol{u}}^\star > 0$. 
  Summing over $n=1,\ldots, N$ we arrive at
  \begin{multline*}
    \norm{\boldsymbol{L}^2(\Omega)}{\boldsymbol{e}_h^N}^2
    +\frac{\tilde{C}}{1 - 5 C_{\boldsymbol{u}}^\star \tstep}
    \sum_{n=1}^N \tstep \left[ \nu \seminorm{\star}{\boldsymbol{e}_h^n}^{2}
    +\seminorm{\boldsymbol{u}_h^n}{\boldsymbol{e}_h^n}^2 
    + \left( a_h(\boldsymbol{u}_h^n,\boldsymbol{e}_h^n) - a_h(\projRT{k}\boldsymbol{u}^n,\boldsymbol{e}_h^n) \right) \right] \\
    \leq 
    \frac{5 C_{\boldsymbol{u}}^\star \tstep }{1 - 5 C_{\boldsymbol{u}}^\star \tstep}
    \sum_{n=1}^N \norm{\boldsymbol{L}^2(\Omega)}{\boldsymbol{e}_h^{n-1}}^2
    +\frac{ 8 \hat{C}}{1 - 5 C_{\boldsymbol{u}}^\star \tstep} \sum_{n=1}^N \tstep \widehat{\Err}.
  \end{multline*}
  We conclude applying the discrete Gr\"onwall inequality (see \cite[Lemma 5.1]{Heywood.Rannacher:90}), rearranging terms as in the last two steps of the proof of Theorem \ref{thm:error.estimate}, and noting that $\nu \seminorm{\star}{\boldsymbol{e}_h^n}^{2} \geq 0$.
\end{proof}
\begin{table}\centering
  \adjustbox{width=\textwidth}{%
    \begin{tabular}{cccccc}
      \toprule
      Term & Definition & Diffusion-dominated & Diffusion-dominated & Diffusion-dominated & Convection-dominated \\
      & & $r < 2$ & $r = 2$ & $r > 2$ & \\
      \midrule
      $\sum_{n=1}^N \tstep \timeErr$ & \eqref{eq:timeErr} &
      $\tstep^2 + h^{2(m+1)}$
      & $\tstep^2 + h^{2(m+1)}$
      & $\tstep^2 + h^{2(m+1)}$
      & $\tstep^2 + h^{2(m+1)}$
      \\
      $\sum_{n=1}^N \tstep \diffErr, \quad \sum_{n=1}^N \tstep \widehat{\diffErr}$ & \eqref{eq:diffErr},\eqref{eq:diffErr:rs2} & $h^{rm}$ & $h^{2m}$ & $\begin{cases}
        h^{r'(m+1)} & \text{if $m \ge \frac{r'}{2-r'}$}
        \\
        h^{2m} & \text{otherwise}
      \end{cases}$ & $h^{2m+1}$ \\
      $ \sum_{n=1}^N \tstep \big( \convErr + \upwErr \big)$
      & \eqref{eq:convErr}, \eqref{eq:upwErr}
      & $h^{2m+1}$ & $h^{2m+1}$ & $h^{2m+1}$ & $h^{2m+1}$ \\
      \midrule
      $\sum_{n=1}^N \tstep \Err, \quad \sum_{n=1}^N \tstep \widehat{\Err}$ &
      & $\tstep^2 + h^{rm}$
      & $\tstep^2 + h^{2m}$
      & $\tstep^2 + \begin{cases}
        h^{r'(m+1)} & \text{if $m \ge \frac{r'}{2-r'}$}
        \\
        h^{2m} & \text{otherwise}
      \end{cases}$
      & $\tstep^2 + h^{2m+1}$  \\
      \bottomrule
    \end{tabular}
  }
  \caption{Dominant term for each contribution to $\Err$ and $\widehat{\Err}$ and for the right-hand side of \eqref{eq:error.estimate} and \eqref{eq:error.estimate.rs2}. Here above $1 \le m \le k$.
    \label{tab:convergence.rates} }
\end{table}
\begin{remark}
  The quantity $a_h (\boldsymbol{u}_h^n,\boldsymbol{e}_h^n) - a_h(\projRT{k}\boldsymbol{u}^n,\boldsymbol{e}_h^n)$ appearing in \eqref{eq:error.estimate.rs2} represents, due to the monotonicity property of $a_h$, a diffusive error measure (discrete quasi-norm). 
  Due to \eqref{eq:ah:coercivity}, whenever \eqref{eq:not-true} holds, the above quantity will control $\norm{1,r,h}{\boldsymbol{u}_h^n}^2$, immediately leading to an analogous bound as in  \eqref{eq:error.estimate.rs2} also for 
  $$
  \sum_{n=1}^N \tstep \nu \norm{1,r,h}{\boldsymbol{e}_h^n}^{2} \, ;
  $$
  see also Remark \ref{rem:class-error}.
  Finally note that, from the (uniform in $n$) convergence of $\norm{\boldsymbol{L}^2(\Omega)}{\boldsymbol{e}_h^n}$, $n =1,2,..,N$, of \eqref{eq:error.estimate.rs2} and making use of inverse estimates, it can be easily proved that (provided the mesh sequence is quasi-uniform and $k \ge 2$) bound \eqref{eq:not-true} does indeed hold. 
\end{remark}

\section{Numerical tests}\label{sec:Numerical tests}

In the present section we investigate the performance of the method from the computational standpoint. Test 1 represents a problem with a manufactured regular solution, the objective being to compare the numerical error with the theoretical convergence rates for different values of the model parameters $\nu$ and $r$. Test 2 studies a benchmark problem from the literature \cite{Mahmood.Bilal.ea:20} and has a more practical flavor, the focus being on some qualitative characteristics of the numerical solution.

The implementation of the scheme was performed in the language C++. All performed tests are limited to the lowest order case ($k=1$) and a standard Picard iteration is used at each time step to solve the non-linear problem \eqref{eq:full.discrete.problem} (the initial value being the discrete solution at the previous time step). The ``safeguard'' parameter $C_F$ in \eqref{eq:gammaF} was set equal to $10^{-4}$, but was never activated during all performed tests (that is, the max in \eqref{eq:gammaF} was never attained by $C_F$).  

\subsection{Test 1}

We consider the square domain $\Omega=(0,1)^2$, the time interval $[0,1]$, and the exact solution
\[
\boldsymbol{u} (t,x,y) =e^{\frac{t}{10}} \begin{pmatrix}
  16y(1-y)(1-2y)\sin^2(\pi x) \\
  -8\pi y^2(1-y)^2 \sin(2\pi x)
\end{pmatrix},\qquad
p (t,x,y) = e^{\frac{t}{10}} \sin (\pi x) \cos (\pi y).
\]
The loading term $\boldsymbol{f}$, the initial data, and Dirichlet boundary condition are set in accordance to the above expressions.
We emphasize that the solution is purposefully very smooth in time (low derivatives) in order to better underline the spatial discretization error, which is the focus of this contribution.
We solve the problem numerically with the proposed method for a family of five unstructured triangular meshes of average size $h$ equal to $\{0.349,0.149,0.079,0.033,0.015\}$, respectively.
The time-step size $\tstep$ is set roughly equal to $\frac32 h$ so, that finer spatial meshes correspond to finer time grids.

We monitor the following errors for velocities and pressures:
\[
\begin{gathered}
  \mathrm{velERR}_h \coloneqq
  \norm{\boldsymbol{L}^2(\Omega)}{\boldsymbol{u}(\tF) - \boldsymbol{u}_h^{N}}^2
  + \tstep \sum_{n=1}^N \left(
  \nu \norm{1,r,h}{\boldsymbol{u} (t^n) - \boldsymbol{u}_h^{n}}^{\overline{r}}
  + \seminorm{\boldsymbol{u}_h^n}{\boldsymbol{u} (t^n) - \boldsymbol{u}_h^{n}}^2
  \right) \, , \\
  \mathrm{preERR}_h \coloneq
  \tstep \sum_{n=1}^N \norm{L^{r'}(\Omega)}{p (t^n) - p_h^{n}}^2
  \, .
\end{gathered}
\]
According to our theoretical findings, see for instance Table \ref{tab:convergence.rates}, one can easily derive the expected velocity error behaviour in $h$.
Note that, as confirmed by the results below, for the considered meshes we can ignore the influence of $\tstep$ on the error for the reason outlined above (the solution is, purposefully, very slowly changing in time).
We obtain that $\mathrm{velERR}_h$ should behave as $O(h^{3})$ in convection dominated cases and that
\begin{equation}\label{eq:expected}
  \mathrm{velERR}_h \simeq
  \left\{
  \begin{aligned}
    &  O(h^{r}) \quad \textrm{ for }  1 < r \le 2 \, , \\
    &  O(h^2) \quad \textrm{ for }  r > 2 \, .
  \end{aligned}
  \right.
\end{equation}
in diffusion dominated cases.

Although, as already noted, deriving error bounds for the pressures is outside the scope of the present work, we include also a numerical study of the  pressure error for the sake of completeness.

In Figure~\ref{fig:conv.req2} we plot the convergence lines as a function of the mesh size $h$ for a Newtonian fluid $r=2$; in this case our method corresponds to a fully implicit version of the scheme in \cite{Han.Hou:21}. We consider two choices for the parameter $\nu$, corresponding to a diffusion dominated ($\nu=1$) and a convection dominated ($\nu=10^{-5}$) regime.
We can appreciate an $O(h^2)$ convergence rate for the velocities in the first case and an $O(h^3)$ (pre-asymptotic) error reduction rate for the second one, as expected by our theory.

In Figures \ref{fig:conv.rs2} and \ref{fig:conv.rg2} we plot the errors for various choices of $r$ different from $2$. In all cases we can observe the expected higher order pre-asymptotic convergence rate in convection dominated regimes. Regarding diffusion dominated regimes, for $r<2$ (Figure \ref{fig:conv.rs2}), we can indeed note a non-optimal (i.e. less than the interpolation error) convergence rate, as expected from the theoretical results \eqref{eq:expected}.
In the case $r>2$ (Figure \ref{fig:conv.rg2}), the practical convergence rate for the proposed test seems to behave as $O(h^2)$, which is again aligned with the theoretical findings \eqref{eq:expected}.

In many among the presented cases, the pressure error behaves as the corresponding expected interpolation error but, especially for $(\nu,r) = (1,2)$, a non-negligible loss in the convergence rate is observed (in accordance, for example, to the theoretical results for the simpler stationary non-Newtonian Stokes problem).
Finally, we note that the above tests also serve the purpose (by comparing the $\nu=1$ and $\nu=10^{-5}$ plots) of underlining the Reynolds quasi-robustness of the proposed scheme.
To further investigate this aspect, we performed additional experiments in which we fix the mesh size $h=0.149$ and, for different values of $r$, we consider $\nu \in \{ 1,10^{-1},10^{-2},\ldots,10^{-7}\}$. In Figures \ref{fig:rob.rs2} and \ref{fig:rob.rg2} we plot the velocity error $\mathrm{velERR}_h$.
For each value of $r$, the error shows an initial small decrease and then stabilizes, remaining essentially constant for smaller values of $\nu$.
\begin{figure}
  \centering
  \begin{subfigure}[b]{0.45\textwidth}
    \centering
    \includegraphics[width=\linewidth]{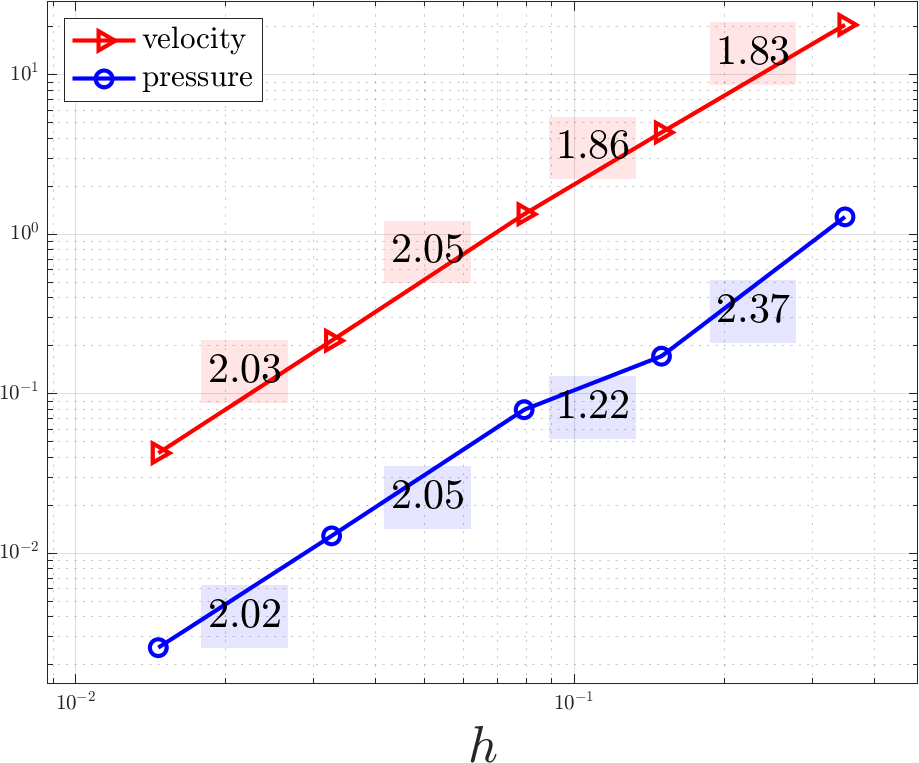}
    \caption{$ \nu = 1$}
  \end{subfigure}
  \hfill
  \begin{subfigure}[b]{0.45\textwidth}
    \centering
    \includegraphics[width=\linewidth]{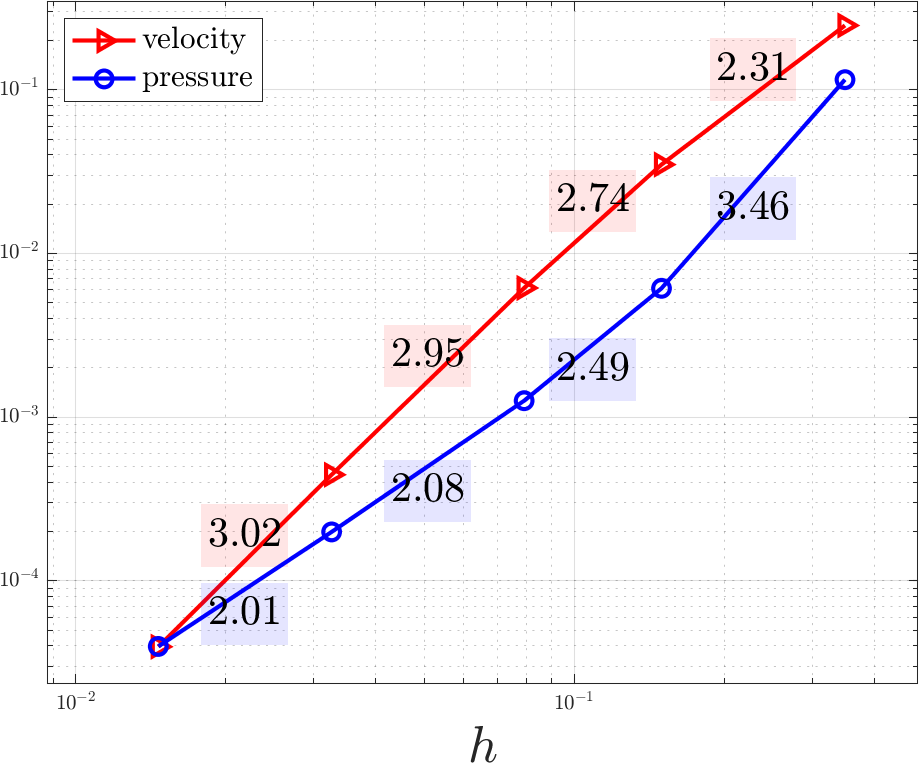}
    \caption{$ \nu = 10^{-5}$}
  \end{subfigure}
  \caption{Test 1. Error plots for $r = 2$.}
  \label{fig:conv.req2}
\end{figure}

\begin{figure}
  \centering
  \begin{subfigure}[b]{0.45\textwidth}
    \centering
    \includegraphics[width=\linewidth]{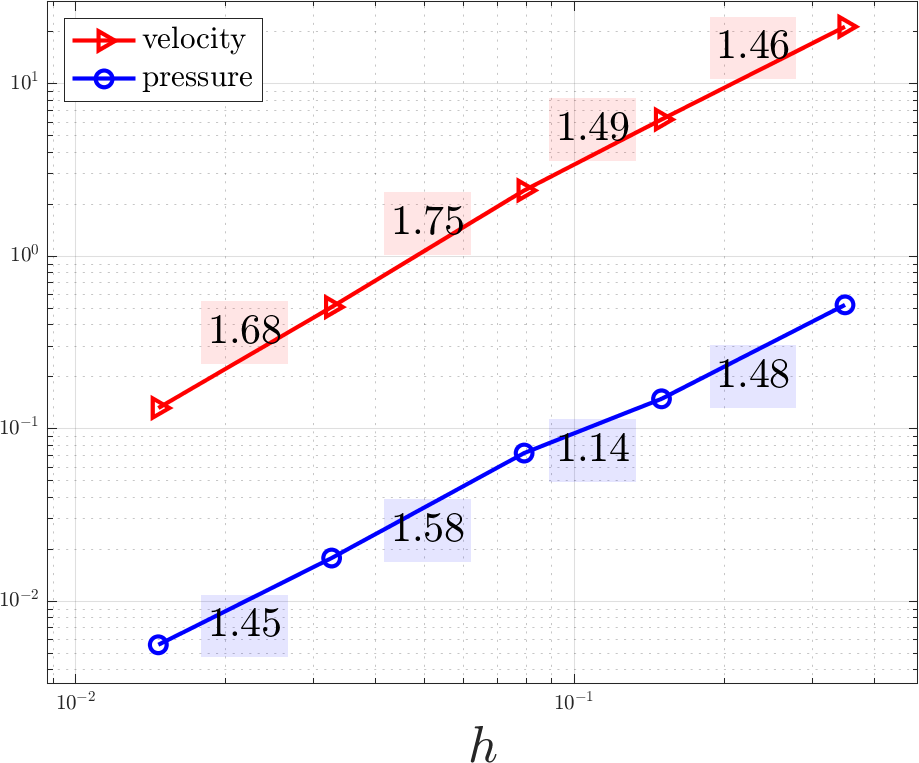}
    \caption{$r = 1.5$ and $\nu=1$}
  \end{subfigure}
  \hfill
  \begin{subfigure}[b]{0.45\textwidth}
    \centering
    \includegraphics[width=\linewidth]{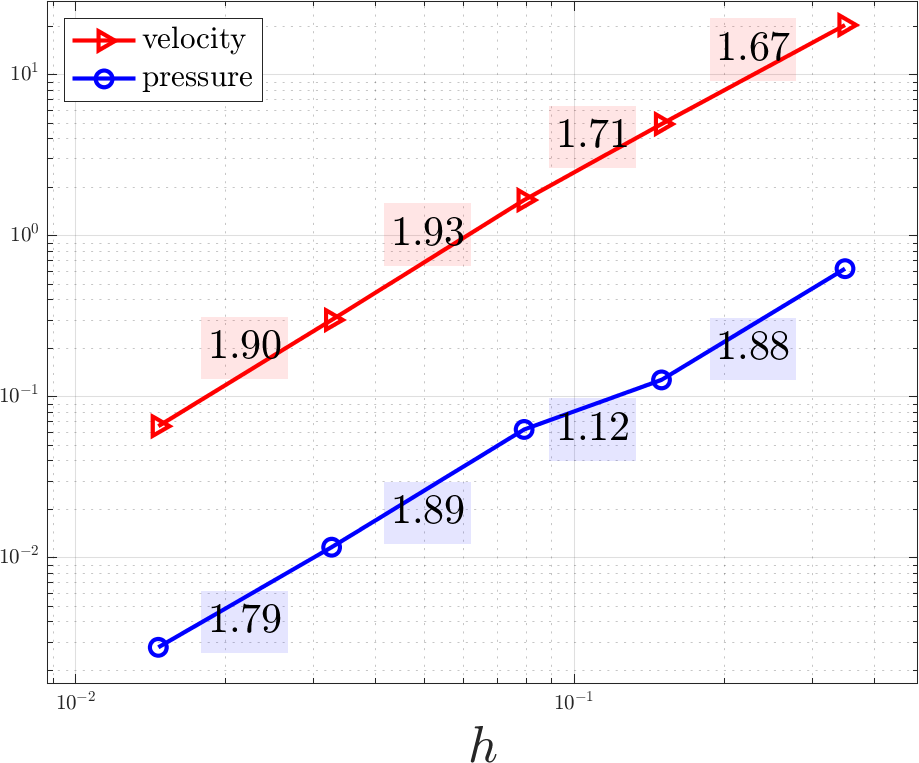}
    \caption{$r = 1.75$ and $\nu=1$}
  \end{subfigure}
  \\
  \vspace{0.5cm}
  \begin{subfigure}[b]{0.45\textwidth}
    \centering
    \includegraphics[width=\linewidth]{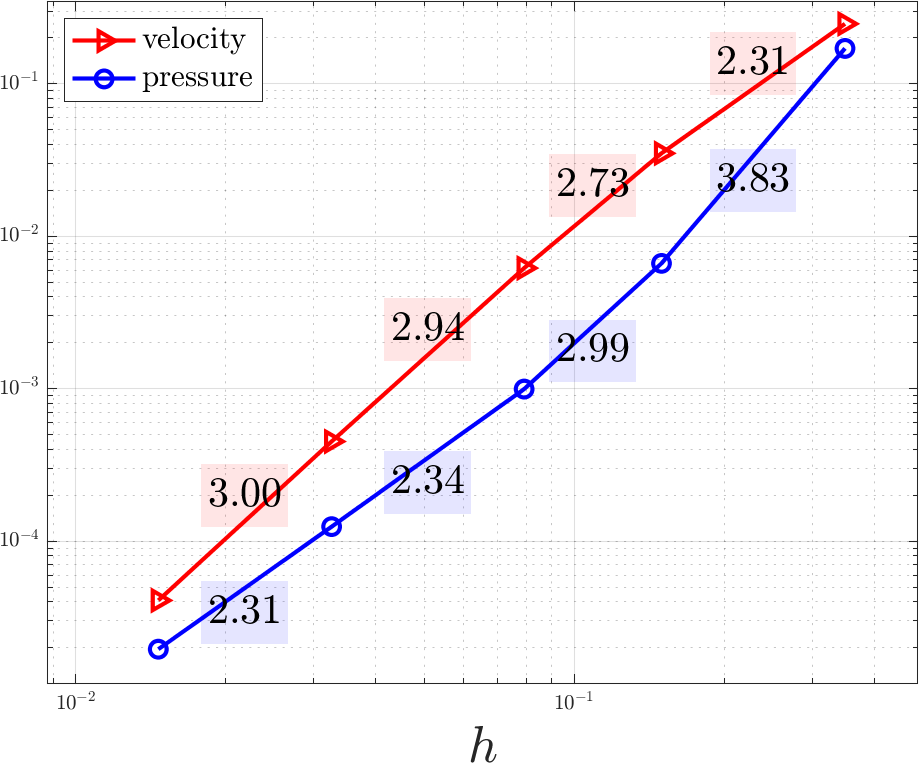}
    \caption{$r = 1.5$ and $\nu=10^{-5}$}
  \end{subfigure}
  \hfill
  \begin{subfigure}[b]{0.45\textwidth}
    \centering
    \includegraphics[width=\linewidth]{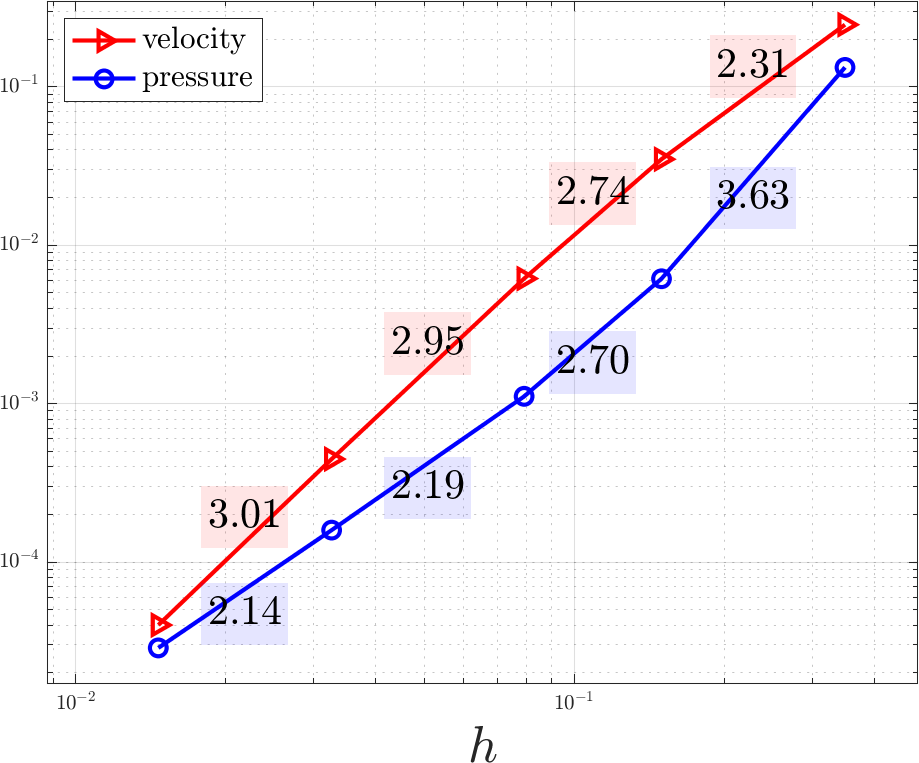}
    \caption{$r = 1.75$ and $\nu=10^{-5}$}
  \end{subfigure}
  \caption{Test 1. Error plots for $r < 2$.}
  \label{fig:conv.rs2}
\end{figure}

\begin{figure}
  \centering
  \begin{subfigure}[b]{0.45\textwidth}
    \centering
    \includegraphics[width=\linewidth]{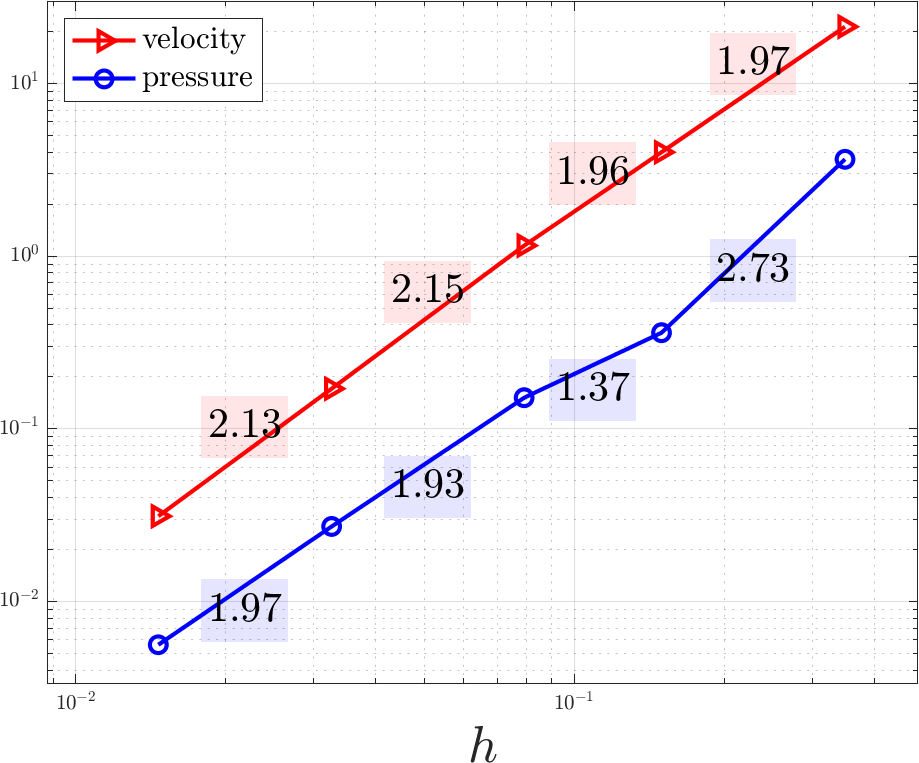}
    \caption{$r = 2.25$ and $\nu=1$}
  \end{subfigure}
  \hfill
  \begin{subfigure}[b]{0.45\textwidth}
    \centering
    \includegraphics[width=\linewidth]{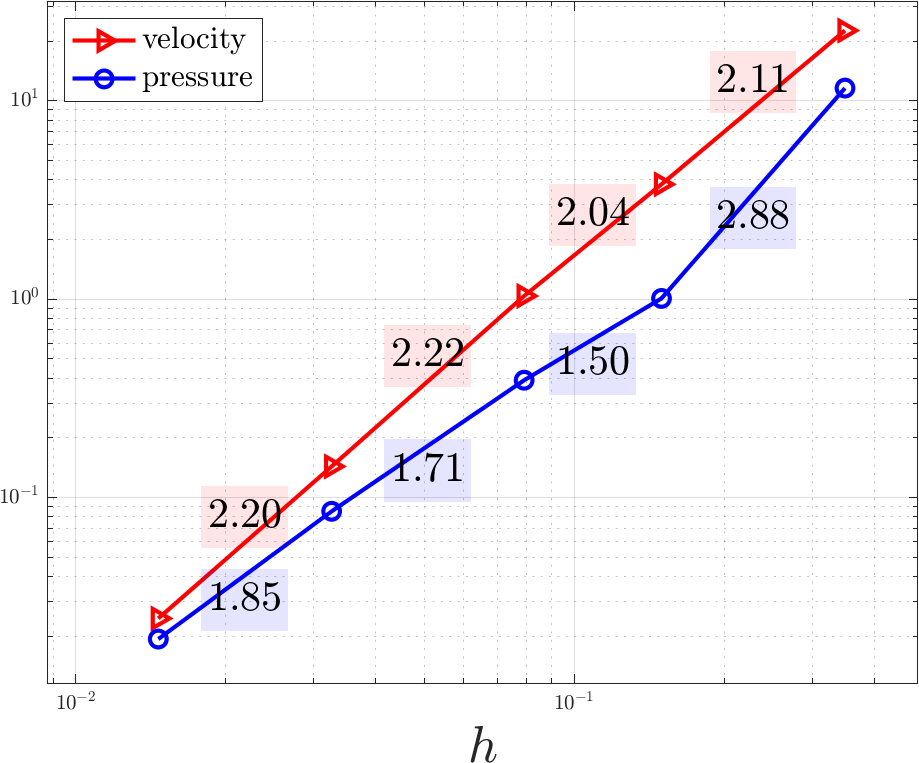}
    \caption{$r = 2.5$ and $\nu=1$}
  \end{subfigure}
  \\
  \vspace{0.5cm}
  \begin{subfigure}[b]{0.45\textwidth}
    \centering
    \includegraphics[width=\linewidth]{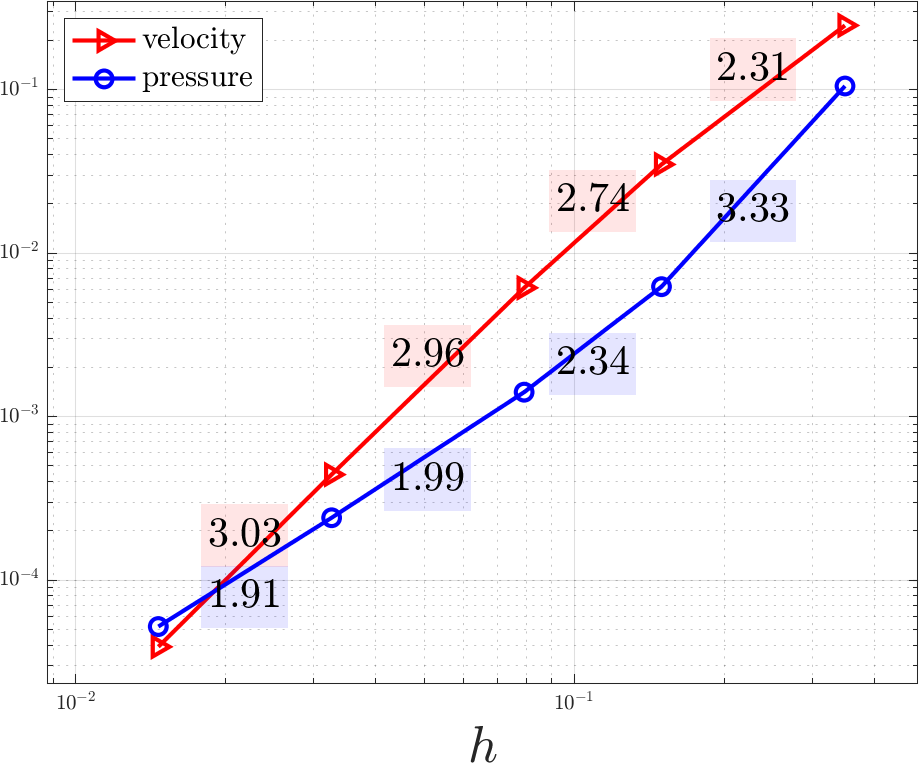}
    \caption{$r = 2.25$ and $\nu=10^{-5}$}
  \end{subfigure}
  \hfill
  \begin{subfigure}[b]{0.45\textwidth}
    \centering
    \includegraphics[width=\linewidth]{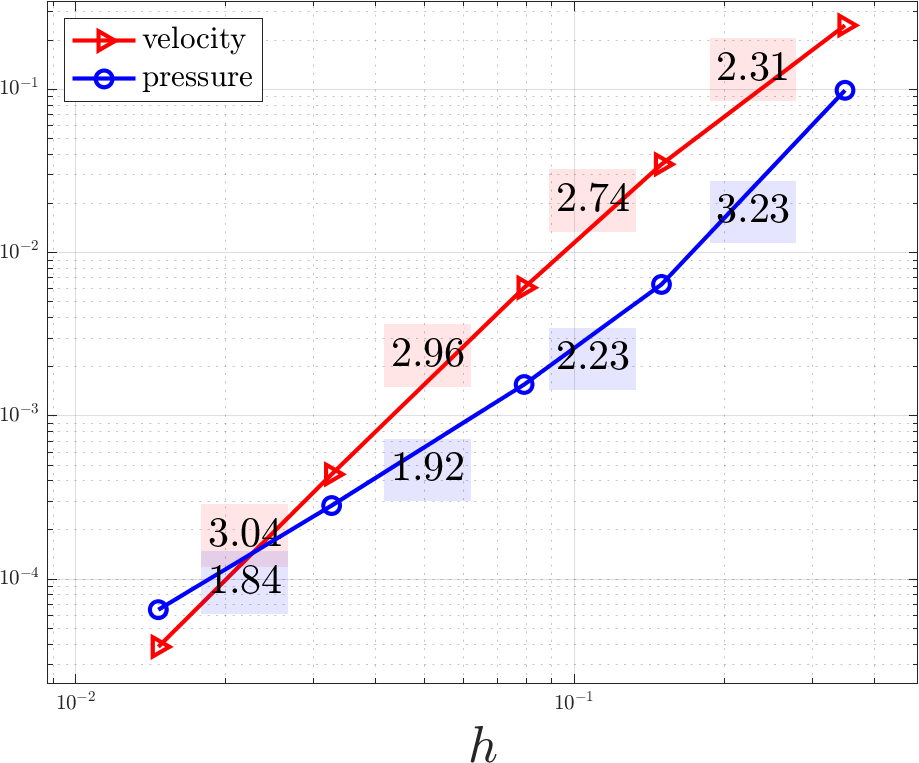}
    \caption{$r = 2.5$ and $\nu=10^{-5}$}
  \end{subfigure}
  \caption{Test 1. Error plots for $r >2$.}
  \label{fig:conv.rg2}
\end{figure}

\begin{figure}
  \centering
  \begin{subfigure}[b]{0.45\textwidth}
    \centering
    \includegraphics[width=\linewidth]{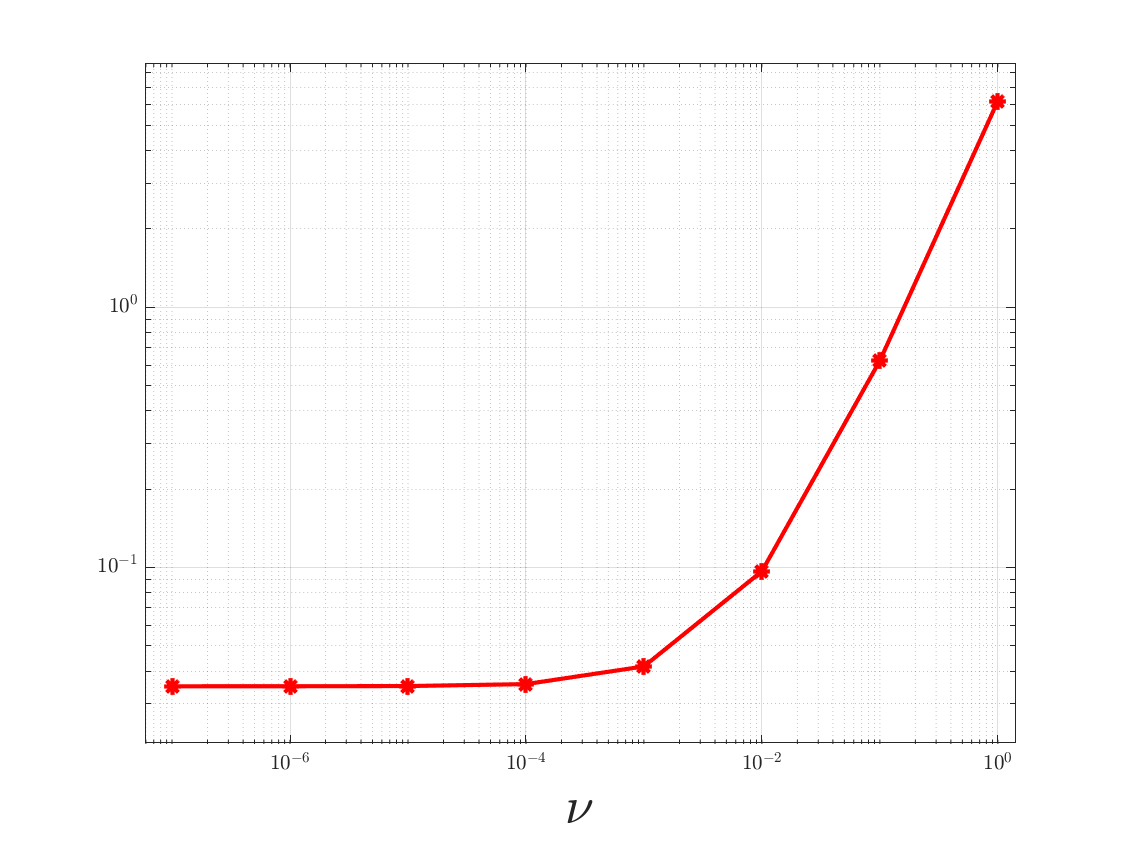}
    \caption{$ r = 1.5$}
  \end{subfigure}
  \hfill
  \begin{subfigure}[b]{0.45\textwidth}
    \centering
    \includegraphics[width=\linewidth]{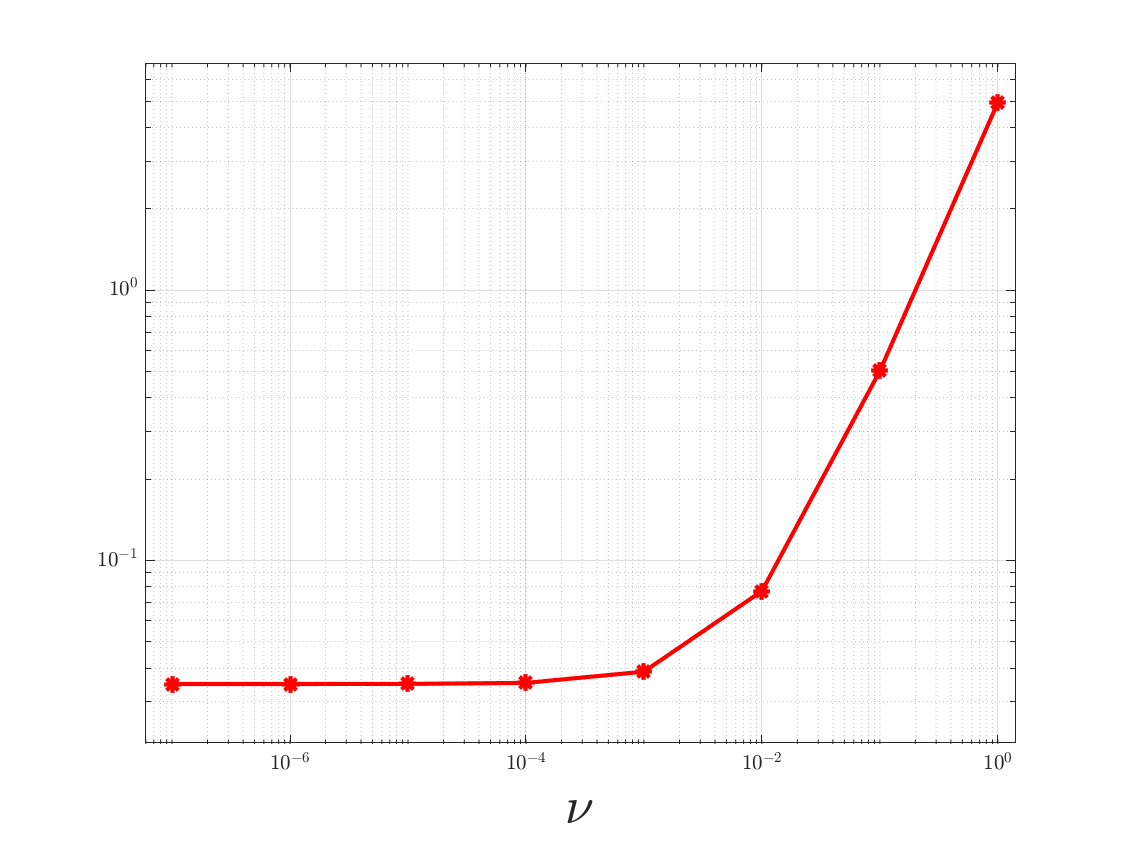}
    \caption{$ r = 1.75$}
  \end{subfigure}
  \caption{Test 1. Robustness plots for $r < 2$.}
  \label{fig:rob.rs2}
\end{figure}

\begin{figure}
  \centering
  \begin{subfigure}[b]{0.45\textwidth}
    \centering
    \includegraphics[width=\linewidth]{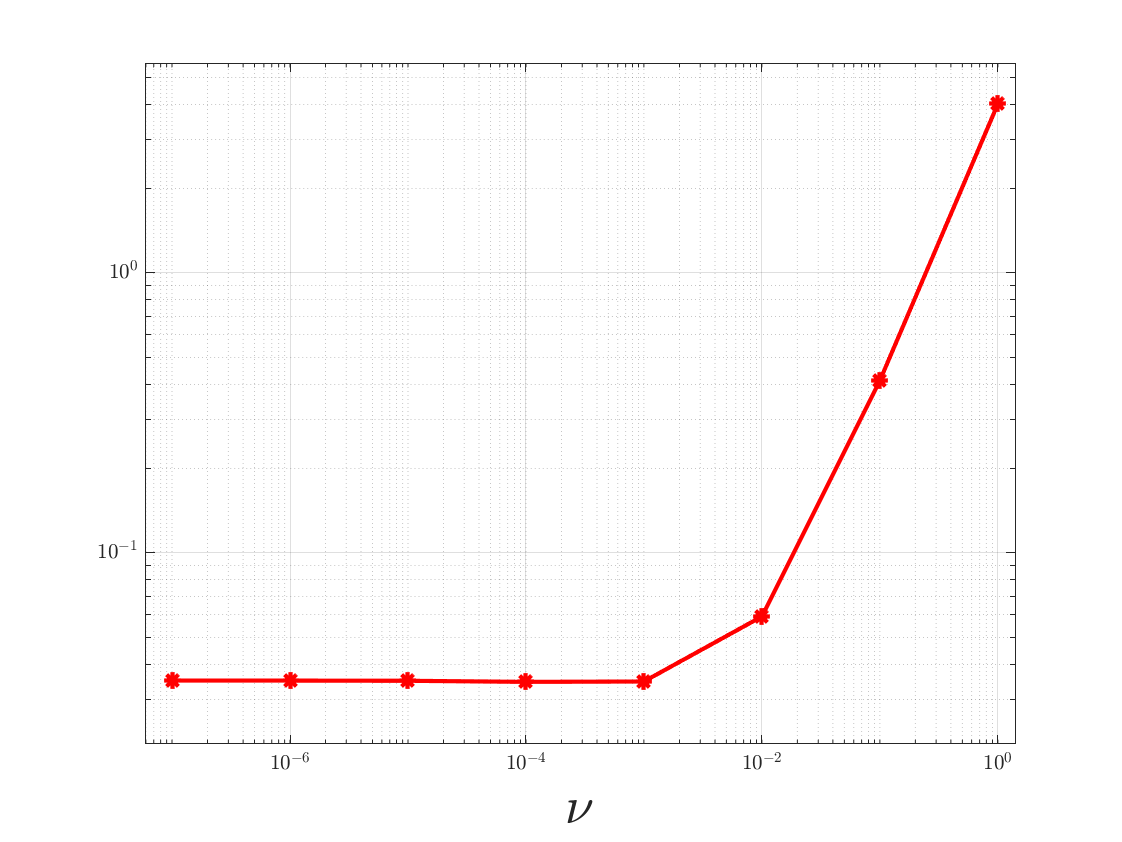}
    \caption{$ r = 2.25$}
  \end{subfigure}
  \hfill
  \begin{subfigure}[b]{0.45\textwidth}
    \centering
    \includegraphics[width=\linewidth]{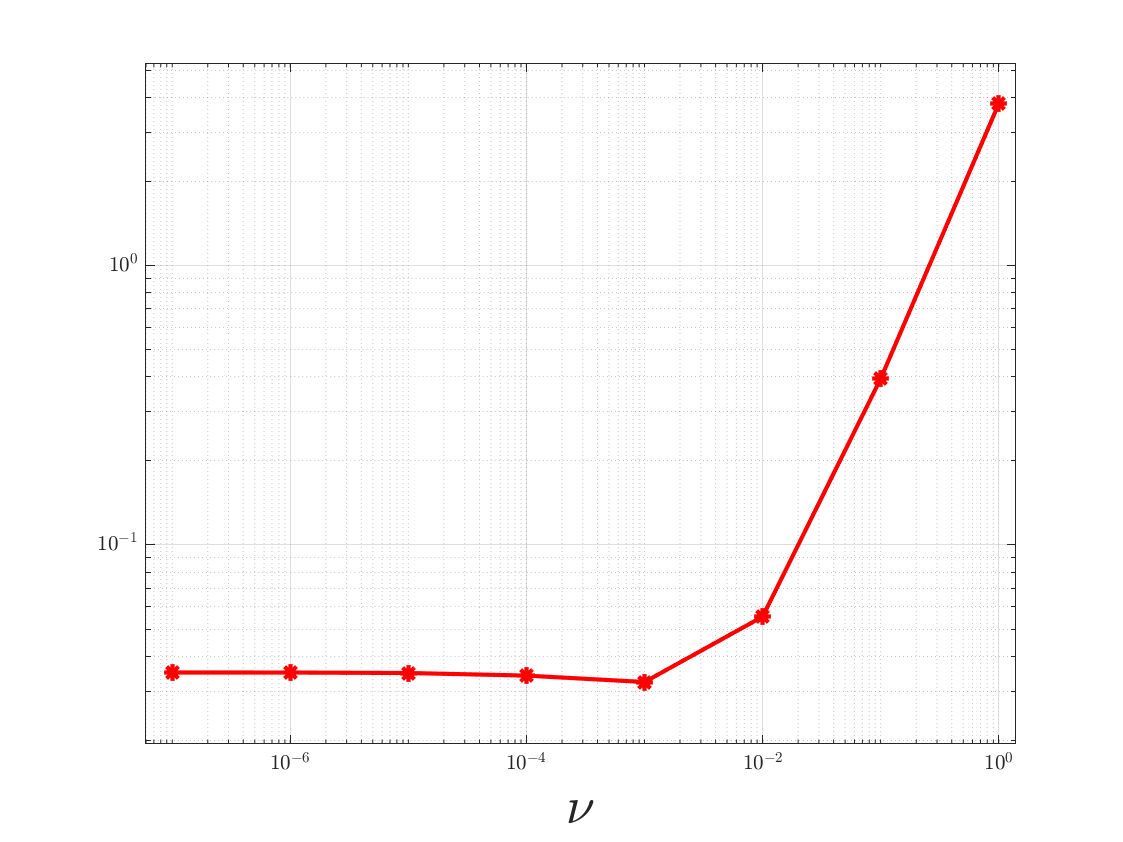}
    \caption{$ r = 2.5$}
  \end{subfigure}
  \caption{Test 1. Robustness plots for $r > 2$.}
  \label{fig:rob.rg2}
\end{figure}

\subsection{Test 2}

We now consider the benchmark test from \cite{Mahmood.Bilal.ea:20}, simulating a ``hunchback'' channel.
The adopted mesh and the domain $\Omega$ are shown in Figure \ref{fig:mesh-r2} (left).
The force term $\boldsymbol{f}$ is set equal to zero and we enforce a parabolic incoming horizontal velocity with maximum magnitude equal to $0.3$ at the inlet, that is, the leftmost vertical edge.
The velocities at the upper and lower walls of the channel are set equal to zero (no slip boundary conditions) while, at the outlet, that is, the rightmost vertical edge, we apply homogeneous Neumann boundary conditions.
Finally, the diffusive parameter is set equal to $\nu=10^{-2}$ and our investigation is focused on different significant values of $r$, ranging from $r=1.5$ to $r=2.5$.

We solve the problem up to the final time $\tF = 10$ (starting from time zero) with 20 discrete time-steps; all the plots below are depicted at the final time.
In Figure \ref{fig:mesh-r2} (right) we show the streamlines obtained for the Newtonian case $r=2$. In Figure \ref{fig:streams} we instead plot the two extreme cases $r=1.5$ (shear thinning fluid) and $r=2.5$ (shear thickening fluid).
We can, in particular, appreciate the different shapes of the cavity vortex because of the different viscosities.

In Figure \ref{fig:cutlines} we present the normal (horizontal) velocity values at different cutlines of the channel and for various values of the model parameter $r$. Higher values of $r$ lead, as expected, to higher peak velocities and slightly more acute profiles.

We finally note that we have also tested the same problem with different (triangular) meshes and no appreaciable change in the graphs was noticed.
\begin{figure}
  \includegraphics[height=3cm]{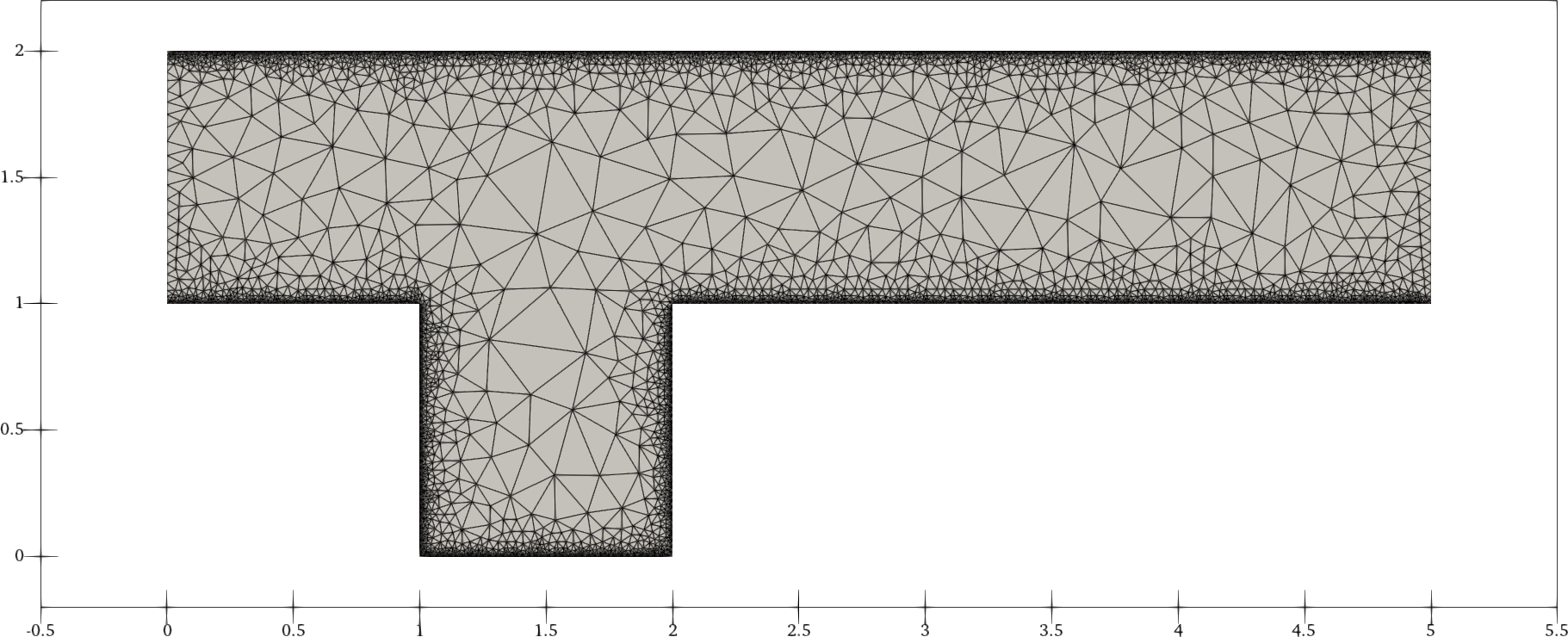}
  \hfill
  \includegraphics[height=3cm]{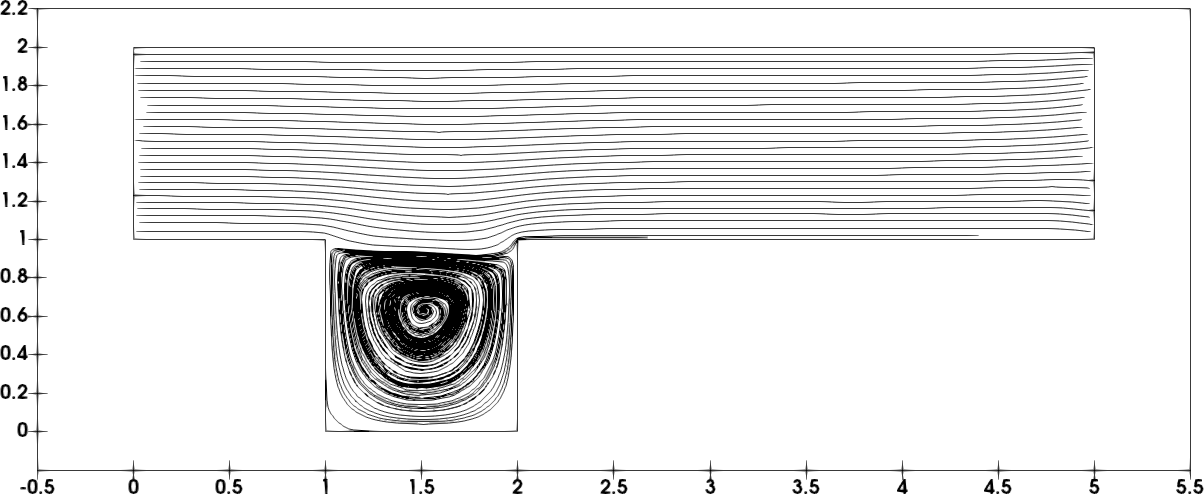}
  \caption{Test 2. Domain with adopted mesh (left) and streamlines at $T=10$ for $r=2$ (right).}
  \label{fig:mesh-r2}
\end{figure}
\begin{figure}
  \centering
  \begin{subfigure}[b]{0.48\textwidth}
    \centering
    \includegraphics[height=3cm]{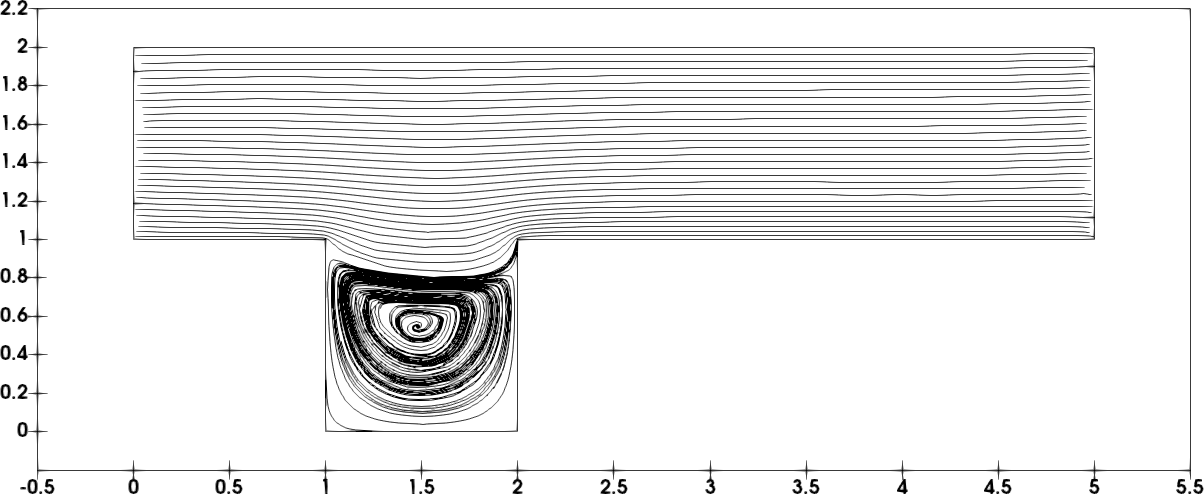}
    \caption{$r=1.5$}
  \end{subfigure}
  \hfill
  \begin{subfigure}[b]{0.48\textwidth}
    \centering
    \includegraphics[height=3cm]{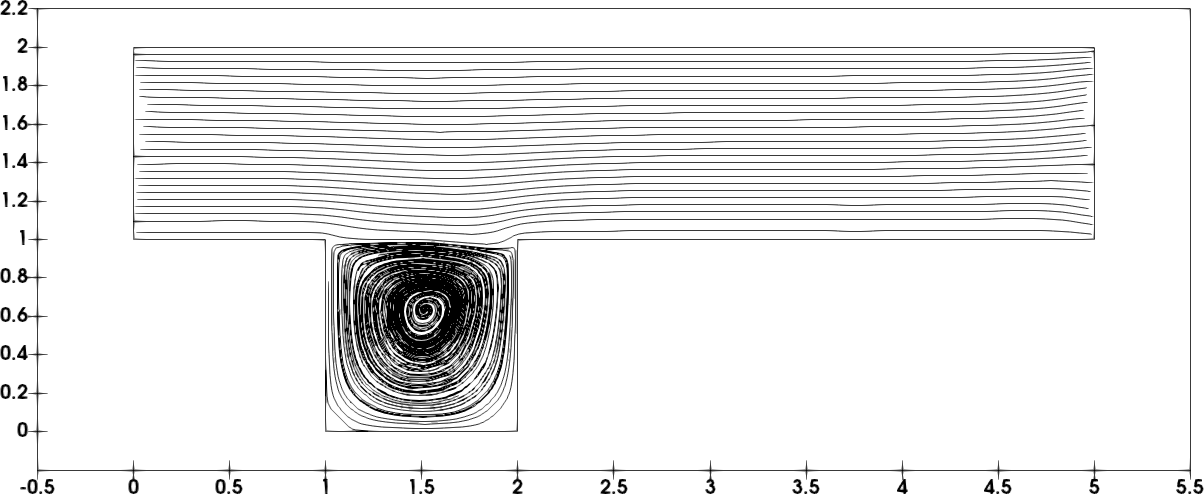}
    \caption{$r=2.5$}
  \end{subfigure}
  \caption{Test 2. Streamlines at $\tF = 10$.}
  \label{fig:streams}
\end{figure}

\begin{figure}
  \centering
  \begin{subfigure}[b]{0.45\textwidth}
    \centering
    \includegraphics[width=\linewidth]{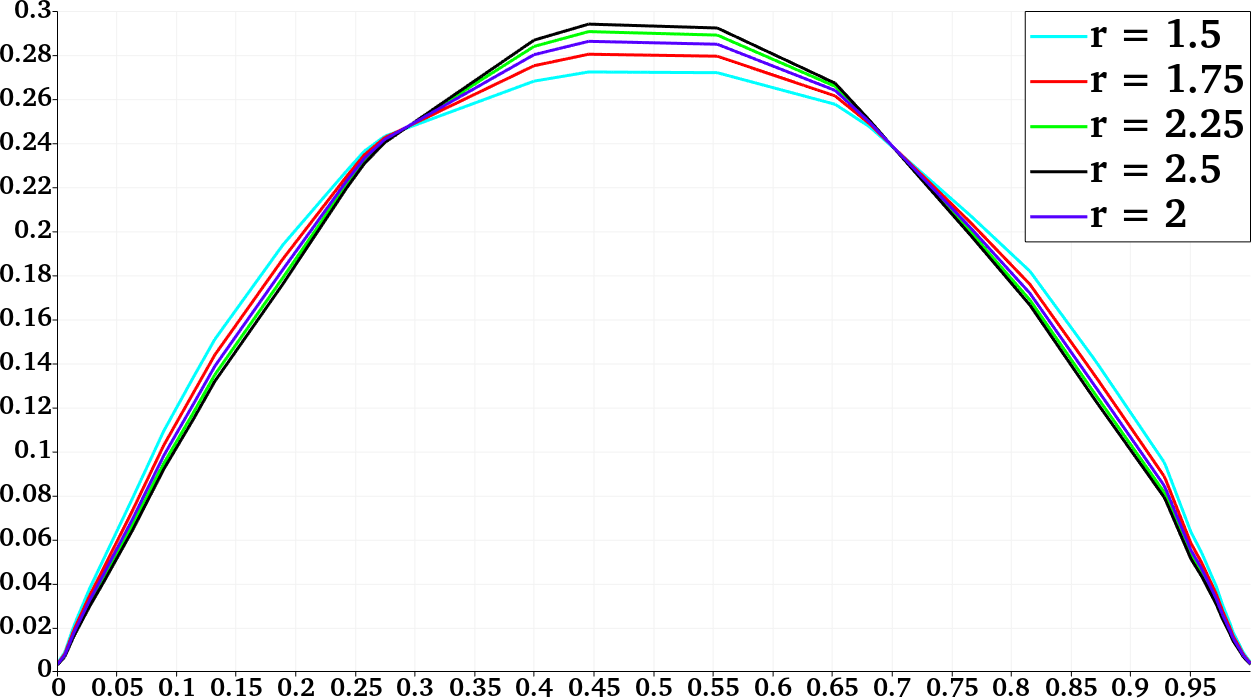}
    \caption{Cutline at $x=0.5$}
  \end{subfigure}
  \hfill
  \begin{subfigure}[b]{0.45\textwidth}
    \centering
    \includegraphics[width=\linewidth]{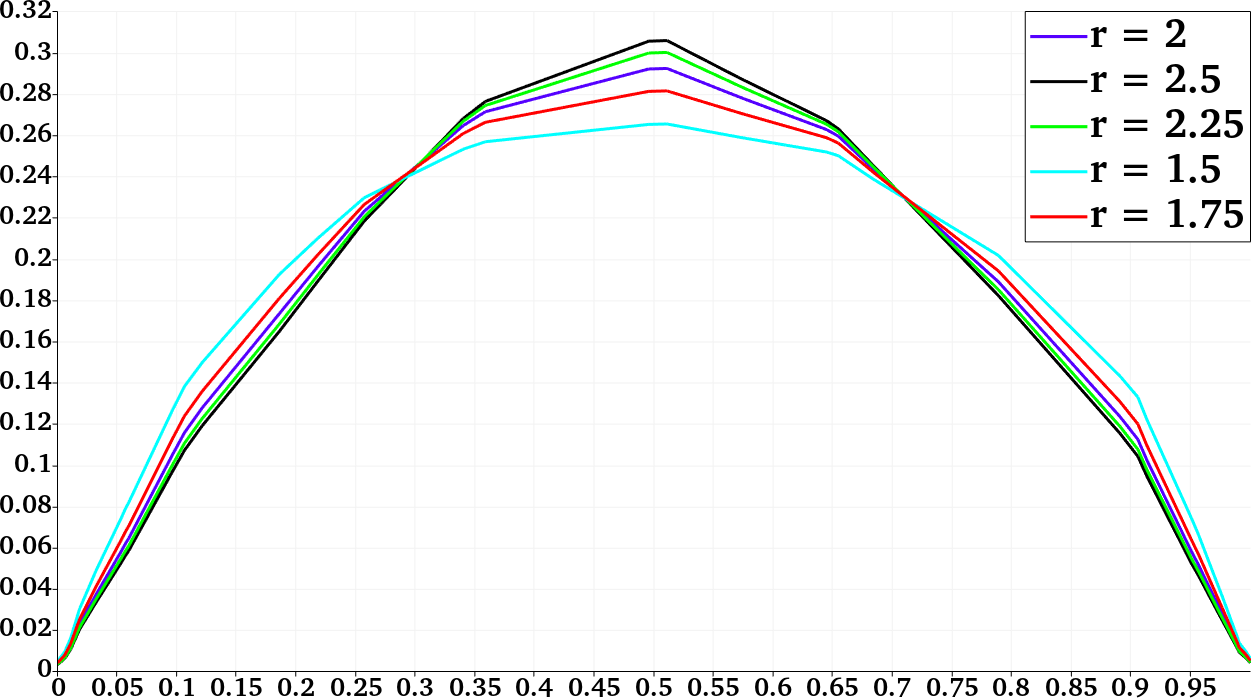}
    \caption{Cutline at $x=2.5$}
  \end{subfigure}

  \vspace{0.5cm}
  \centering
  \begin{subfigure}[b]{0.45\textwidth}
    \centering
    \includegraphics[width=\linewidth]{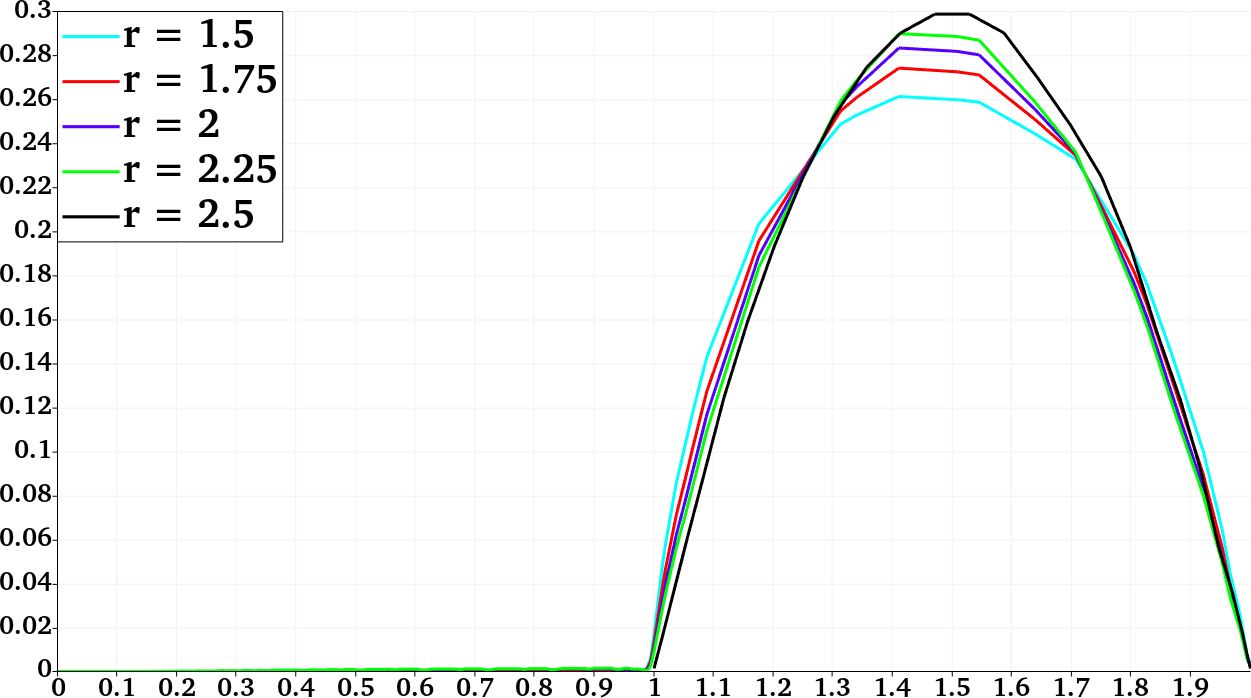}
    \caption{Cutline at $x=1$}
  \end{subfigure}
  \vspace{0.5cm}

  \caption{Test 2. Horizontal velocities at selected cutlines of the channel, T=10.}
  \label{fig:cutlines}
\end{figure}

\section*{Acknowledgements}
All the authors have been partially funded by the European Union (ERC Synergy, NEMESIS, project number 101115663).
Views and opinions expressed are however those of the authors only and do not necessarily reflect those of the European Union or the European Research Council Executive Agency.

\printbibliography

\end{document}